\providecommand{\U}[1]{\protect\rule{.1in}{.1in}}
\newtheorem{theorem}{Theorem}
\numberwithin{theorem}{subsection}
\newtheorem{corollary}[theorem]{Corollary}
\newtheorem{definition}[theorem]{Definition}
\newtheorem{lemma}[theorem]{Lemma}
\newtheorem{proposition}[theorem]{Proposition}
\newtheorem{remark}[theorem]{Remark}
\newenvironment{proof}[1][Proof]{\noindent\textbf{#1.} }{\ \rule{0.5em}{0.5em}}
\begin{document}

\title{Fractional Sobolev inequalities associated with singular problems}
\author{{\small \textbf{Grey Ercole\thanks{Email: grey@mat.ufmg.br (corresponding
author)} \ and Gilberto de Assis Pereira \thanks{Email:
gilbertoapereira@yahoo.com.br}}}\\{\small \textit{Departamento de Matem\'{a}tica - Universidade Federal de Minas
Gerais}}\\{\small \textit{Belo Horizonte, MG, 30.123-970, Brazil.}}}
\maketitle

\begin{abstract}
\noindent In this paper we consider Sobolev inequalities associated with
singular problems for the fractional $p$-Laplacian operator in a bounded
domain of $\mathbb{R}^{N}$, $N\geq2.\bigskip$

\noindent\textbf{2010 Mathematics Subject Classification.} Primary 35A23;
35R11; Secondary 35A15.

\noindent\textbf{Keywords:}{\small {\ Best constants, fractional
$p$-Laplacian}, singular problem, Sobolev inequalities.}

\end{abstract}

\section{Introduction}

Let $\Omega$ be a bounded, smooth domain of $\mathbb{R}^{N}$ ($N\geq2$) and,
for $0<s<1<p<\infty,$ let $W_{0}^{s,p}(\Omega)$ denote the fractional Sobolev
space defined as the completion of $C_{c}^{\infty}(\Omega)$ with respect to
the norm%
\begin{equation}
u\mapsto\left(  \left[  u\right]  _{s,p}^{p}+\left\Vert u\right\Vert _{p}%
^{p}\right)  ^{\frac{1}{p}}, \label{norma2}%
\end{equation}
where%
\begin{equation}
\left[  u\right]  _{s,p}:=\left(  \int\int_{\mathbb{R}^{2N}}\frac{\left\vert
u(x)-u(y)\right\vert ^{p}}{\left\vert x-y\right\vert ^{N+sp}}\mathrm{d}%
x\mathrm{d}y\right)  ^{\frac{1}{p}} \label{Gag}%
\end{equation}
is the Gagliardo semi-norm and $\left\Vert \cdot\right\Vert _{r}$ denotes the
standard norm of $L^{r}(\Omega),$ $1\leq r\leq\infty$ (a notation that will be
used in the whole paper).

Thanks to the fractional Poincar\'{e} inequality (see \cite[Lemma 2.4]{BLP}),
\begin{equation}
\left\Vert u\right\Vert _{p}^{p}\leq C_{N,s,p,\Omega}\left[  u\right]
_{s,p}^{p},\quad\forall\,u\in C_{c}^{\infty}(\Omega), \label{poincare}%
\end{equation}
$\left[  \cdot\right]  _{s,p}$ is a norm in $W_{0}^{s,p}(\Omega)$ equivalent
to (\ref{norma2}). Thus,
\[
W_{0}^{s,p}(\Omega)=\left\{  u\in L^{p}(\mathbb{R}^{N}):\left[  u\right]
_{s,p}<\infty\quad\mathrm{and}\quad u=0\quad\mathrm{in}\quad\mathbb{R}%
^{N}\setminus\Omega\right\}
\]
equipped with the norm $\left[  \cdot\right]  _{s,p}$ is a Banach space.
Moreover, $W_{0}^{s,p}(\Omega)$ is uniformly convex and compactly embedded
into $L^{r}(\Omega),$ for all
\[
1\leq r<p_{s}^{\star}:=\left\{
\begin{array}
[c]{lcl}%
\frac{Np}{N-sp} & \mathrm{if} & N>sp\\
\infty & \mathrm{if} & N\leq sp,
\end{array}
\right.
\]
continuously embedded into $L^{p_{s}^{\star}}(\Omega)$ when $N>sp,$ and
compactly embedded into the H\"{o}lder space $C^{s-\frac{N}{p}}(\Omega)$ when
$N<sp$ (see \cite[Lemma 2.9]{BLP}). We refer the reader to \cite{Guide} for a
self-contained exposition on the fractional Sobolev spaces.

In this paper we will consider the Sobolev inequalities associated with the
fractional, singular problem%
\begin{equation}
\left\{
\begin{array}
[c]{lcl}%
(-\Delta_{p})^{s}\,u=\dfrac{\omega}{u^{\alpha}} & \mathrm{in} & \Omega\\
u>0 & \mathrm{in} & \Omega\\
u=0 & \mathrm{on} & \mathbb{R}^{N}\setminus\Omega,
\end{array}
\right.  \label{palfa}%
\end{equation}
where $0<\alpha\leq1,$ $\omega$ is a nonnegative (weight) function in
$L^{r}(\Omega)\setminus\left\{  0\right\}  ,$ for some $r\geq1,$ and
$(-\Delta_{p})^{s}$ denotes the fractional $p$-Laplacian, formally defined by
\[
(-\Delta_{p})^{s}\,u(x)=\lim_{\epsilon\rightarrow0^{+}}\int_{\mathbb{R}%
^{N}\setminus B_{\epsilon}(x)}\frac{|u(x)-u(y)|^{p-2}(u(x)-u(y))}%
{|x-y|^{N+sp}}\mathrm{d}y.
\]

In the case $0<\alpha<1$ the Sobolev inequality associated with (\ref{palfa})
takes the form
\begin{equation}
C\left(
{\displaystyle\int_{\Omega}}
\left\vert v\right\vert ^{1-\alpha}\omega\mathrm{d}x\right)  ^{\frac
{p}{1-\alpha}}\leq\left[  v\right]  _{s,p}^{p},\quad\forall\,v\in W_{0}%
^{s,p}(\Omega). \label{Sobolev1}%
\end{equation}

We will prove that the best (\textrm{i.e.} the larger) constant $C$ in
(\ref{Sobolev1}) is
\[
\lambda_{\alpha}=\left[  u_{\alpha}\right]  _{s,p}^{p(\frac{1-\alpha
-p}{1-\alpha})},
\]
where $u_{\alpha}$ denotes the only weak solution of (\ref{palfa}). We also
will show that
\[
\lambda_{\alpha}\left(
{\displaystyle\int_{\Omega}}
\left\vert v\right\vert ^{1-\alpha}\omega\mathrm{d}x\right)  ^{\frac
{p}{1-\alpha}}=\left[  v\right]  _{s,p}^{p}%
\]
if, and only if, $v$ is a scalar multiple of $u_{\alpha}.$

By means of a limit procedure (when $\alpha\rightarrow1^{-}$) we will deduce
the following Sobolev inequality%
\begin{equation}
C\exp\left(  \tfrac{p}{\left\Vert \omega\right\Vert _{1}}%
{\displaystyle\int_{\Omega}}
(\log\left\vert v\right\vert )\omega\mathrm{d}x\right)  \leq\left[  v\right]
_{s,p}^{p},\quad\forall\,v\in W_{0}^{s,p}(\Omega). \label{Sobolev2}%
\end{equation}
Moreover, we will prove that the best constant $C$ in this inequality is
\begin{equation}
\mu:=\lim_{\alpha\rightarrow1^{-}}\lambda_{a}\left\Vert \omega\right\Vert
_{1}^{\frac{p}{1-\alpha}}, \label{mialfa}%
\end{equation}
provided that it is finite, and that
\[
\mu\exp\left(  \tfrac{p}{\left\Vert \omega\right\Vert _{1}}%
{\displaystyle\int_{\Omega}}
(\log\left\vert v\right\vert )\omega\mathrm{d}x\right)  =\left[  v\right]
_{s,p}^{p}%
\]
if, and only if, $v$ is a scalar multiple of the only weak solution of the
singular problem
\[
\left\{
\begin{array}
[c]{lcl}%
(-\Delta_{p})^{s}\,u=\dfrac{\mu}{\left\Vert \omega\right\Vert _{1}}%
\dfrac{\omega}{u} & \mathrm{in} & \Omega\\
u>0 & \mathrm{in} & \Omega\\
u=0 & \mathrm{on} & \mathbb{R}^{N}\setminus\Omega.
\end{array}
\right.
\]

Our approach here is based on that developed in \cite{EP}, where we have
considered the local, singular equation $-\operatorname{div}\left(  \left\vert
\nabla u\right\vert ^{p-2}\nabla u\right)  =u^{-1}.$ Here, besides the
technical difficulties related to the nonlocal operator, we also have to deal
with a non-constant weight $\omega\in L^{r}(\Omega).$

The literature on singular problems for equations of the form $\mathcal{L}%
u=\omega u^{-\alpha}$ has primarily focused on local operators as the
Laplacian, $\mathcal{L}u=-\operatorname{div}\nabla u$ (see \cite{AM,
Bocorsina, CD, CRT, LS, LM, Stuart}), or the $p$-Laplacian, $\mathcal{L}%
u=-\operatorname{div}\left(  \left\vert \nabla u\right\vert ^{p-2}\nabla
u\right)  ,$ $p>1$ (see \cite{Anello, CST, EP, GST, GST1, Mohammed}).

As regarding to nonlocal (fractional) operators, the literature on singular
problems is quite recent and more restricted to $\mathcal{L}u=(-\Delta
_{p})^{s}\,u$ (see \cite{BBMP, CMS}). Furthermore, according to our knowledge,
Sobolev-type inequalities associated with fractional singular problems have
not been investigated up to now.

In general, the energy functional associated with a singular problem of the
form $\mathcal{L}u=\omega u^{-\alpha}$ is not differentiable. This fact makes
very difficult the direct application of variational methods for proving
existence of solutions for this kind of problem. In order to overcome this
issue (in the cases where $\mathcal{L}$ is a local operator), authors have
employed the sub-super solutions method (see \cite{CD, LM, Mohammed}) or a
method of approximation by nonsigular problems introduced in \cite{Bocorsina}
by Boccardo and Orsina (see \cite{AM, CST}). Recently, in \cite{CMS}, the
latter method was applied to (\ref{palfa}) in order to obtain the existence of
a weak solution, in the case $0<\alpha\leq1,$ and also the existence of a
solution in $W_{loc}^{1,s}(\Omega),$ in the case $\alpha>1.$ We remark that
singular problems for equations of the form $\mathcal{L}u=\omega u^{-\alpha}$
might not have weak solutions (in the standard sense) when $\alpha>1$ and
$\omega$ is a general positive weight (see \cite{LM}). This fact is related to
the singularity of the problem when the support of $\omega$ intercepts the
boundary $\partial\Omega.$ In fact, if $\alpha>1$ and the support of $\omega$
is contained in a proper subdomain of $\Omega,$ the singular
problem\ (\ref{palfa}) has a unique weak solution (see Remark \ref{alfa>1}).

In order to make this paper self-contained we will present, in Section
\ref{Sec2}, results of existence, uniqueness and boundedness (in $L^{\infty}$)
for the singular problem (\ref{palfa}). The existence will be proved by
applying the approximation method by Boccardo and Orsina, which consists in
finding a solution as the limit of the sequence $\left\{  u_{n}\right\}
_{n\in\mathbb{N}}\subset W_{0}^{s,p}(\Omega)$ satisfying
\[
\left\{
\begin{array}
[c]{lcl}%
(-\Delta_{p})^{s}\,u_{n}=\dfrac{\omega_{n}}{(u_{n}+\frac{1}{n})^{\alpha}} &
\mathrm{in} & \Omega\\
u_{n}>0 & \mathrm{in} & \Omega\\
u_{n}=0 & \mathrm{on} & \mathbb{R}^{N}\setminus\Omega.
\end{array}
\right.
\]

Many of the results presented in Section \ref{Sec2} are contained in
\cite{BBMP} (for $p=2$) and \cite{CMS} (for $p>1$), but we will contribute
with some additional information. For example, we will prove that $\left[
u_{n}\right]  _{s,p}\leq\left[  u_{n+1}\right]  _{s,p}$ for all $n\in
\mathbb{N}.$ This property makes simpler the proof that $u_{n}$ converges
strongly to a solution of (\ref{palfa}) when $\left\{  u_{n}\right\}
_{n\in\mathbb{N}}$ is bounded in $W_{0}^{s,p}(\Omega).$ It also holds true for
the local version of the problem.

Our main results, related to the Sobolev inequalities (\ref{Sobolev1}) and
(\ref{Sobolev2}), will be proved in the Sections \ref{Sec3} and \ref{Sec4}, respectively.

\section{The fractional singular problem \label{Sec2}}

In this section we will provide a framework for the fractional singular
problem (\ref{palfa}). First, we will present results of uniqueness and
boundedness for the singular problem (\ref{palfa}). In the sequence we will
study a family of nonsingular problems whose solutions approach the solution
of (\ref{palfa}) when it exists. At last, we will present a result of
existence for (\ref{palfa}) in the case $0<\alpha\leq1.$

\subsection{Preliminaries}

Let us first fix the notation that will be used in the whole paper.

The duality pairing corresponding to the fractional $p$-Laplacian is defined
as
\begin{equation}
\left\langle (-\Delta_{p})^{s}\,u,v\right\rangle :=\int\int_{\mathbb{R}^{2N}%
}\frac{\left\vert u(x)-u(y)\right\vert ^{p-2}(u(x)-u(y))(v(x)-v(y))}%
{\left\vert x-y\right\vert ^{N+sp}}\mathrm{d}x\mathrm{d}y, \label{weak}%
\end{equation}
where $u,v\in W_{0}^{s,p}(\Omega).$ For the sake of clarity we will use the
following notation%
\begin{equation}
\widetilde{v}(x,y)=v(x)-v(y), \label{til}%
\end{equation}
which allows us to write
\[
\left\langle (-\Delta_{p})^{s}\,u,v\right\rangle =\int\int_{\mathbb{R}^{2N}%
}\frac{\left\vert \widetilde{u}(x,y)\right\vert ^{p-2}\widetilde
{u}(x,y)\widetilde{v}(x,y)}{\left\vert x-y\right\vert ^{N+sp}}\mathrm{d}%
x\mathrm{d}y
\]
and
\[
\left\langle (-\Delta_{p})^{s}\,u_{2}-(-\Delta_{p})^{s}\,u_{1},u_{2}%
-u_{1}\right\rangle =\int\int_{\mathbb{R}^{2N}}\frac{\left\vert \widetilde
{u_{2}}\right\vert ^{p-2}\widetilde{u_{2}}-\left\vert \widetilde{u_{1}%
}\right\vert ^{p-2}\widetilde{u_{1}}}{\left\vert x-y\right\vert ^{N+sp}%
}(\widetilde{u_{2}}-\widetilde{u_{1}})\mathrm{d}x\mathrm{d}y.
\]

We will adopt the standard notations $v_{+}$ and $r^{\prime}$ for,
respectively, the positive part of a function $v$ and the H\"{o}lder conjugate
of a number $r>1.$ Thus,
\[
v_{+}:=\max\left\{  v,0\right\}  \quad\mathrm{and}\quad r^{\prime}:=\frac
{r}{r-1}.
\]

\begin{remark}
\label{Sign}If a function $u\in W_{0}^{s,p}(\Omega)$ changes sign in $\Omega$
then $\left[  \left\vert u\right\vert \right]  _{s,p}^{p}<\left[  u\right]
_{s,p}^{p}.$ This stems from the following fact%
\[
\left\vert \left\vert u(x)\right\vert -\left\vert u(y)\right\vert \right\vert
<\left\vert u(x)-u(y)\right\vert \quad\mathrm{whenever}\quad u(x)u(y)<0.
\]

\end{remark}

The symbol $S_{\theta}$ will denote, for each $\theta\in\lbrack1,p_{s}^{\star
}),$ a positive constant satisfying
\begin{equation}
\left\Vert u\right\Vert _{\theta}^{p}\leq S_{\theta}\left[  u\right]
_{s,p}^{p},\quad\forall\,u\in W_{0}^{s,p}(\Omega). \label{Sq}%
\end{equation}
The existence of such a constant comes from the continuity of the embedding
$W_{0}^{s,p}(\Omega)\hookrightarrow L^{\theta}(\Omega).$ Accordingly, the
symbol $S_{p_{s}^{\star}}$ will be used to denote the constant relative to the
combined case $r=p_{s}^{\star}$ and $N>sp,$ since the embedding $W_{0}%
^{s,p}(\Omega)\hookrightarrow L^{p_{s}^{\star}}(\Omega)$ is also continuous in
this case.

\begin{definition}
\label{weaksol}We say that $u\in W_{0}^{s,p}(\Omega)$ is a weak solution of
the singular, fractional Dirichlet problem (\ref{palfa}), with $\alpha>0,$ if
the following conditions are satisfied:

\begin{enumerate}
\item[(i)] for each subdomain $\Omega^{\prime}$ compactly contained in
$\Omega$ there exists a positive constant $C_{\Omega^{\prime}}$ such that
\[
u\geq C_{\Omega^{\prime}}\quad\mathrm{a.e.}\text{ }\mathrm{in}\text{ }%
\Omega^{\prime}%
\]

\item[(ii)] for each $\varphi\in C_{c}^{\infty}(\Omega),$ one has
\end{enumerate}
\end{definition}

\begin{equation}
\left\langle (-\Delta_{p})^{s}\,u,\varphi\right\rangle =%
{\displaystyle\int_{\Omega}}
\frac{\omega\varphi}{u^{\alpha}}\mathrm{d}x. \label{distr}%
\end{equation}

Condition \textit{(i)}\textrm{ }arises from the singular nature of
(\ref{palfa}) and guarantees that the right-hand term of (\ref{distr}) is well
defined. The following proposition shows that the distributional formulation
\textit{(ii)} leads to the traditional notion of weak solution, according to
which the set of testing functions is taken to be $W_{0}^{s,p}(\Omega).$

\begin{proposition}
\label{wtest}Let $u\in W_{0}^{s,p}(\Omega)$ be a weak solution as defined
above. Then
\[
\left\langle (-\Delta_{p})^{s}\,u,\varphi\right\rangle =%
{\displaystyle\int_{\Omega}}
\frac{\omega\varphi}{u^{\alpha}}\mathrm{d}x,\quad\forall\,\varphi\in
W_{0}^{s,p}(\Omega).
\]

\end{proposition}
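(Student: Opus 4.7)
The plan is a two-stage density argument. The distributional identity (\ref{distr}) is given for $\varphi \in C_c^\infty(\Omega)$; I will extend it first to $W_0^{s,p}(\Omega)$-functions with compact support in $\Omega$, and then from these to all of $W_0^{s,p}(\Omega)$. Both sides are linear in $\varphi$, and the map $\varphi \mapsto \varphi_\pm$ keeps us inside $W_0^{s,p}(\Omega)$ by Remark \ref{Sign}, so the identity splits according to $\varphi = \varphi_+ - \varphi_-$ and the proof reduces to the case $\varphi \geq 0$, which I assume from now on.

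\emph{Stage 1 (compactly supported test functions).} Choose $\{\eta_k\} \subset C_c^\infty(\Omega)$ with $0 \leq \eta_k \leq 1$ and $\eta_k \nearrow 1$ pointwise. Then $\eta_k\varphi \in W_0^{s,p}(\Omega)$ is supported in a compact $K_k \Subset \Omega$, and by condition (i) of Definition \ref{weaksol} there is $c_k > 0$ with $u \geq c_k$ on $K_k$. Mollification of $\eta_k\varphi$ produces $\psi_{k,m} \in C_c^\infty(\Omega)$ with $\mathrm{supp}\,\psi_{k,m}$ contained in a fixed compact neighborhood of $K_k$ and $\psi_{k,m} \to \eta_k\varphi$ in $W_0^{s,p}(\Omega)$ as $m\to\infty$. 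Applying the hypothesis to $\psi_{k,m}$ and letting $m\to\infty$, the left side converges by the Hölder-type bound $|\langle (-\Delta_p)^s u, v\rangle| \leq [u]_{s,p}^{p-1}\,[v]_{s,p}$ built into (\ref{weak}); the right side converges by dominated convergence, since on the uniform support the integrand is controlled by $c_k^{-\alpha}|\omega|\,|\psi_{k,m}|$ and Hölder together with the Sobolev embedding (or a preliminary $L^\infty$-truncation of $\varphi$ in the borderline range of $r$) gives $L^1$-convergence of $\omega\psi_{k,m}$ to $\omega\eta_k\varphi$. This yields
\[
\langle (-\Delta_p)^s u, \eta_k\varphi\rangle = \int_\Omega \frac{\omega(\eta_k\varphi)}{u^\alpha}\,\mathrm{d}x.
\]

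\emph{Stage 2 (passage $k \to \infty$).} Since $\omega, \varphi \geq 0$ and $\eta_k \nearrow 1$ pointwise, monotone convergence gives
\[
\int_\Omega \frac{\omega(\eta_k\varphi)}{u^\alpha}\,\mathrm{d}x \nearrow \int_\Omega \frac{\omega\varphi}{u^\alpha}\,\mathrm{d}x,
\]
so this integral is finite. To match this with the left-hand side, the step I expect to be the main obstacle is to show $\eta_k\varphi \to \varphi$ in $W_0^{s,p}(\Omega)$, since the Gagliardo seminorm does not obey a Leibniz rule and $[(1-\eta_k)\varphi]_{s,p}^p$ must be estimated directly. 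I would split the double integral into a contribution where $|\eta_k(x)-\eta_k(y)|$ is absorbed by a factor $|x-y|^s$ (via a careful choice of $\eta_k$ depending on $\mathrm{dist}(\cdot,\partial\Omega)$, with controlled Lipschitz scale $1/k$) and a contribution where the factor $(1-\eta_k)$ vanishes pointwise, both going to zero by dominated convergence against the integrable density $|\tilde\varphi(x,y)|^p/|x-y|^{N+sp}$. Once this is done, continuity of the functional $v \mapsto \langle (-\Delta_p)^s u, v\rangle$ on $W_0^{s,p}(\Omega)$ identifies the limit of the left-hand side with $\langle (-\Delta_p)^s u, \varphi\rangle$, completing the proof.
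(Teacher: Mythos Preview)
Your two--stage density strategy can be made to work, but the sketch of Stage~2 has a real gap, and the paper's argument sidesteps your ``main obstacle'' entirely with a much shorter idea.

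\textbf{The gap.} In estimating $[(1-\eta_k)\varphi]_{s,p}^p$ you split
\[
(1-\eta_k(x))\varphi(x)-(1-\eta_k(y))\varphi(y)
=(1-\eta_k(x))\,\widetilde{\varphi}(x,y)-(\eta_k(x)-\eta_k(y))\,\varphi(y),
\]
and claim both pieces are controlled by dominated convergence against $|\widetilde{\varphi}(x,y)|^p/|x-y|^{N+sp}$. This is fine for the first piece, but the second piece contributes
\[
\int\!\!\int \frac{|\eta_k(x)-\eta_k(y)|^p\,|\varphi(y)|^p}{|x-y|^{N+sp}}\,\mathrm{d}x\,\mathrm{d}y,
\]
which is \emph{not} dominated by the $\widetilde{\varphi}$--density. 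If your cutoffs have Lipschitz constant $L_k\sim k$, the integrand is bounded by $\min(1,L_k|x-y|)^p|\varphi(y)|^p/|x-y|^{N+sp}$, whose $x$--integral scales like $L_k^{sp}$ and blows up. Controlling this term genuinely requires something like a fractional Hardy inequality (using that $\varphi$ is small, in an integral sense, near $\partial\Omega$), which is a substantial additional ingredient you have not invoked.

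\textbf{How the paper avoids this.} Recall that $W_0^{s,p}(\Omega)$ is \emph{defined} as the closure of $C_c^\infty(\Omega)$, so an approximating sequence $\varphi_n\to\varphi$ in $W_0^{s,p}(\Omega)$ with $\varphi_n\in C_c^\infty(\Omega)$ exists for free; your cutoff construction is unnecessary. The only issue is convergence of the right--hand side, and here the paper establishes the a~priori bound
\[
\left|\int_\Omega \frac{\omega v}{u^\alpha}\,\mathrm{d}x\right|\le [u]_{s,p}^{p-1}\,[v]_{s,p},
\qquad \forall\,v\in W_0^{s,p}(\Omega),
\]
by taking nonnegative $\xi_n\in C_c^\infty(\Omega)$ with $\xi_n\to|v|$ in $W_0^{s,p}(\Omega)$ and pointwise a.e., and applying Fatou's lemma together with $\int_\Omega \frac{\omega\xi_n}{u^\alpha}=\langle(-\Delta_p)^s u,\xi_n\rangle\le[u]_{s,p}^{p-1}[\xi_n]_{s,p}$. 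Once this bound is in hand, the right--hand side is a bounded linear functional on $W_0^{s,p}(\Omega)$, and density gives the result in two lines. This completely bypasses any need to prove $\eta_k\varphi\to\varphi$ in the Gagliardo seminorm.
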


\begin{proof}
First we show, by using Fatou's Lemma and H\"{o}lder inequality, that
\begin{equation}
\left\vert
{\displaystyle\int_{\Omega}}
\dfrac{\omega v}{u^{\alpha}}\mathrm{d}x\right\vert \leq\left[  u\right]
_{s,p}^{p-1}\left[  v\right]  _{s,p},\quad\forall\,v\in W_{0}^{s,p}(\Omega).
\label{aux11}%
\end{equation}

Let $v$ be an arbitrary function in $W_{0}^{s,p}(\Omega)$ and take $\left\{
\xi_{n}\right\}  _{n\in\mathbb{N}}\subset C_{c}^{\infty}(\Omega)$ such that
$0\leq\xi_{n}\rightarrow\left\vert v\right\vert $ in $W_{0}^{s,p}(\Omega)$ and
also pointwise almost everywhere. Thus,
\begin{align*}
\left\vert
{\displaystyle\int_{\Omega}}
\dfrac{\omega v}{u^{\alpha}}\mathrm{d}x\right\vert  &  \leq%
{\displaystyle\int_{\Omega}}
\dfrac{\omega\left\vert v\right\vert }{u^{\alpha}}\mathrm{d}x\\
&  \leq\liminf_{n\rightarrow\infty}%
{\displaystyle\int_{\Omega}}
\dfrac{\omega\xi_{n}}{u^{\alpha}}\mathrm{d}x\\
&  =\liminf_{n\rightarrow\infty}\left\langle (-\Delta_{p})^{s}\,u,\xi
_{n}\right\rangle \\
&  \leq\left[  u\right]  _{s,p}^{p-1}\lim_{n\rightarrow\infty}\left[  \xi
_{n}\right]  _{s,p}=\left[  u\right]  _{s,p}^{p-1}\left[  \left\vert
v\right\vert \right]  _{s,p}\leq\left[  u\right]  _{s,p}^{p-1}\left[
v\right]  _{s,p}.
\end{align*}

Now, we fix $\varphi\in W_{0}^{s,p}(\Omega)$ and $\left\{  \varphi
_{n}\right\}  _{n\in\mathbb{N}}\subset C_{c}^{\infty}(\Omega)$ such that
$\varphi_{n}\rightarrow\varphi$ in $W_{0}^{s,p}(\Omega).$ Then, by taking
$v=\varphi_{n}-\varphi$ in (\ref{aux11}) we obtain%
\[
\lim_{n\rightarrow\infty}\left\vert
{\displaystyle\int_{\Omega}}
\dfrac{\omega\left(  \varphi_{n}-\varphi\right)  }{u^{\alpha}}\mathrm{d}%
x\right\vert \leq\lim_{n\rightarrow\infty}\left[  u\right]  _{s,p}%
^{p-1}\left[  \varphi_{n}-\varphi\right]  _{s,p}=0,
\]
that is,%
\[
\lim_{n\rightarrow\infty}%
{\displaystyle\int_{\Omega}}
\dfrac{\omega\varphi_{n}}{u^{\alpha}}\mathrm{d}x=%
{\displaystyle\int_{\Omega}}
\dfrac{\omega\varphi}{u^{\alpha}}\mathrm{d}x.
\]

Combining this fact with the strong convergence $\varphi_{n}\rightarrow
\varphi$ we can make $n\rightarrow\infty$ in the inequality
\[
\left\langle (-\Delta_{p})^{s}\,u,\varphi_{n}\right\rangle =%
{\displaystyle\int_{\Omega}}
\frac{\omega\varphi_{n}}{u^{\alpha}}\mathrm{d}x
\]
(recall that $u$ is a distributional solution), in order to obtain%
\[
\left\langle (-\Delta_{p})^{s}\,u,\varphi\right\rangle =%
{\displaystyle\int_{\Omega}}
\frac{\omega\varphi}{u^{\alpha}}\mathrm{d}x.
\]

\end{proof}

\subsection{Uniqueness}

The following lemma is well-known.

\begin{lemma}
\label{proputil} Let $p>1$ and $X,Y\in\mathbb{R}^{N}\setminus\{0\},$ $N\geq1.$
There exist positive constants $c_{p}$ and $C_{p},$ depending only on $p,$
such that
\end{lemma}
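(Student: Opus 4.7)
The statement is the standard family of vector inequalities for the map $\Phi(Z):=|Z|^{p-2}Z$ associated with the $p$-Laplacian. The plan is to follow the classical approach of Simon. By the $p$-homogeneity of such inequalities in the pair $(X,Y)$, one may reduce to the case $|X|+|Y|=1$ and, by symmetry, $|X|\geq|Y|$.

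The natural starting point is the mean-value representation
\[
\Phi(X)-\Phi(Y)=\int_{0}^{1}D\Phi(Z(t))(X-Y)\,dt,\qquad Z(t):=(1-t)Y+tX,
\]
together with the explicit computation $D\Phi(Z)=|Z|^{p-2}\bigl(I+(p-2)\hat{Z}\otimes\hat{Z}\bigr)$, whose eigenvalues are $|Z|^{p-2}$ (with multiplicity $N-1$) and $(p-1)|Z|^{p-2}$. The upper bound on $|\Phi(X)-\Phi(Y)|$ would then follow by controlling the operator norm of $D\Phi(Z(t))$, while the monotonicity-type lower bound on $(\Phi(X)-\Phi(Y))\cdot(X-Y)$ would follow by controlling its smallest eigenvalue and integrating.

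I would then treat the regimes $p\geq2$ and $1<p<2$ separately, since the extremal eigenvalues of $D\Phi$ swap roles at $p=2$. For $1<p<2$, the lower eigenvalue is $(p-1)|Z|^{p-2}$, and since $|Z(t)|\leq|X|+|Y|$ one has $|Z(t)|^{p-2}\geq(|X|+|Y|)^{p-2}$ pointwise, which integrates directly to a bound of the form $c_{p}(|X|+|Y|)^{p-2}|X-Y|^{2}$. For $p\geq2$ and the monotonicity bound $c_{p}|X-Y|^{p}$, the mean-value route is too crude because the factor $|Z(t)|^{p-2}$ may collapse on the segment; here I would instead expand $t\mapsto|(1-t)X+tY|^{p}$ and use its strict convexity to extract the factor $|X-Y|^{p}$.

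The main obstacle is the same non-uniformity at the origin appearing in the upper bound when $1<p<2$, where the integrand $|Z(t)|^{p-2}$ may blow up. This I would handle separately by observing that $Z(t_{0})=0$ forces $X$ and $Y$ to be antiparallel, so that $|X-Y|=|X|+|Y|$, and then controlling $|\Phi(X)-\Phi(Y)|$ crudely by $|X|^{p-1}+|Y|^{p-1}\leq 2(|X|+|Y|)^{p-1}$, which already matches the desired right-hand side. In short, the elegant integral representation does most of the work, but must be supplemented by either a convexity argument or a dedicated antiparallel-case analysis at the endpoints of each range of $p$, which is where the constants $c_{p}$ and $C_{p}$ ultimately come from.
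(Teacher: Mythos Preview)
The paper does not actually prove this lemma: it is introduced with the sentence ``The following lemma is well-known'' and no argument is given. So there is no proof in the paper for your proposal to be compared against.

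Your outline is the standard Simon-type argument and is sound in its broad strokes: the mean-value representation with $D\Phi(Z)=|Z|^{p-2}\bigl(I+(p-2)\hat Z\otimes\hat Z\bigr)$ handles cleanly the $p\geq 2$ upper bound and the $1<p<2$ lower bound, exactly as you describe. The two places where your sketch is thinnest are precisely the ones you flag. For the $1<p<2$ upper bound, the antiparallel case is fine as written, but in the generic (non-antiparallel) case you still need to turn $\int_0^1|Z(t)|^{p-2}\,dt\cdot|X-Y|$ into $c_p|X-Y|^{p-1}$; the usual way is a further dichotomy (e.g.\ $|X-Y|\geq|X|/2$ versus $|X-Y|<|X|/2$ after normalizing $|X|\geq|Y|$), which you do not spell out. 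For the $p\geq 2$ lower bound, ``strict convexity of $t\mapsto|(1-t)X+tY|^p$'' is the right keyword but is not by itself a proof that the constant in front of $|X-Y|^p$ is uniform; one typically either invokes a Clarkson-type inequality or shows directly that $\int_0^1|Z(t)|^{p-2}\,dt\geq c_p|X-Y|^{p-2}$. These are routine completions rather than gaps in the strategy, so your plan would go through once those two case analyses are written out.
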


\begin{equation}
\left\vert \left\vert X\right\vert ^{p-2}X-\left\vert Y\right\vert
^{p-2}Y\right\vert \leq c_{p}\left\{
\begin{array}
[c]{ccl}%
\left\vert X-Y\right\vert ^{p-1} & \mathrm{if} & 1<p<2\\
(\left\vert X\right\vert +\left\vert Y\right\vert )^{p-2}\left\vert
X-Y\right\vert  & \mathrm{if} & p\geq2
\end{array}
\right.  \label{dinf1}%
\end{equation}
and
\begin{equation}
(\left\vert X\right\vert ^{p-2}X-\left\vert Y\right\vert ^{p-2}Y))(X-Y)\geq
C_{p}\left\{
\begin{array}
[c]{ccl}%
\dfrac{\left\vert X-Y\right\vert ^{2}}{(\left\vert X\right\vert +\left\vert
Y\right\vert )^{2-p}} & \mathrm{if} & 1<p<2\\
\left\vert X-Y\right\vert ^{p} & \mathrm{if} & p\geq2.
\end{array}
\right.  \label{dinf2}%
\end{equation}

\begin{lemma}
\label{C+}Let $v_{1},v_{2}\in W_{0}^{s,p}(\Omega)\setminus\{0\}.$ There exists
a positive constant $C,$ depending at most on $\Omega,N,s$ and $p,$ such that%
\[
\left\langle (-\Delta_{p})^{s}\,v_{1}-(-\Delta_{p})^{s}\,v_{2},v_{1}%
-v_{2}\right\rangle \geq C\left\{
\begin{array}
[c]{ccc}%
\dfrac{\left[  v_{1}-v_{2}\right]  _{s,p}^{2}}{\left(  \left[  v_{1}\right]
_{s,p}^{p}+\left[  v_{2}\right]  _{s,p}^{p}\right)  ^{\frac{2-p}{p}}} &
\mathrm{if} & 1<p<2\\
&  & \\
\left[  v_{1}-v_{2}\right]  _{s,p}^{p} & \mathrm{if} & p\geq2.
\end{array}
\right.
\]

\end{lemma}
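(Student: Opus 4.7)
The plan is to combine the algebraic pointwise estimate in Lemma \ref{proputil} with Hölder's inequality applied to the Gagliardo measure. Writing $\mathrm{d}\mu := |x-y|^{-(N+sp)}\mathrm{d}x\mathrm{d}y$ and using the notation $\widetilde{v}(x,y) = v(x)-v(y)$, the duality pairing
\[
\left\langle (-\Delta_p)^s v_1 - (-\Delta_p)^s v_2, v_1-v_2\right\rangle = \iint_{\mathbb{R}^{2N}} \bigl(|\widetilde{v_1}|^{p-2}\widetilde{v_1}-|\widetilde{v_2}|^{p-2}\widetilde{v_2}\bigr)\bigl(\widetilde{v_1}-\widetilde{v_2}\bigr)\,\mathrm{d}\mu,
\]
is obtained by integrating the pointwise expression supplied by (\ref{dinf2}) applied to $X=\widetilde{v_1}(x,y)$ and $Y=\widetilde{v_2}(x,y)$.

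In the easy case $p\geq 2$, I would just substitute the pointwise lower bound $C_p|\widetilde{v_1}-\widetilde{v_2}|^p=C_p|\widetilde{v_1-v_2}|^p$ into the integral and recognize $[v_1-v_2]_{s,p}^p$ directly; no further work is needed and $C$ may be taken as $C_p$.

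The nontrivial case is $1<p<2$. After invoking (\ref{dinf2}) one obtains
\[
\left\langle (-\Delta_p)^s v_1 - (-\Delta_p)^s v_2, v_1-v_2\right\rangle \geq C_p \iint_{\mathbb{R}^{2N}} \frac{|\widetilde{v_1-v_2}|^2}{(|\widetilde{v_1}|+|\widetilde{v_2}|)^{2-p}}\,\mathrm{d}\mu,
\]
and the task is to bound this weighted integral from below by $[v_1-v_2]_{s,p}^2 / ([v_1]_{s,p}^p+[v_2]_{s,p}^p)^{(2-p)/p}$. The key trick is to write
\[
|\widetilde{v_1-v_2}|^p = \left(\frac{|\widetilde{v_1-v_2}|^2}{(|\widetilde{v_1}|+|\widetilde{v_2}|)^{2-p}}\right)^{p/2}\bigl(|\widetilde{v_1}|+|\widetilde{v_2}|\bigr)^{p(2-p)/2}
\]
and to apply Hölder's inequality with the conjugate exponents $2/p$ and $2/(2-p)$ to the integrals against $\mathrm{d}\mu$. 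This yields
\[
[v_1-v_2]_{s,p}^p \leq \left(\iint \frac{|\widetilde{v_1-v_2}|^2}{(|\widetilde{v_1}|+|\widetilde{v_2}|)^{2-p}}\mathrm{d}\mu\right)^{p/2}\left(\iint (|\widetilde{v_1}|+|\widetilde{v_2}|)^p\,\mathrm{d}\mu\right)^{(2-p)/2}.
\]
The elementary bound $(a+b)^p\leq 2^{p-1}(a^p+b^p)$ converts the second factor into $\bigl(2^{p-1}([v_1]_{s,p}^p+[v_2]_{s,p}^p)\bigr)^{(2-p)/2}$, and after raising to the power $2/p$ and rearranging, the desired lower estimate for the weighted integral (and thus for the duality pairing) follows with $C = C_p\, 2^{-(p-1)(2-p)/p}$.

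The only real obstacle is picking the correct splitting in Hölder's inequality. The choice of exponents $2/p$ and $2/(2-p)$ is forced once one notices that the weight $(|\widetilde{v_1}|+|\widetilde{v_2}|)^{2-p}$ must be eliminated and the natural norms available are $[v_1-v_2]_{s,p}^p$ on one side and $[v_i]_{s,p}^p$ on the other; after that the rest is bookkeeping.
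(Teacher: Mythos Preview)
Your proposal is correct and follows essentially the same route as the paper's own proof: the pointwise estimate (\ref{dinf2}) handles $p\geq 2$ directly, and for $1<p<2$ both you and the paper apply H\"older's inequality with exponents $2/p$ and $2/(2-p)$ to the same factorization of $|\widetilde{v_1-v_2}|^p$. The only cosmetic difference is that you use the sharper bound $(a+b)^p\leq 2^{p-1}(a^p+b^p)$ where the paper uses $2^p$, yielding a marginally better constant.
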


\begin{proof}
When $p\geq2$ estimates (\ref{dinf2}) and (\ref{Sq}) yield
\begin{align*}
\left\langle (-\Delta_{p})^{s}\,v_{1}-(-\Delta_{p})^{s}\,v_{2},v_{1}%
-v_{2}\right\rangle  &  \geq C_{p}\int\int_{\mathbb{R}^{2N}}\dfrac{\left\vert
\widetilde{v_{1}}-\widetilde{v_{2}}\right\vert ^{p}}{\left\vert x-y\right\vert
^{N+sp}}\mathrm{d}x\mathrm{d}y\\
&  =C_{p}\left[  v_{1}-v_{2}\right]  _{s,p}^{p}.
\end{align*}

Now, let us consider the case $1<p<2.$ It follows from (\ref{dinf2}) that
\begin{align*}
\left\langle (-\Delta_{p})^{s}\,v_{1}-(-\Delta_{p})^{s}\,v_{2},v_{1}%
-v_{2}\right\rangle  &  =\int\int_{\mathbb{R}^{2N}}\frac{\left\vert
\widetilde{v_{1}}\right\vert ^{p-2}\widetilde{v_{1}}-\left\vert \widetilde
{v_{2}}\right\vert ^{p-2}\widetilde{v_{2}}}{\left\vert x-y\right\vert ^{N+sp}%
}(\widetilde{v_{1}}-\widetilde{v_{2}})\mathrm{d}x\mathrm{d}y\\
&  \geq C_{p}\int\int_{\mathbb{R}^{2N}}\dfrac{\left\vert \widetilde{v_{1}%
}-\widetilde{v_{2}}\right\vert ^{2}}{(\left\vert \widetilde{v_{1}}\right\vert
+\left\vert \widetilde{v_{2}}\right\vert )^{2-p}\left\vert x-y\right\vert
^{N+sp}}\mathrm{d}x\mathrm{d}y.
\end{align*}
H\"{o}lder inequality yields%
\begin{align*}
\left[  v_{1}-v_{2}\right]  _{s,p}^{p}  &  =\int\int_{\mathbb{R}^{2N}}%
\dfrac{\left\vert \widetilde{v_{1}}-\widetilde{v_{2}}\right\vert ^{p}%
}{\left\vert x-y\right\vert ^{N+sp}}\mathrm{d}x\mathrm{d}y\\
&  =\int\int_{\mathbb{R}^{2N}}\dfrac{\left\vert \widetilde{v_{1}}%
-\widetilde{v_{2}}\right\vert ^{p}(\left\vert \widetilde{v_{1}}\right\vert
+\left\vert \widetilde{v_{2}}\right\vert )^{\frac{p(2-p)}{2}}}{(\left\vert
\widetilde{v_{1}}\right\vert +\left\vert \widetilde{v_{2}}\right\vert
)^{\frac{p(2-p)}{2}}\left\vert x-y\right\vert ^{N+sp}}\mathrm{d}%
x\mathrm{d}y\leq A^{\frac{p}{2}}B^{\frac{2-p}{2}}%
\end{align*}
where%
\begin{align*}
A  &  =\int\int_{\mathbb{R}^{2N}}(\left\vert \widetilde{v_{1}}-\widetilde
{v_{2}}\right\vert ^{p}(\left\vert \widetilde{v_{1}}\right\vert +\left\vert
\widetilde{v_{2}}\right\vert )^{-\frac{p(2-p)}{2}}\left\vert x-y\right\vert
^{-(N+sp)\frac{p}{2}})^{\frac{2}{p}}\mathrm{d}x\mathrm{d}y\\
&  =\int\int_{\mathbb{R}^{2N}}\dfrac{\left\vert \widetilde{v_{1}}%
-\widetilde{v_{2}}\right\vert ^{2}}{(\left\vert \widetilde{v_{1}}\right\vert
+\left\vert \widetilde{v_{2}}\right\vert )^{2-p}\left\vert x-y\right\vert
^{N+sp}}\mathrm{d}x\mathrm{d}y
\end{align*}
and%
\begin{align*}
B  &  =\int\int_{\mathbb{R}^{2N}}\left(  (\widetilde{v_{1}}+\left\vert
\widetilde{v_{2}}\right\vert )^{\frac{p(2-p)}{2}}\left\vert x-y\right\vert
^{-(N+sp)\frac{2-p}{2}}\right)  ^{\frac{2}{2-p}}\mathrm{d}x\mathrm{d}y\\
&  =\int\int_{\mathbb{R}^{2N}}\dfrac{(\left\vert \widetilde{v_{1}}\right\vert
+\left\vert \widetilde{v_{2}}\right\vert )^{p}}{\left\vert x-y\right\vert
^{N+sp}}\mathrm{d}x\mathrm{d}y\\
&  \leq2^{p}\int\int_{\mathbb{R}^{2N}}\dfrac{\left\vert \widetilde{v_{1}%
}\right\vert ^{p}+\left\vert \widetilde{v_{2}}\right\vert ^{p}}{\left\vert
x-y\right\vert ^{N+sp}}\mathrm{d}x\mathrm{d}y=2^{p}\left(  \left[
v_{1}\right]  _{s,p}^{p}+\left[  v_{2}\right]  _{s,p}^{p}\right)  .
\end{align*}
Therefore,
\begin{align*}
\left\langle (-\Delta_{p})^{s}\,v_{1}-(-\Delta_{p})^{s}\,v_{2},v_{1}%
-v_{2}\right\rangle  &  \geq C_{p}A\\
&  \geq C_{p}\left(  \left[  v_{1}-v_{2}\right]  _{s,p}^{p}B^{-\frac{2-p}{2}%
}\right)  ^{\frac{2}{p}}\\
&  \geq C_{p}\left[  v_{1}-v_{2}\right]  _{s,p}^{2}\left(  2^{p}\left(
\left[  v_{1}\right]  _{s,p}^{p}+\left[  v_{2}\right]  _{s,p}^{p}\right)
\right)  ^{-\frac{2-p}{p}}\\
&  =\frac{C\left[  v_{1}-v_{2}\right]  _{s,p}^{2}}{\left(  \left[
v_{1}\right]  _{s,p}^{p}+\left[  v_{2}\right]  _{s,p}^{p}\right)  ^{\frac
{2-p}{p}}}.
\end{align*}

\end{proof}

At this point we can already prove that weak solutions are unique.

\begin{theorem}
\label{uniqueness}The singular fractional Dirichlet problem (\ref{palfa}),
with $\alpha>0,$ has at most one weak solution.
\end{theorem}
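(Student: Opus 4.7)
The natural strategy is the classical monotonicity argument, adapted to the singular nonlocal setting. Suppose $u_1,u_2\in W_0^{s,p}(\Omega)$ are two weak solutions of (\ref{palfa}). Since $u_1-u_2\in W_0^{s,p}(\Omega)$, Proposition \ref{wtest} allows us to use it as a test function in both equations. Subtracting yields
\[
\left\langle (-\Delta_{p})^{s}u_{1}-(-\Delta_{p})^{s}u_{2},u_{1}-u_{2}\right\rangle =\int_{\Omega}\omega\,(u_{1}-u_{2})\!\left(\frac{1}{u_{1}^{\alpha}}-\frac{1}{u_{2}^{\alpha}}\right)\mathrm{d}x.
\]
A preliminary check is that both integrals are finite: the right-hand side is controlled by (\ref{aux11}) applied with $v=u_1-u_2$, while condition (i) of Definition \ref{weaksol} prevents any pathological behaviour from the singular factors on compact subsets.

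The right-hand side is non-positive, because $\omega\geq0$ and $t\mapsto t^{-\alpha}$ is strictly decreasing on $(0,\infty)$, so the product $(u_{1}-u_{2})(u_{1}^{-\alpha}-u_{2}^{-\alpha})$ is pointwise $\leq 0$ (and vanishes only where $u_1=u_2$).

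The left-hand side, on the other hand, is non-negative by Lemma \ref{C+}: it is bounded below by
\[
C\,[u_{1}-u_{2}]_{s,p}^{p}\quad(p\geq2)\qquad\text{or}\qquad \frac{C\,[u_{1}-u_{2}]_{s,p}^{2}}{\bigl([u_{1}]_{s,p}^{p}+[u_{2}]_{s,p}^{p}\bigr)^{(2-p)/p}}\quad(1<p<2).
\]
Combining the two sign estimates forces $[u_{1}-u_{2}]_{s,p}=0$, and hence $u_{1}=u_{2}$.

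The argument is essentially routine once Proposition \ref{wtest} and Lemma \ref{C+} are in hand; the only point that requires care is justifying the admissibility of $u_1-u_2$ as a test function despite the singular right-hand side, which is precisely what the integrability bound (\ref{aux11}) guarantees. No truncation or approximation scheme is needed because the testing has already been extended from $C_c^\infty(\Omega)$ to all of $W_0^{s,p}(\Omega)$.
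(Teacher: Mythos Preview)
Your proof is correct and follows essentially the same route as the paper: test the difference of the two weak equations with $u_1-u_2$ via Proposition~\ref{wtest}, observe that the right-hand side is non-positive by the strict monotonicity of $t\mapsto t^{-\alpha}$, and conclude $[u_1-u_2]_{s,p}=0$ from Lemma~\ref{C+}. Your additional remarks on integrability via (\ref{aux11}) are accurate but not strictly needed, since Proposition~\ref{wtest} already ensures both sides are well defined for any $\varphi\in W_0^{s,p}(\Omega)$.
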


\begin{proof}
Let us suppose that $u,v\in W_{0}^{s,p}(\Omega)$ are weak solutions of
(\ref{palfa}). Then, according to Proposition \ref{wtest}, we have%
\[
\left\langle (-\Delta_{p})^{s}\,u-(-\Delta_{p})^{s}\,v,u-v\right\rangle =%
{\displaystyle\int_{\Omega}}
\omega\left(  \frac{1}{u^{\alpha}}-\frac{1}{v^{\alpha}}\right)
(u-v)\mathrm{d}x\leq0,
\]
since the integrand of the right-hand term is not positive in $\Omega.$ Thus,
according to Lemma \ref{C+}, we must have $\left[  u-v\right]  _{s,p}=0,$
showing that $u=v$ almost everywhere.
\end{proof}

\subsection{$L^{\infty}$ bounds\label{secbounds}}

The following lemma can be found in \cite[Lemma 2.1]{stam}. For the sake of
completeness, we sketch its proof.

\begin{lemma}
\label{Stamlem}Let $g$ be a nonnegative and nonincreasing function defined for
all $t\geq k_{0}$ and such that
\begin{equation}
g(h)\leq\frac{C}{(h-k)^{\theta}}[g(k)]^{b},\quad\mathrm{whenever}\quad
k_{0}\leq k<h, \label{stam1}%
\end{equation}
where $C,\theta$ and $b$ are constants, $C,\theta>0$ and $b>1$. Then,
\begin{equation}
g(k_{0}+d)=0, \label{stam3}%
\end{equation}
where $d^{\theta}=C[g(k_{0})]^{b-1}2^{\theta b/(b-1)}.$
\end{lemma}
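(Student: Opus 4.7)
The plan is to apply Stampacchia's classical iteration argument: construct an increasing sequence $\{k_n\}$ converging to $k_0+d$ and show that $g(k_n)$ decays geometrically to $0$, so that monotonicity forces $g(k_0+d)=0$.

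First I would set $k_n := k_0 + d - d\cdot 2^{-n}$, so $k_0$ is the starting point, $k_n \nearrow k_0+d$, and $k_{n+1}-k_n = d\cdot 2^{-(n+1)}$. Plugging $k=k_n$, $h=k_{n+1}$ into the hypothesis (\ref{stam1}) gives
\[
g(k_{n+1}) \leq \frac{C\cdot 2^{\theta(n+1)}}{d^{\theta}}\,[g(k_n)]^{b}.
\]
Next I would guess a geometric bound of the form $g(k_n) \leq g(k_0)\cdot q^{-n}$ and determine $q$ so that the induction closes. Substituting into the recursion shows the exponent of $2$ on the right must match, leading to $q = 2^{\theta/(b-1)}$; the constant condition then collapses to
\[
d^{\theta} \;\geq\; C\,[g(k_0)]^{b-1}\,2^{\theta b/(b-1)},
\]
which holds with equality by the definition of $d$. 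Thus an easy induction on $n$ establishes
\[
g(k_n) \leq g(k_0)\cdot 2^{-\theta n/(b-1)} \quad \text{for every } n\in\mathbb{N}.
\]

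Finally, since $b>1$ and $\theta>0$, the right-hand side tends to $0$, so $g(k_n)\to 0$. Because $g$ is nonincreasing and $k_n < k_0+d$ for every $n$, we have $g(k_0+d)\leq g(k_n)$, and letting $n\to\infty$ yields (\ref{stam3}).

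There is no real obstacle here; the only subtle point is making the right ansatz for the decay rate $q$ so that both the exponent of $2$ and the constant factor line up simultaneously with the prescribed value of $d$. Everything else is a short induction plus monotonicity of $g$.
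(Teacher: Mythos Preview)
Your proof is correct and follows essentially the same approach as the paper: the same sequence $k_{n}=k_{0}+d-d\cdot 2^{-n}$, the same inductive bound $g(k_{n})\leq g(k_{0})\,2^{-n\theta/(b-1)}$, and the same conclusion via the monotonicity of $g$. You have merely supplied the details that the paper's sketch omits (in particular, you explain how the choice of $d$ makes the induction close), so there is nothing to add.
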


\begin{proof}
Let $\left\{  k_{n}\right\}  _{n\in\mathbb{N}}$ be the increasing sequence
defined by $k_{n}:=k_{0}+d-\dfrac{d}{2^{n}}<k_{0}+d.$ Using (\ref{stam1}) one
can show, by induction, that%
\[
g(k_{n})\leq g(k_{0})2^{-\frac{na}{b-1}}.
\]
Hence, since $0\leq g(k_{0}+d)\leq g(k_{n})$ we obtain (\ref{stam3}), after
making $n\rightarrow\infty.$
\end{proof}

\begin{theorem}
\label{global}Let $\alpha>0$ and $\omega\in L^{r}(\Omega),$ with $pr^{\prime
}<p_{s}^{\star}.$ If $u\in W_{0}^{s,p}(\Omega)$ is positive in $\Omega$ and
satisfies%
\[
\left\langle (-\Delta_{p})^{s}\,u,\varphi\right\rangle \leq%
{\displaystyle\int_{\Omega}}
\dfrac{\omega\varphi}{u^{\alpha}}\mathrm{d}x,\quad\forall\,\varphi\in
W_{0}^{s,p}(\Omega),\quad\varphi\geq0,
\]
then $u\in L^{\infty}(\Omega).$ Moreover, for each $pr^{\prime}<\theta
<p_{s}^{\star},$ one has%
\begin{equation}
\left\Vert u\right\Vert _{\infty}\leq C_{\alpha}\left(  \frac{\left\Vert
\omega\right\Vert _{r}}{S_{\theta}}\right)  ^{\frac{1}{p-1+\alpha}}%
2^{\frac{b(p-1)}{(b-1)(p-1+\alpha)}}\left\vert \Omega\right\vert
^{\frac{(b-1)(p-1)}{\theta(p-1+\alpha)}} \label{lbound}%
\end{equation}
where
\begin{equation}
C_{\alpha}:=\left(  \frac{\alpha}{p-1}\right)  ^{\frac{p-1}{p-1+\alpha}%
}\left(  1+\frac{p-1}{\alpha}\right)  \quad\mathrm{and}\quad b:=(\frac{\theta
}{r^{\prime}}-1)\frac{1}{p-1}>1. \label{Cab}%
\end{equation}

\end{theorem}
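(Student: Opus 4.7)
The plan is to run Stampacchia's truncation method with the twist that the singular right-hand side $\omega/u^{\alpha}$ is pointwise dominated by $\omega/k^{\alpha}$ on the set $\{u>k\}$. This extra factor of $k^{-\alpha}$, after optimization, is what converts the usual embedding exponent $1/(p-1)$ into the exponent $1/(p-1+\alpha)$ appearing in the theorem.

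For any $k>0$ the function $\varphi=(u-k)_{+}$ lies in $W_{0}^{s,p}(\Omega)$ (it vanishes outside $\Omega$), so I would plug it into the hypothesis. A short case analysis on the pair $(u(x),u(y))$ yields the pointwise inequality
\[
|\widetilde{u}|^{p-2}\widetilde{u}\,\widetilde{\varphi}\ \geq\ |\widetilde{\varphi}|^{p},
\]
hence $\langle(-\Delta_{p})^{s}u,(u-k)_{+}\rangle\geq\lbrack(u-k)_{+}]_{s,p}^{p}$. On the right-hand side I bound $u^{-\alpha}\leq k^{-\alpha}$ on $\{u>k\}$ and apply the three-function Hölder inequality with exponents $r,\theta,\tau$, where $\tau^{-1}=1-r^{-1}-\theta^{-1}=(r^{\prime})^{-1}-\theta^{-1}$, giving
\[
\int_{\Omega}\frac{\omega(u-k)_{+}}{u^{\alpha}}\,\mathrm{d}x\ \leq\ \frac{\|\omega\|_{r}}{k^{\alpha}}\,\|(u-k)_{+}\|_{\theta}\,A(k)^{\frac{1}{r^{\prime}}-\frac{1}{\theta}},
\]
where $A(k):=|\{u>k\}|$. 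Combining the two estimates with the Sobolev embedding $\|(u-k)_{+}\|_{\theta}^{p}\leq S_{\theta}[(u-k)_{+}]_{s,p}^{p}$ and absorbing one factor of $\|(u-k)_{+}\|_{\theta}$ gives
\[
\|(u-k)_{+}\|_{\theta}^{p-1}\ \leq\ \frac{S_{\theta}\|\omega\|_{r}}{k^{\alpha}}\,A(k)^{\frac{1}{r^{\prime}}-\frac{1}{\theta}}.
\]
Since $(h-k)^{\theta}A(h)\leq\|(u-k)_{+}\|_{\theta}^{\theta}$ for every $h>k$, this turns into the Stampacchia-type inequality
\[
A(h)\ \leq\ \frac{1}{(h-k)^{\theta}}\Bigl(\frac{S_{\theta}\|\omega\|_{r}}{k_{0}^{\alpha}}\Bigr)^{\!\frac{\theta}{p-1}}\,A(k)^{b},\qquad k_{0}\leq k<h,
\]
with $b=(\theta/r^{\prime}-1)/(p-1)>1$ (thanks to $pr^{\prime}<\theta$); here I used the monotonicity $k^{-\alpha}\leq k_{0}^{-\alpha}$ to freeze the constant along the iteration.

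Lemma \ref{Stamlem} now yields $A(k_{0}+d(k_{0}))=0$, i.e.\ $\|u\|_{\infty}\leq k_{0}+d(k_{0})$, with
\[
d(k_{0})\ =\ \bigl(S_{\theta}\|\omega\|_{r}\bigr)^{\!\frac{1}{p-1}}k_{0}^{-\frac{\alpha}{p-1}}|\Omega|^{\frac{b-1}{\theta}}2^{\frac{b}{b-1}},
\]
after using $A(k_{0})\leq|\Omega|$. Finally I minimize $k_{0}\mapsto k_{0}+d(k_{0})$: the optimum is $k_{0}=\bigl(\tfrac{\alpha}{p-1}C\bigr)^{(p-1)/(p-1+\alpha)}$ with $C=(S_{\theta}\|\omega\|_{r})^{1/(p-1)}|\Omega|^{(b-1)/\theta}2^{b/(b-1)}$, and a direct computation reduces the two remaining terms to a common factor $C_{\alpha}\,C^{(p-1)/(p-1+\alpha)}$, where the identity
\[
\bigl(\tfrac{\alpha}{p-1}\bigr)^{\!\frac{p-1}{p-1+\alpha}}+\bigl(\tfrac{p-1}{\alpha}\bigr)^{\!\frac{\alpha}{p-1+\alpha}}\ =\ \bigl(\tfrac{\alpha}{p-1}\bigr)^{\!\frac{p-1}{p-1+\alpha}}\Bigl(1+\tfrac{p-1}{\alpha}\Bigr)
\]
produces exactly $C_{\alpha}$. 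Unwinding $C^{(p-1)/(p-1+\alpha)}$ gives the claimed formula.

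The bulk of the argument is routine once $(u-k)_{+}$ is tested; the genuine technical point is the bookkeeping in the final step, where the coupling between $k_{0}$ (the starting threshold that fixes the Stampacchia constant) and $d(k_{0})$ (the length of the iteration) has to be balanced so that both the exponent $\frac{1}{p-1+\alpha}$ and the constant $C_{\alpha}$ come out correctly. A secondary care is needed to verify the admissibility of $(u-k)_{+}$ as a test function and the integrability of $\omega(u-k)_{+}/u^{\alpha}$, which follow from $pr^{\prime}<p_{s}^{\star}$ and the boundedness of $u^{-\alpha}$ on $\{u>k\}$.
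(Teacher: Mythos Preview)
Your proposal is correct and follows essentially the same route as the paper: test with $(u-k)_{+}$, combine H\"older's inequality with the Sobolev embedding to get a Stampacchia-type recursion for $|\{u>k\}|$, apply Lemma~\ref{Stamlem}, and then optimize over the starting threshold $k_{0}$. The only cosmetic differences are that you use a single three-exponent H\"older inequality where the paper applies H\"older twice, and your placement of $S_{\theta}$ in the numerator is consistent with the paper's definition~(\ref{Sq}) whereas the statement of the theorem (and the paper's own proof) silently use the reciprocal convention---this is an inconsistency in the paper, not a flaw in your argument.
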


\begin{proof}
Let
\[
A_{k}:=\{x\in\Omega:u(x)>k\}
\]
be the $k$-super-level set of $u,$ for each $k\geq0.$ Since $(u-k)_{+}\in
W_{0}^{s,p}(\Omega)$ we obtain
\begin{align*}
\left[  (u-k)_{+}\right]  _{s,p}^{p}  &  \leq\left\langle \left(  -\Delta
_{p}\right)  ^{s}\,u,(u-k)_{+}\right\rangle \\
&  \leq\int_{\Omega}\frac{\omega}{u^{\alpha}}(u-k)_{+}\mathrm{d}x\\
&  =\int_{A_{k}}\frac{\omega}{u^{\alpha}}(u-k)\mathrm{d}x\leq\frac{\left\Vert
\omega\right\Vert _{r}}{k^{\alpha}}\left(  \int_{A_{k}}(u-k)^{r^{\prime}%
}\mathrm{d}x\right)  ^{\frac{1}{r^{\prime}}},
\end{align*}
where the first inequality can be easily checked.

Let $\theta$ be such that $pr^{\prime}<\theta<p_{s}^{\star}.$ Then, the
continuity of the Sobolev embedding $W_{0}^{s,p}(\Omega)\hookrightarrow
L^{\theta}(\Omega)$ and the H\"{o}lder inequality imply that
\begin{align*}
S_{\theta}\left(  \int_{A_{k}}(u-k)^{\theta}\mathrm{d}x\right)  ^{\frac
{p}{\theta}}  &  =S_{\theta}\left(  \int_{\Omega}(u-k)_{+}^{\theta}%
\mathrm{d}x\right)  ^{\frac{p}{\theta}}\\
&  \leq\left[  (u-k)_{+}\right]  _{s,p}^{p}\\
&  \leq\frac{\left\Vert \omega\right\Vert _{r}}{k^{\alpha}}\left(  \int
_{A_{k}}(u-k)^{r^{\prime}}\mathrm{d}x\right)  ^{\frac{1}{r^{\prime}}}\\
&  \leq\frac{\left\Vert \omega\right\Vert _{r}}{k^{\alpha}}\left(  \int
_{A_{k}}(u-k)^{\theta}\mathrm{d}x\right)  ^{\frac{1}{\theta}}\left\vert
A_{k}\right\vert ^{\frac{1}{r^{\prime}}-\frac{1}{\theta}}%
\end{align*}
so that%
\begin{equation}
S_{\theta}\left(  \int_{A_{k}}(u-k)^{\theta}\mathrm{d}x\right)  ^{\frac
{p-1}{\theta}}\leq\frac{\left\Vert \omega\right\Vert _{r}}{k^{\alpha}%
}\left\vert A_{k}\right\vert ^{\frac{1}{r^{\prime}}-\frac{1}{\theta}}.
\label{stam4}%
\end{equation}

Let $0<k_{0}\leq k<h.$ Then, $A_{h}\subset A_{k}$ and
\begin{equation}
\left\vert A_{h}\right\vert ^{\frac{1}{\theta}}(h-k)=\left(  \int_{A_{h}%
}(h-k)^{\theta}\mathrm{d}x\right)  ^{\frac{1}{\theta}}\leq\left(  \int_{A_{h}%
}(u-k)^{\theta}\mathrm{d}x\right)  ^{\frac{1}{\theta}}\leq\left(  \int_{A_{k}%
}(u-k)^{\theta}\mathrm{d}x\right)  ^{\frac{1}{\theta}}.\nonumber
\end{equation}
After combining this with (\ref{stam4}) we get (recall that $k^{\alpha}%
\geq(k_{0})^{\alpha}$)%
\[
S_{\theta}\left\vert A_{h}\right\vert ^{\frac{p-1}{\theta}}(h-k)^{p-1}%
\leq\frac{\left\Vert \omega\right\Vert _{r}}{(k_{0})^{\alpha}}\left\vert
A_{k}\right\vert ^{\frac{1}{r^{\prime}}-\frac{1}{\theta}}%
\]
which can be rewritten as%
\[
g(h)\leq\frac{C}{(h-k)^{\theta}}[g(k)]^{b}%
\]
where%
\[
g(t):=\left\vert A_{t}\right\vert ,\quad C:=\left(  \frac{\left\Vert
\omega\right\Vert _{r}}{S_{\theta}(k_{0})^{\alpha}}\right)  ^{\frac{\theta
}{p-1}}%
\]
and
\[
b=(\frac{1}{r^{\prime}}-\frac{1}{\theta})\frac{\theta}{p-1}=(\frac{\theta
}{r^{\prime}}-1)\frac{1}{p-1}>(p-1)\frac{1}{p-1}=1.
\]

It follows from Lemma \ref{Stamlem}, with $d^{\theta}=C[\left\vert
A_{1}\right\vert ]^{b-1}2^{\theta b/(b-1)},$ that%
\[
\left\vert A_{t}\right\vert \leq\left\vert A_{k_{0}+d}\right\vert
=0,\quad\forall\,t\geq k_{0}+d.
\]
This fact shows that $u\in L^{\infty}(\Omega)$ and
\[
\left\Vert u\right\Vert _{\infty}\leq k_{0}+d\leq k_{0}+\left(  \frac
{\left\Vert \omega\right\Vert _{r}}{S_{\theta}(k_{0})^{\alpha}}\right)
^{\frac{1}{p-1}}2^{b/(b-1)}\left\vert \Omega\right\vert ^{\frac{b-1}{\theta}%
}=k_{0}+(k_{0})^{-\frac{\alpha}{p-1}}A
\]
where
\[
A=\left(  \frac{\left\Vert \omega\right\Vert _{r}}{S_{\theta}}\right)
^{\frac{1}{p-1}}2^{b/(b-1)}\left\vert \Omega\right\vert ^{\frac{b-1}{\theta}%
}.
\]

After choosing the optimal value of $k_{0}$ we obtain%
\[
\left\Vert u\right\Vert _{\infty}\leq\left(  \frac{\alpha}{p-1}\right)
^{\frac{p-1}{p-1+\alpha}}\left(  1+\frac{p-1}{\alpha}\right)  \left(
\frac{\left\Vert \omega\right\Vert _{r}}{S_{\theta}}\right)  ^{\frac
{1}{p-1+\alpha}}2^{\frac{b(p-1)}{(b-1)(p-1+\alpha)}}\left\vert \Omega
\right\vert ^{\frac{(b-1)(p-1)}{\theta(p-1+\alpha)}}.
\]

\end{proof}

\begin{remark}
\label{Global1}When $sp<N$ the proof of Theorem \ref{global} applies if
$r>\dfrac{N}{sp}$ and $\theta=p_{s}^{\star}.$ In this case, the estimate
(\ref{lbound}) becomes
\[
\left\Vert u\right\Vert _{\infty}\leq C_{\alpha}\left(  \frac{\left\Vert
\omega\right\Vert _{r}}{S_{p_{s}^{\star}}}\right)  ^{\frac{1}{p-1+\alpha}%
}2^{\frac{p_{s}^{\star}-r^{\prime}}{p_{s}^{\star}-pr^{\prime}}\frac
{p-1}{p-1+\alpha}}\left\vert \Omega\right\vert ^{\frac{p_{s}^{\star}%
-r^{\prime}p}{r^{\prime}p_{s}^{\star}}\frac{p-1}{p-1+\alpha}}.
\]

When $sp\geq N$ the condition $pr^{\prime}<p_{s}^{\star}=\infty$ naturally
holds true if $r>1,$ in which case the estimate (\ref{lbound}) is valid for
any fixed $\theta>pr^{\prime}.$
\end{remark}

\subsection{A family of approximating problems}

The following lemma is inspired by the proof of Lemma 9 of \cite{LindLind}.

\begin{lemma}
\label{lemma+}Let $v_{1},v_{2}\in W_{0}^{s,p}(\Omega)$ and denote
$v=v_{1}-v_{2}.$ Then,%
\[
\left\langle (-\Delta_{p})^{s}\,v_{1}-(-\Delta_{p})^{s}\,v_{2},v_{+}%
\right\rangle \geq(p-1)\int\int_{\mathbb{R}^{2N}}\dfrac{\left\vert
v_{+}(x)-v_{+}(y)\right\vert ^{2}Q(x,y)}{\left\vert x-y\right\vert ^{N+sp}%
}\mathrm{d}x\mathrm{d}y,
\]
where%
\begin{equation}
Q(x,y)=\int_{0}^{1}\left\vert \widetilde{v_{2}}(x,y)+t(\widetilde{v_{1}%
}(x,y)-\widetilde{v_{2}}(x,y))\right\vert ^{p-2}dt\geq0. \label{Q}%
\end{equation}

\end{lemma}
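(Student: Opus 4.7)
The heart of the argument is a one-variable integral representation for the nonlinearity $f(z):=|z|^{p-2}z$. Since $f\in C^{1}(\mathbb{R}\setminus\{0\})$ with $f'(z)=(p-1)|z|^{p-2}$, and since $p>1$ ensures that $f$ is absolutely continuous along any line segment in $\mathbb{R}$ (the possible singularity $|\cdot|^{p-2}$ is integrable because $p-2>-1$), the fundamental theorem of calculus yields, for any $X,Y\in\mathbb{R}$,
\[
|X|^{p-2}X-|Y|^{p-2}Y=(p-1)(X-Y)\int_{0}^{1}|Y+t(X-Y)|^{p-2}\mathrm{d}t.
\]
I will apply this identity pointwise (for a.e.\ $(x,y)\in\mathbb{R}^{2N}$) with $X=\widetilde{v_{1}}(x,y)$ and $Y=\widetilde{v_{2}}(x,y)$, so that the bracketed integral is exactly $Q(x,y)\ge 0$.

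Substituting into the duality pairing expression and using $\widetilde{v_{1}}-\widetilde{v_{2}}=\widetilde{v}$ gives
\[
\left\langle(-\Delta_{p})^{s}v_{1}-(-\Delta_{p})^{s}v_{2},v_{+}\right\rangle
=(p-1)\int\int_{\mathbb{R}^{2N}}\frac{\widetilde{v}(x,y)\,\widetilde{v_{+}}(x,y)\,Q(x,y)}{|x-y|^{N+sp}}\mathrm{d}x\mathrm{d}y.
\]
It then remains to establish the pointwise scalar inequality
\[
\widetilde{v}(x,y)\,\widetilde{v_{+}}(x,y)\ge\bigl|\widetilde{v_{+}}(x,y)\bigr|^{2},
\]
because $Q\ge0$ then yields the claim after dividing by $|x-y|^{N+sp}$ and integrating.

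The pointwise inequality is a short case check on the signs of $v(x)$ and $v(y)$: if both are nonnegative, one has $v_{+}=v$ at both points and equality holds; if both are negative, both sides vanish; and in the mixed case, say $v(x)\ge0>v(y)$, a direct computation gives $(v(x)-v(y))v(x)-v(x)^{2}=-v(x)v(y)\ge0$, so the inequality is strict. The symmetric case is identical.

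The main (mild) obstacle is justifying the integral representation of $f(X)-f(Y)$ when the segment $[Y,X]$ passes through the origin, which happens exactly when $\widetilde{v_{1}}$ and $\widetilde{v_{2}}$ have opposite signs; here one must note that near the crossing $t_{0}$ one has $|Y+t(X-Y)|^{p-2}\sim c|t-t_{0}|^{p-2}$, which is Lebesgue integrable on $[0,1]$ precisely because $p>1$. Once this is in place the remaining manipulations are algebraic, and no differentiability of the energy functional is involved.
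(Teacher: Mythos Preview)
Your proof is correct and follows essentially the same route as the paper: both use the integral representation $|b|^{p-2}b-|a|^{p-2}a=(p-1)(b-a)\int_{0}^{1}|a+t(b-a)|^{p-2}\,dt$ applied pointwise to $\widetilde{v_{1}},\widetilde{v_{2}}$, then invoke the elementary inequality $(v(x)-v(y))(v_{+}(x)-v_{+}(y))\ge |v_{+}(x)-v_{+}(y)|^{2}$ together with $Q\ge 0$. Your write-up in fact supplies more detail than the paper's (the case analysis for the pointwise inequality and the integrability check of $|Y+t(X-Y)|^{p-2}$ when the segment crosses zero), so nothing is missing.
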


\begin{proof}
Making use of the identity
\[
\left\vert b\right\vert ^{p-2}b-\left\vert a\right\vert ^{p-2}a=(p-1)(b-a)\int
_{0}^{1}\left\vert a+t(b-a)\right\vert ^{p-2}dt
\]
we obtain%
\[
\left\vert \widetilde{v_{1}}(x,y)\right\vert ^{p-2}\widetilde{v_{1}%
}(x,y)-\left\vert \widetilde{v_{2}}(x,y)\right\vert ^{p-2}\widetilde{v_{2}%
}(x,y)=(p-1)\left(  \widetilde{v_{1}}(x,y)-\widetilde{v_{2}}(x,y)\right)
Q(x,y)
\]
where $Q$ is given by (\ref{Q}).

Hence, we can write (recall that $v=v_{1}-v_{2}$)
\[
\left\langle (-\Delta_{p})^{s}\,v_{1}-(-\Delta_{p})^{s}v_{2},v_{+}%
\right\rangle =(p-1)\int\int_{\mathbb{R}^{2N}}\dfrac{\left(  \widetilde{v_{1}%
}(x,y)-\widetilde{v_{2}}(x,y)\right)  \,\widetilde{v_{+}}(x,y)\,Q(x,y)}%
{\left\vert x-y\right\vert ^{N+sp}}\mathrm{d}x\mathrm{d}y.
\]

Since%
\begin{align*}
\left(  \widetilde{v_{1}}(x,y)-\widetilde{v_{2}}(x,y)\right)  \widetilde
{v_{+}}(x,y)  &  =\left(  v_{1}(x)-v_{1}(y)-v_{2}(x)+v_{2}(y)\right)  \left(
v_{+}(x)-v_{+}(y)\right) \\
&  =\left(  v(x)-v(y)\right)  \left(  v_{+}(x)-v_{+}(y)\right)  \geq\left\vert
v_{+}(x)-v_{+}(y)\right\vert ^{2}%
\end{align*}
the proof is complete. (The latter inequality is very simple to check.)
\end{proof}

In the sequel we will show that, for each $n\in\mathbb{N},$ there exists a
unique function $u_{n}\in W_{0}^{s,p}(\Omega)\cap L^{\infty}(\Omega)$ such
that
\begin{equation}
\left\{
\begin{array}
[c]{lcl}%
(-\Delta_{p})^{s}\,u_{n}=\dfrac{\omega_{n}}{(u_{n}+\frac{1}{n})^{\alpha}} &
\mathrm{in} & \Omega\\
u_{n}>0 & \mathrm{in} & \Omega\\
u_{n}=0 & \mathrm{on} & \mathbb{R}^{N}\setminus\Omega,
\end{array}
\right.  \label{Pn}%
\end{equation}
where%
\[
\alpha>0\quad\mathrm{and}\quad\omega_{n}(x):=\min\left\{  \omega(x),n\right\}
.
\]

\begin{proposition}
\label{un}Let $\alpha>0$ and $\omega\in L^{1}(\Omega)\diagdown\left\{
0\right\}  ,\;\omega\geq0.$ For each $n\in\mathbb{N}$ there exists a unique
function $u_{n}\in W_{0}^{s,p}(\Omega)$ satisfying (\ref{Pn}) in the weak
sense, that is,%
\begin{equation}
\left\langle (-\Delta_{p})^{s}\,u_{n},\varphi\right\rangle =%
{\displaystyle\int_{\Omega}}
\dfrac{\omega_{n}\varphi}{(u_{n}+\frac{1}{n})^{\alpha}}\mathrm{d}%
x,\quad\forall\,\varphi\in W_{0}^{s,p}(\Omega). \label{weakun}%
\end{equation}
Moreover, $u_{n}$ is strictly positive in $\Omega,$ belongs to $C^{\beta_{s}%
}(\overline{\Omega}),$ for some $\beta_{s}\in(0,s]$ and%
\begin{equation}
\left[  u_{n}\right]  _{s,p}^{p}\leq\left[  \varphi\right]  _{s,p}^{p}+p%
{\displaystyle\int_{\Omega}}
\dfrac{\omega_{n}(u_{n}-\varphi)}{(u_{n}+\frac{1}{n})^{\alpha}}\mathrm{d}%
x,\quad\forall\,\varphi\in W_{0}^{s,p}(\Omega). \label{gmonot}%
\end{equation}

\end{proposition}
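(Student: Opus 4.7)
The plan is to obtain $u_n$ as the minimizer of the regularized energy
\[
J_n(u) := \tfrac{1}{p}[u]_{s,p}^{p} - \int_\Omega G_n(u)\,\omega_n\,\mathrm{d}x,
\qquad
G_n(t) := \int_0^{t}\bigl(\tau_{+}+\tfrac{1}{n}\bigr)^{-\alpha}\mathrm{d}\tau,
\]
on $W_0^{s,p}(\Omega)$. The nonlinearity $g_n(\tau):=(\tau_+ + 1/n)^{-\alpha}$ is continuous and bounded by $n^{\alpha}$, so $G_n$ is $C^{1}$, non-decreasing, and $|G_n(t)|\leq n^{\alpha}|t|$. Combined with $\omega_n\leq n$ and the Poincaré inequality (\ref{poincare}), this yields the coercivity bound $J_n(u)\geq\tfrac{1}{p}[u]_{s,p}^{p}-C n^{\alpha+1}[u]_{s,p}$. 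Weak lower semicontinuity is standard: $[\,\cdot\,]_{s,p}^{p}$ is convex, and the nonlinear term is weakly continuous because $W_0^{s,p}(\Omega)\hookrightarrow L^{1}(\Omega)$ is compact and $G_n$ is $n^{\alpha}$-Lipschitz. The direct method thus supplies a minimizer $u_n\in W_0^{s,p}(\Omega)$.

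I would next show $u_n$ may be taken $\geq 0$: a pointwise inspection gives $[(u_n)_+]_{s,p}\leq[u_n]_{s,p}$ (by the same case analysis as in Remark \ref{Sign}), while the monotonicity of $G_n$ gives $G_n((u_n)_+)\geq G_n(u_n)$; hence $J_n((u_n)_+)\leq J_n(u_n)$ and minimality lets us assume $u_n\geq 0$. Computing the first variation of $J_n$ along $\varphi\in W_0^{s,p}(\Omega)$ then produces the Euler-Lagrange identity (\ref{weakun}). Uniqueness follows as in the proof of Theorem \ref{uniqueness}: if $u_n,v_n$ both satisfy (\ref{weakun}), subtracting and using that $t\mapsto(t+1/n)^{-\alpha}$ is decreasing gives
\[
\langle(-\Delta_p)^{s}u_n-(-\Delta_p)^{s}v_n,u_n-v_n\rangle=\int_{\Omega}\omega_n\bigl[(u_n+\tfrac{1}{n})^{-\alpha}-(v_n+\tfrac{1}{n})^{-\alpha}\bigr](u_n-v_n)\,\mathrm{d}x\leq 0,
\]
so Lemma \ref{C+} forces $u_n=v_n$. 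For Hölder regularity up to the boundary and strict positivity, I would invoke the existing $C^{\beta_s}(\overline{\Omega})$ regularity theory for the fractional $p$-Laplacian with bounded datum (noting that here $\omega_n/(u_n+1/n)^{\alpha}\leq n^{1+\alpha}$) together with the strong maximum principle (using that $\omega_n\not\equiv 0$, which holds because $\omega\not\equiv 0$). This is the step whose \emph{technical} weight is largest, but it rests on external black-box results rather than on the core argument.

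To finish, inequality (\ref{gmonot}) is a tangent-plane bound for the convex functional $F(u):=\tfrac{1}{p}[u]_{s,p}^{p}$. Since the integrand of (\ref{Gag}) is convex in $u$ for each $(x,y)$, $F$ is convex on $W_0^{s,p}(\Omega)$, with Gâteaux derivative $F'(u)[v]=\langle(-\Delta_p)^{s}u,v\rangle$. The convexity inequality
\[
\tfrac{1}{p}[\varphi]_{s,p}^{p}\geq\tfrac{1}{p}[u_n]_{s,p}^{p}+\langle(-\Delta_p)^{s}u_n,\varphi-u_n\rangle,\qquad\varphi\in W_0^{s,p}(\Omega),
\]
rearranged and combined with (\ref{weakun}) applied to the test function $u_n-\varphi$, yields (\ref{gmonot}) at once.
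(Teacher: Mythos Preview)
Your proof is correct, but it proceeds by a different mechanism than the paper's own proof of this proposition. The paper constructs $u_n$ via a Schauder fixed-point argument: it defines $T:L^p(\Omega)\to L^p(\Omega)$ sending $w$ to the unique minimizer of $\varphi\mapsto\tfrac{1}{p}[\varphi]_{s,p}^p-\int_\Omega\omega_n\varphi/(|w|+\tfrac{1}{n})^{\alpha}\,\mathrm{d}x$, proves $T$ is compact and continuous, and extracts a fixed point. Inequality (\ref{gmonot}) then drops out of the minimality property of $T(u_n)=u_n$. You instead minimize the full nonlinear energy $J_n$ directly, which is more economical here: it yields existence, nonnegativity, and the Euler--Lagrange identity in one stroke, and your tangent-plane derivation of (\ref{gmonot}) from convexity of $\tfrac{1}{p}[\cdot]_{s,p}^p$ is clean and equivalent to the paper's route. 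It is worth noting that the paper does essentially carry out your minimization argument later, in Section~\ref{Sec3}, to identify $u_n$ as the minimizer of the energy $E_n$ (which agrees with your $J_n$ up to an additive constant); so your approach is not foreign to the paper, just deployed at a different stage. The regularity and strict positivity step, which you flag as a black box, is handled in the paper by citing \cite{IMS} for $C^{\beta_s}(\overline{\Omega})$ regularity, \cite{BRASCO FRANZINA} for almost-everywhere positivity, and the nonlocal Harnack inequality of \cite{CKP} to upgrade to pointwise strict positivity; your sketch is consistent with this.
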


\begin{proof}
We will obtain $u_{n}$ as a fixed point of the operator $T:L^{p}%
(\Omega)\longrightarrow W_{0}^{s,p}(\Omega)\hookrightarrow L^{p}(\Omega)$ that
associates to each $w\in L^{p}(\Omega)$ the only weak solution $v=T(w)\in
W_{0}^{s,p}(\Omega)$ of the nonsingular Dirichlet problem%
\[
\left\{
\begin{array}
[c]{lcl}%
(-\Delta_{p})^{s}\,u=\dfrac{\omega_{n}}{(\left\vert w\right\vert +\frac{1}%
{n})^{\alpha}} & \mathrm{in} & \Omega\\
u=0 & \mathrm{on} & \mathbb{R}^{N}\setminus\Omega.
\end{array}
\right.
\]
The function $v$ is obtained through a direct minimization method applied to
the functional
\[
\varphi\in W_{0}^{s,p}(\Omega)\longmapsto\frac{1}{p}\left[  \varphi\right]
_{s,p}^{p}-%
{\displaystyle\int_{\Omega}}
\dfrac{\omega_{n}\varphi}{(\left\vert w\right\vert +\frac{1}{n})^{\alpha}%
}\mathrm{d}x,
\]
which is strictly convex and of class $C^{1}.$ Thus, $v$ is both the only
minimizer and the only critical point of this functional. Hence,%
\begin{equation}
\frac{1}{p}\left[  v\right]  _{s,p}^{p}-%
{\displaystyle\int_{\Omega}}
\dfrac{\omega_{n}v}{(\left\vert w\right\vert +\frac{1}{n})^{\alpha}}%
\mathrm{d}x\leq\frac{1}{p}\left[  \varphi\right]  _{s,p}^{p}-%
{\displaystyle\int_{\Omega}}
\dfrac{\omega_{n}\varphi}{(\left\vert w\right\vert +\frac{1}{n})^{\alpha}%
}\mathrm{d}x,\quad\forall\,\varphi\in W_{0}^{s,p}(\Omega) \label{aux6}%
\end{equation}
and%
\begin{equation}
\left\langle (-\Delta_{p})^{s}\,v,\varphi\right\rangle =%
{\displaystyle\int_{\Omega}}
\dfrac{\omega_{n}\varphi}{(\left\vert w\right\vert +\frac{1}{n})^{\alpha}%
}\mathrm{d}x,\quad\forall\,\varphi\in W_{0}^{s,p}(\Omega). \label{aux3}%
\end{equation}

It follows that%
\begin{equation}
\left[  v\right]  _{s,p}^{p}=\left\langle (-\Delta_{p})^{s}\,v,v\right\rangle
=%
{\displaystyle\int_{\Omega}}
\dfrac{\omega_{n}v}{(\left\vert w\right\vert +\frac{1}{n})^{\alpha}}%
\mathrm{d}x\leq n^{\alpha+1}\left\Vert v\right\Vert _{1}\leq S_{1}^{\frac
{1}{p}}n^{\alpha+1}\left[  v\right]  _{s,p}, \label{aux1}%
\end{equation}
where $S_{1}$ is a positive constant that is uniform with respect to $v$ (we
have used the continuity of the embedding $W_{0}^{s,p}(\Omega)\hookrightarrow
L^{1}(\Omega)$).

It follows from (\ref{aux1}) that%
\begin{equation}
\left[  T(w)\right]  _{s,p}\leq\left(  S_{1}^{\frac{1}{p}}n^{\alpha+1}\right)
^{\frac{1}{p-1}},\quad\forall\,w\in L^{p}(\Omega) \label{aux2}%
\end{equation}
and thus, by taking into account the compactness of the embedding $W_{0}%
^{s,p}(\Omega)\hookrightarrow L^{p}(\Omega),$ we conclude that the operator
$T:L^{p}(\Omega)\longrightarrow L^{p}(\Omega)$ is compact.

We are going to show, by contradiction, that $T$ is also continuous. Thus, we
assume that there exist $\epsilon>0$ and $w_{k}\rightarrow w$ in $L^{p}%
(\Omega)$ such that
\begin{equation}
\left\Vert v_{k}-v\right\Vert _{p}>\epsilon\quad\forall\,k\in\mathbb{N},
\label{contra}%
\end{equation}
where $v_{k}:=T(w_{k})$ and $v:=T(w).$ We can also assume, without loss of
generality, that $\left\vert w_{k}\right\vert \rightarrow\left\vert
w\right\vert $ almost everywhere in $\Omega$ (this comes from the convergence
in $L^{p}(\Omega)$).

It follows from (\ref{aux3}), with $\varphi=v_{k}-v,$ that%
\begin{align*}
\left\langle (-\Delta_{p})^{s}\,v_{k}-(-\Delta_{p})^{s}\,v,v_{k}%
-v\right\rangle  &  =%
{\displaystyle\int_{\Omega}}
(v_{k}-v)\left(  \dfrac{\omega_{n}}{(\left\vert w_{k}\right\vert +\frac{1}%
{n})^{\alpha}}-\dfrac{\omega_{n}}{(\left\vert w\right\vert +\frac{1}%
{n})^{\alpha}}\right)  \mathrm{d}x\\
&  =%
{\displaystyle\int_{\Omega}}
(v_{k}-v)\omega_{n}h_{k}\mathrm{d}x,
\end{align*}
where%
\[
h_{k}:=\dfrac{1}{(\left\vert w_{k}\right\vert +\frac{1}{n})^{\alpha}}%
-\dfrac{1}{(\left\vert w\right\vert +\frac{1}{n})^{\alpha}}.
\]

Therefore,
\begin{equation}
\left\vert \left\langle (-\Delta_{p})^{s}\,v_{k}-(-\Delta_{p})^{s}%
\,v,v_{k}-v\right\rangle \right\vert \leq n%
{\displaystyle\int_{\Omega}}
\left\vert v_{k}-v\right\vert \left\vert h_{k}\right\vert \mathrm{d}x\leq
n\left\Vert v_{k}-v\right\Vert _{p}\left\Vert h_{k}\right\Vert _{p^{\prime}}.
\label{aux4}%
\end{equation}
Since $\left\vert h_{k}\right\vert \leq n^{\alpha}$ and $\lim
\limits_{k\rightarrow\infty}\left\vert h_{k}\right\vert \rightarrow0$ almost
everywhere in $\Omega$, Dominated Convergence Theorem guarantees that%
\begin{equation}
\lim_{k\rightarrow\infty}\left\Vert h_{k}\right\Vert _{p^{\prime}}=0.
\label{aux5}%
\end{equation}

At this point we consider separately the cases $1<p<2$ and $p\geq2.$

\textbf{Case }$1<p<2.$ In this case, it follows from Lemma \ref{C+} and
(\ref{aux2}) that
\[
\left\langle (-\Delta_{p})^{s}\,v_{k}-(-\Delta_{p})^{s}\,v,v_{k}%
-v\right\rangle \geq\frac{C\left[  v_{k}-v\right]  _{s,p}^{2}}{\left(  \left[
v_{k}\right]  _{s,p}^{p}+\left[  v\right]  _{s,p}^{p}\right)  ^{\frac{2-p}{p}%
}}\geq\frac{CS_{p}^{\frac{2}{p}}\left\Vert v_{k}-v\right\Vert _{p}^{2}%
}{\left(  2\left(  S_{1}^{\frac{1}{p}}n^{\alpha+1}\right)  ^{\frac{p}{p-1}%
}\right)  ^{\frac{2-p}{p}}},
\]
that is,
\[
\left\langle (-\Delta_{p})^{s}\,v_{k}-(-\Delta_{p})^{s}\,v,v_{k}%
-w\right\rangle \geq C_{n}\left\Vert v_{k}-v\right\Vert _{p}^{2}%
\]
where the positive constant $C_{n}$ does not depend on $k.$

After combining this inequality with (\ref{aux4}) and (\ref{aux5}) we obtain%
\[
\lim_{k\rightarrow\infty}\left\Vert v_{k}-v\right\Vert _{p}\leq\frac{n}{C_{n}%
}\lim_{k\rightarrow\infty}\left\Vert h_{k}\right\Vert _{p^{\prime}}=0,
\]
which contradicts (\ref{contra}).

\textbf{Case }$p\geq2.$ In this case Lemma \ref{C+} and (\ref{aux4}) yield
\begin{align*}
CS_{p}\left\Vert v_{k}-v\right\Vert _{p}^{p}  &  \leq C\left[  v_{k}-v\right]
_{s,p}^{p}\\
&  \leq\left\langle (-\Delta_{p})^{s}\,v_{k}-(-\Delta_{p})^{s}\,v,v_{k}%
-v\right\rangle \leq n\left\Vert v_{k}-v\right\Vert _{p}\left\Vert
h_{k}\right\Vert _{p^{\prime}}.
\end{align*}
Hence, after using (\ref{aux5}) we arrive at
\[
\lim_{k\rightarrow\infty}\left\Vert v_{k}-v\right\Vert _{p}\leq\lim
_{k\rightarrow\infty}\left(  \frac{n}{CS_{p}}\left\Vert h_{k}\right\Vert
_{p^{\prime}}\right)  ^{\frac{1}{p-1}}=0,
\]
which also contradicts (\ref{contra}).

We have proved that $T:L^{p}(\Omega)\longrightarrow L^{p}(\Omega)$ is compact
and continuous. Moreover, (\ref{aux2}) implies that $T$ leaves invariant the
ball $\left\{  w\in L^{p}(\Omega):\left\Vert w\right\Vert _{p}\leq\left(
S_{1}n^{\alpha+1}\right)  ^{\frac{1}{p-1}}\right\}  .$ Therefore, by applying
Schauder's Fixed Point Theorem we conclude that $T$ has a fixed point $u_{n}$
in this ball. Of course,
\[
\left\{
\begin{array}
[c]{lcl}%
(-\Delta_{p})^{s}\,u_{n}=\dfrac{\omega_{n}}{(\left\vert u_{n}\right\vert
+\frac{1}{n})^{\alpha}} & \mathrm{in} & \Omega\\
u_{n}=0 & \mathrm{on} & \mathbb{R}^{N}\setminus\Omega
\end{array}
\right.
\]
in the weak sense.

Since the right-hand term of the above equation is nonnegative and belongs to
$L^{\infty}(\Omega)$, we can apply the comparison principle for the fractional
$p$-Laplacian (see \cite[Lemma 9]{LindLind}) and the main result of \cite{IMS}
to conclude, respectively, that $u_{n}$ is nonnegative and belongs to
$C^{\beta_{s}}(\overline{\Omega})$ for some $\beta_{s}\in(0,s]$ ($\beta_{s}$
does not depend neither on $\alpha$ nor on $n$).

It follows from \cite[Theorem A.1]{BRASCO FRANZINA} that $u_{n}>0$ almost
everywhere in $\Omega.$ Let us show, by employing a nonlocal Harnack
inequality proved in \cite{CKP}, that $u_{n}(x)>0$ for all $x\in\Omega.$
Suppose, by the way of contradiction, that $u_{n}(x_{0})=0$ for some $x_{0}%
\in\Omega.$ According Lemma 4.1 of \cite{CKP}, there exist positive constants
$\epsilon$ and $c$ (with $0<\epsilon<1\leq c$) such that
\[
\left(  \frac{1}{\left\vert B(x_{0})\right\vert }%
{\displaystyle\int_{B(x_{0})}}
(u_{n})^{\epsilon}\mathrm{d}x\right)  ^{\frac{1}{\epsilon}}\leq c\inf
_{B(x_{0})}u_{n}%
\]
where $B(x_{0})$ denotes a ball centered at $x_{0}$ and contained in $\Omega.$
Since, $\inf_{B(x_{0})}u_{n}=u_{n}(x_{0})=0$ the above inequality implies that
$u$ is identically null in $B(x_{0}),$ contradicting thus the fact that $u>0$
almost everywhere.

In order to prove the uniqueness of $u_{n}$ we assume that $v_{i},$
$i\in\left\{  1,2\right\}  ,$ satisfies%
\[
\left\{
\begin{array}
[c]{lcl}%
(-\Delta_{p})^{s}\,v_{i}=\dfrac{\omega_{n}}{(v_{i}+\frac{1}{n})^{\alpha}} &
\mathrm{in} & \Omega\\
v_{i}\geq0 & \mathrm{in} & \Omega\\
v_{i}=0 & \mathrm{on} & \mathbb{R}^{N}\setminus\Omega.
\end{array}
\right.
\]
Then,%
\[
\left\langle (-\Delta_{p})^{s}\,v_{1}-(-\Delta_{p})^{s}\,v_{2},v_{1}%
-v_{2}\right\rangle =%
{\displaystyle\int_{\Omega}}
(v_{1}-v_{2})\left(  \dfrac{\omega_{n}}{(v_{1}+\frac{1}{n})^{\alpha}}%
-\dfrac{\omega_{n}}{(v_{2}+\frac{1}{n})^{\alpha}}\right)  \mathrm{d}x\leq0,
\]
since the integrand of the right-hand term is not positive in $\Omega.$

On the other hand, by applying Lemma \ref{C+}, we conclude that $\left[
v_{1}-v_{2}\right]  _{s,p}=0,$ showing that $v_{1}=v_{2}$ almost everywhere in
$\Omega.$

We finish this proof by observing that (\ref{gmonot}) follows directly from
(\ref{aux6}), with $w=u_{n}$ and $v=T(u_{n})=u_{n}:$
\[
\frac{1}{p}\left[  u_{n}\right]  _{s,p}^{p}-%
{\displaystyle\int_{\Omega}}
\dfrac{\omega_{n}u_{n}}{(u_{n}+\frac{1}{n})^{\alpha}}\mathrm{d}x\leq\frac
{1}{p}\left[  \varphi\right]  _{s,p}^{p}-%
{\displaystyle\int_{\Omega}}
\dfrac{\omega_{n}\varphi}{(u_{n}+\frac{1}{n})^{\alpha}}\mathrm{d}%
x,\quad\forall\,\varphi\in W_{0}^{s,p}(\Omega).
\]

\end{proof}

\begin{proposition}
\label{unmonot}The sequences $\left\{  u_{n}\right\}  _{n\in\mathbb{N}}\subset
W_{0}^{s,p}(\Omega)$ and $\left\{  \left[  u_{n}\right]  _{s,p}\right\}
_{n\in\mathbb{N}}\subset(0,\infty)$ are nondecreasing, that is%
\[
u_{n}\leq u_{n+1}\quad\mathrm{in}\quad\Omega,\quad\mathrm{and}\quad\left[
u_{n}\right]  _{s,p}\leq\left[  u_{n+1}\right]  _{s,p},\quad\forall
\,n\in\mathbb{N}.
\]

\end{proposition}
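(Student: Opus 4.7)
The plan is to prove the two monotonicity statements in sequence: first the pointwise inequality $u_n \leq u_{n+1}$ in $\Omega$, and then the semi-norm inequality $[u_n]_{s,p} \leq [u_{n+1}]_{s,p}$ as a quick corollary of the variational inequality (\ref{gmonot}) already established in Proposition \ref{un}.

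For the pointwise comparison, I would use the admissible test function $\varphi := (u_n - u_{n+1})_+ \in W_0^{s,p}(\Omega)$ in both weak formulations (\ref{weakun}) and subtract. The point is that on the set $\{u_n > u_{n+1}\}$ one has $\omega_n \leq \omega_{n+1}$ and $u_n + \tfrac{1}{n} > u_{n+1} + \tfrac{1}{n+1}$, so
\[
\frac{\omega_n}{(u_n + \tfrac{1}{n})^{\alpha}} \leq \frac{\omega_{n+1}}{(u_{n+1} + \tfrac{1}{n+1})^{\alpha}} \quad \text{there,}
\]
while $\varphi$ vanishes off this set. Consequently
\[
\langle (-\Delta_p)^s u_n - (-\Delta_p)^s u_{n+1}, \varphi \rangle \leq 0.
\]
Now Lemma \ref{lemma+} applied with $v_1 = u_n$ and $v_2 = u_{n+1}$ bounds the same pairing from below by $(p-1) \int\int_{\mathbb{R}^{2N}} \frac{|\widetilde{\varphi}|^2 Q}{|x-y|^{N+sp}} dx dy \geq 0$, forcing $|\widetilde{\varphi}|^2 Q = 0$ almost everywhere. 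A short analysis of the weight $Q$ in (\ref{Q}) shows that $Q(x,y)$ can vanish only where both $\widetilde{u_n}(x,y)$ and $\widetilde{u_{n+1}}(x,y)$ are zero, and on that set $\widetilde{\varphi}(x,y)$ also vanishes automatically. Hence $\widetilde{\varphi} \equiv 0$ a.e.\ in $\mathbb{R}^{2N}$, and since $\varphi$ vanishes on the infinite-measure set $\mathbb{R}^N \setminus \Omega$, Fubini forces $\varphi \equiv 0$, i.e.\ $u_n \leq u_{n+1}$.

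Once the pointwise monotonicity is established, the semi-norm inequality is immediate: setting $\varphi = u_{n+1}$ in (\ref{gmonot}) yields
\[
[u_n]_{s,p}^p \leq [u_{n+1}]_{s,p}^p + p \int_\Omega \frac{\omega_n (u_n - u_{n+1})}{(u_n + \tfrac{1}{n})^{\alpha}} \mathrm{d}x,
\]
and the integral on the right is nonpositive since $u_n \leq u_{n+1}$.

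The main technical point is the deduction $\widetilde{\varphi} = 0$ a.e.\ from $|\widetilde{\varphi}|^2 Q = 0$ a.e.; this should be checked separately in the cases $p \geq 2$ (where $|r|^{p-2}$ is continuous and vanishes only at $r = 0$) and $1 < p < 2$ (where $|r|^{p-2}$ has an integrable singularity at $r = 0$, forcing $Q > 0$ everywhere). Everything else is a direct combination of the monotonicity of the right-hand side in $n$, the lattice property of $W_0^{s,p}(\Omega)$, and the variational inequality (\ref{gmonot}).
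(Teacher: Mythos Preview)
Your proposal is correct and follows essentially the same route as the paper: test with $(u_n-u_{n+1})_+$, combine the sign of the right-hand side with Lemma~\ref{lemma+} to force $|\widetilde{\varphi_+}|^2 Q\equiv 0$, analyze the zero set of $Q$ to conclude $\varphi_+\equiv 0$, and then read off $[u_n]_{s,p}\leq[u_{n+1}]_{s,p}$ from (\ref{gmonot}) with $\varphi=u_{n+1}$. Your explicit split into the cases $p\geq 2$ and $1<p<2$ when discussing the vanishing of $Q$ is a minor refinement; the paper treats this in one sentence without distinguishing cases.
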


\begin{proof}
Let $\varphi:=u_{n}-u_{n+1}.$ It follows from (\ref{weakun}) that%
\[
\left\langle (-\Delta_{p})^{s}\,u_{n}-(-\Delta_{p})^{s}\,u_{n+1},\varphi
_{+}\right\rangle =%
{\displaystyle\int_{\Omega}}
\dfrac{\omega_{n}\varphi_{+}}{(u_{n}+\frac{1}{n})^{\alpha}}-\dfrac
{\omega_{n+1}\varphi_{+}}{(u_{n+1}+\frac{1}{n+1})^{\alpha}}\mathrm{d}x.
\]

Since
\[
0\leq\omega_{n}(x)=\min\left\{  \omega(x),n\right\}  \leq\min\left\{
\omega(x),n+1\right\}  =\omega_{n+1}(x)
\]
we have $\omega_{n}\varphi_{+}\leq\omega_{n+1}\varphi_{+}$ and, hence,%
\begin{equation}
\left\langle (-\Delta_{p})^{s}\,u_{n}-(-\Delta_{p})^{s}\,u_{n+1},\varphi
_{+}\right\rangle \leq%
{\displaystyle\int_{\Omega}}
\left(  \dfrac{\omega_{n+1}\varphi_{+}}{(u_{n}+\frac{1}{n})^{\alpha}}%
-\dfrac{\omega_{n+1}\varphi_{+}}{(u_{n+1}+\frac{1}{n+1})^{\alpha}}\right)
\mathrm{d}x\leq0. \label{aux7}%
\end{equation}
since the integrand above is not positive.

On the other hand, it follows from Lemma \ref{lemma+} that
\begin{equation}
\left\langle (-\Delta_{p})^{s}\,u_{n}-(-\Delta_{p})^{s}\,u_{n+1},\varphi
_{+}\right\rangle \geq(p-1)\int\int_{\mathbb{R}^{2N}}\dfrac{\left\vert
\varphi_{+}(x)-\varphi_{+}(y)\right\vert ^{2}Q(x,y)}{\left\vert x-y\right\vert
^{N+sp}}\mathrm{d}x\mathrm{d}y\geq0, \label{aux8}%
\end{equation}
where%
\[
Q(x,y)=\int_{0}^{1}\left\vert \widetilde{u_{n+1}}(x,y)+t(\widetilde{u_{n}%
}(x,y)-\widetilde{u_{n+1}}(x,y))\right\vert ^{p-2}\mathrm{d}t\geq0.
\]
Note that $Q(x,y)=0$ implies that $\widetilde{u_{n+1}}(x,y)=\widetilde{u_{n}%
}(x,y)=0,$ a pair of equalities that lead to $\varphi(x)=\varphi(y).$

After comparing (\ref{aux8}) with (\ref{aux7}) we can conclude that%
\[
\left\vert \varphi_{+}(x)-\varphi_{+}(y)\right\vert ^{2}Q(x,y)=0
\]
at almost every point $(x,y)\in\mathbb{R}^{2N},$ implying that $\varphi
_{+}(x)=\varphi_{+}(y)$ at almost every point $(x,y).$ Since $\varphi$ is zero
out of $\Omega,$ this fact implies that $\varphi_{+}=0$ almost everywhere.
That is, $u_{n}-u_{n+1}\leq0$ almost everywhere.

The second conclusion follows then from (\ref{gmonot}) with $\varphi=u_{n+1}:$%
\[
\left[  u_{n}\right]  _{s,p}^{p}\leq\left[  u_{n+1}\right]  _{s,p}^{p}+p%
{\displaystyle\int_{\Omega}}
\dfrac{\omega_{n}(u_{n}-u_{n+1})}{(u_{n}+\frac{1}{n})^{\alpha}}\mathrm{d}%
x\leq\left[  u_{n+1}\right]  _{s,p}^{p}.
\]

\end{proof}

In what follows $\psi\in W_{0}^{s,p}(\Omega)$ is such that
\begin{equation}
\left\{
\begin{array}
[c]{lcl}%
\left(  -\Delta_{p}\right)  ^{s}\psi=\omega_{1} & \mathrm{in} & \Omega\\
\psi=0 & \mathrm{on} & \mathbb{R}^{N}\setminus\Omega.
\end{array}
\right.  \label{Psi}%
\end{equation}
Since $0\leq\omega_{1}=\min\left\{  \omega,1\right\}  \in L^{\infty}%
(\Omega)\diagdown\left\{  0\right\}  $ we can check that $\psi\in C^{\beta
_{s}}(\overline{\Omega})\ $for some $\beta_{s}\in(0,s]$ and that $\psi(x)>0$
$\forall\,x\in\Omega.$ (See arguments in the proof of Proposition \ref{un},
based on \cite{BRASCO FRANZINA,CKP,IMS}).

\begin{proposition}
\label{lowbound}Let $u_{n}\in W_{0}^{s,p}(\Omega)$ be the weak solution of
(\ref{weakun}), with $\alpha>0,$ and $\omega\in L^{1}(\Omega)\diagdown\left\{
0\right\}  ,\;\omega\geq0.$ We have%
\[
0<m_{\alpha}\psi\leq u_{1}\leq u_{n},\quad\forall\,n\in\mathbb{N},
\]
where
\[
m_{\alpha}:=(\left\Vert u_{1}\right\Vert _{\infty}+1)^{-\frac{\alpha}{p-1}}.
\]

\end{proposition}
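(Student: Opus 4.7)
The chain $u_1 \le u_n$ for all $n \in \mathbb{N}$ is immediate from Proposition \ref{unmonot}, so the only substantive point is the lower bound $m_\alpha \psi \le u_1$. Strict positivity $m_\alpha \psi > 0$ in $\Omega$ follows at once from $\psi > 0$ in $\Omega$ (stated in the paragraph defining $\psi$, via the arguments of Proposition \ref{un}) together with $m_\alpha > 0$, and Theorem \ref{global} (applied to $u_1$, whose equation $(-\Delta_p)^s u_1 = \omega_1/(u_1+1)^\alpha$ has bounded right-hand side since $\omega_1 \in L^\infty(\Omega)$) ensures $\|u_1\|_\infty$ is finite, so $m_\alpha$ is a legitimate positive constant.

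The plan for $m_\alpha \psi \le u_1$ is a weak-form comparison. Since $\psi$ solves $(-\Delta_p)^s \psi = \omega_1$, the $(p-1)$-homogeneity of the fractional $p$-Laplacian yields
\[
(-\Delta_p)^s(m_\alpha \psi) = m_\alpha^{p-1}\,\omega_1 = (\|u_1\|_\infty + 1)^{-\alpha}\,\omega_1
\]
in the weak sense. Because $0 \le u_1 \le \|u_1\|_\infty$ a.e.\ in $\Omega$, we have the pointwise bound
\[
m_\alpha^{p-1}\,\omega_1 \;=\; \frac{\omega_1}{(\|u_1\|_\infty+1)^\alpha} \;\le\; \frac{\omega_1}{(u_1+1)^\alpha} \quad \text{a.e.\ in }\Omega,
\]
so that, recalling the equation (\ref{weakun}) satisfied by $u_1$,
\[
\langle (-\Delta_p)^s(m_\alpha \psi) - (-\Delta_p)^s u_1,\,\varphi\rangle \;\le\; 0 \qquad \forall\,\varphi \in W_0^{s,p}(\Omega),\ \varphi \ge 0.
\]

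Now take the nonnegative test function $\varphi = (m_\alpha \psi - u_1)_+ \in W_0^{s,p}(\Omega)$. On one hand the preceding inequality gives
\[
\langle (-\Delta_p)^s(m_\alpha \psi) - (-\Delta_p)^s u_1,\,(m_\alpha\psi-u_1)_+\rangle \le 0.
\]
On the other hand, Lemma \ref{lemma+} applied to $v_1 = m_\alpha \psi$ and $v_2 = u_1$ (so that $v = m_\alpha\psi - u_1$) shows that this same quantity is bounded below by
\[
(p-1)\int\!\!\int_{\mathbb{R}^{2N}}\frac{|(m_\alpha\psi - u_1)_+(x) - (m_\alpha\psi - u_1)_+(y)|^2\,Q(x,y)}{|x-y|^{N+sp}}\,\mathrm{d}x\mathrm{d}y \;\ge\; 0.
\]
Combining the two inequalities forces the integrand to vanish a.e., and then the same argument used at the end of the proof of Proposition \ref{unmonot} (noting that $Q(x,y)=0$ implies $\widetilde{m_\alpha\psi}(x,y)=\widetilde{u_1}(x,y)=0$, and that $(m_\alpha\psi - u_1)_+$ is supported in $\overline{\Omega}$) yields $(m_\alpha\psi - u_1)_+ \equiv 0$, i.e.\ $m_\alpha\psi \le u_1$ a.e.\ in $\Omega$.

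There is no real obstacle here; the argument is a standard weak comparison, with the only calibration being the choice of $m_\alpha$ so that the $L^\infty$ bound on $u_1$ converts the homogeneity factor $m_\alpha^{p-1}$ into a pointwise domination of the two right-hand sides. The previously established tools---homogeneity of $(-\Delta_p)^s$, the inequality of Lemma \ref{lemma+}, and the cancellation trick from Proposition \ref{unmonot}---do all the work.
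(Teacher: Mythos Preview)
Your proof is correct and follows essentially the same approach as the paper: both establish the weak-form inequality $\langle (-\Delta_p)^s(m_\alpha\psi),\varphi\rangle \le \langle (-\Delta_p)^s u_1,\varphi\rangle$ for nonnegative $\varphi$ and then appeal to comparison. The only cosmetic difference is that the paper invokes the comparison principle for the fractional $p$-Laplacian as a black box (citing \cite[Lemma~9]{LindLind}), whereas you unwrap it in place via Lemma~\ref{lemma+} and the cancellation argument from Proposition~\ref{unmonot}---which is precisely the content of that cited lemma. Also, the finiteness of $\|u_1\|_\infty$ is already part of Proposition~\ref{un} (indeed $u_1\in C^{\beta_s}(\overline{\Omega})$), so you need not invoke Theorem~\ref{global} for this.
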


\begin{proof}
Let $\varphi$ be any nonnegative function in $W_{0}^{s,p}(\Omega).$ Then,
\begin{align*}
\left\langle (-\Delta_{p})^{s}\,u_{1},\varphi\right\rangle  &  =%
{\displaystyle\int_{\Omega}}
\dfrac{\omega_{1}\varphi}{(u_{1}+1)^{\alpha}}\mathrm{d}x\\
&  \geq\dfrac{1}{(\left\Vert u_{1}\right\Vert _{\infty}+1)^{\alpha}}%
{\displaystyle\int_{\Omega}}
\omega_{1}\varphi\mathrm{d}x=\left\langle (-\Delta_{p})^{s}\,m_{\alpha}%
\psi,\varphi\right\rangle .\,
\end{align*}
\ 

It follows from the comparison principle for the fractional $p$-Laplacian that
$m_{\alpha}\psi\leq u_{1}.$ This concludes the proof since $u_{1}\leq u_{n}$
for all $n\in\mathbb{N}.$
\end{proof}

The following corollary is immediate since $\psi$ is strictly positive in
$\Omega$ and continuous in $\overline{\Omega}.$

\begin{corollary}
\label{cc}Let $\Omega^{\prime}$ be an arbitrary subdomain compactly contained
in $\Omega.$ There exists a positive constant $C_{\Omega^{\prime}},$ that does
not depend on $n,$ such that
\[
C_{\Omega^{\prime}}\leq u_{n}(x),\quad\forall\,x\in\Omega^{\prime}.
\]

\end{corollary}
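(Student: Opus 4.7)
The plan is to reduce this directly to Proposition \ref{lowbound}, which already gives us the pointwise lower bound $u_n(x) \geq u_1(x) \geq m_\alpha \psi(x)$ for every $n \in \mathbb{N}$ and every $x \in \Omega$. So the whole content of the corollary is just that $\psi$ is bounded away from zero on any compact subset of $\Omega$.

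First, I would recall that $\psi \in C^{\beta_s}(\overline{\Omega})$ by the regularity result from \cite{IMS} applied to problem (\ref{Psi}), and that $\psi(x) > 0$ for every $x \in \Omega$ by the strong positivity argument based on \cite{BRASCO FRANZINA} and the nonlocal Harnack inequality of \cite{CKP}, exactly as invoked in the proof of Proposition \ref{un}. In particular $\psi$ is continuous on $\overline{\Omega}$.

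Next, since $\Omega'$ is compactly contained in $\Omega$, the closure $\overline{\Omega'}$ is a compact subset of $\Omega$ on which the continuous function $\psi$ is strictly positive. By compactness, $\psi$ attains its minimum on $\overline{\Omega'}$, and this minimum is a strictly positive number; call it $m_{\Omega'} := \min_{\overline{\Omega'}} \psi > 0$. Setting
\[
C_{\Omega'} := m_\alpha \, m_{\Omega'} = (\Vert u_1\Vert_\infty + 1)^{-\frac{\alpha}{p-1}} \min_{\overline{\Omega'}} \psi > 0,
\]
Proposition \ref{lowbound} immediately yields $u_n(x) \geq u_1(x) \geq m_\alpha \psi(x) \geq C_{\Omega'}$ for all $x \in \Omega'$ and all $n \in \mathbb{N}$. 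The constant $C_{\Omega'}$ depends on $\Omega'$ (through $m_{\Omega'}$) and on $\alpha, p, \omega_1$ (through $m_\alpha$ and $\Vert u_1\Vert_\infty$), but crucially not on $n$, which is precisely what is claimed.

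There is no real obstacle here; the only subtle point is making sure that continuity of $\psi$ up to $\overline{\Omega}$ (not merely in $\Omega$) is justified, and this is already supplied by the $C^{\beta_s}(\overline{\Omega})$ regularity noted just before the statement. Everything else is a one-line consequence of Proposition \ref{lowbound}.
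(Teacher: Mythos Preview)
Your proof is correct and follows exactly the approach the paper intends: the paper states the corollary as ``immediate since $\psi$ is strictly positive in $\Omega$ and continuous in $\overline{\Omega}$,'' and you have simply spelled out that immediate argument by combining Proposition~\ref{lowbound} with the compactness of $\overline{\Omega'}$. There is nothing to add or correct.
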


Taking into account the monotonicity of the sequence $\left\{  u_{n}\right\}
_{n\in\mathbb{N}},$ let us define, for each $\alpha>0,$ the function
$u_{\alpha}:\overline{\Omega}\rightarrow\lbrack0,\infty]$ by
\begin{equation}
u_{\alpha}(x):=\lim_{n\rightarrow\infty}u_{n}(x)=\sup_{n\in\mathbb{N}}%
u_{n}(x). \label{ualfa}%
\end{equation}

We anticipate that $u_{\alpha}(x)<\infty$ for almost every $x\in\Omega$ (see
Remark \ref{ufinite}).

\begin{proposition}
\label{gaglibound}Let $\alpha>0$ and $\omega\in L^{1}(\Omega)\diagdown\left\{
0\right\}  ,\;\omega\geq0.$ If the sequence $\left\{  u_{n}\right\}
_{n\in\mathbb{N}}$ is bounded in $W_{0}^{s,p}(\Omega),$ then it converges in
$W_{0}^{s,p}(\Omega)$ to $u_{\alpha}$ and this function is the weak solution
of (\ref{palfa}).
\end{proposition}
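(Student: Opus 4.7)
The plan is to identify $u_\alpha$ first as the weak limit of $\{u_n\}$ in $W_0^{s,p}(\Omega)$, then upgrade to strong convergence using the monotonicity of $\{[u_n]_{s,p}\}$ from Proposition~\ref{unmonot} combined with inequality~(\ref{gmonot}) tested with $u_\alpha$, and finally pass to the limit in the weak formulation~(\ref{weakun}) to identify $u_\alpha$ with the weak solution of~(\ref{palfa}).

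First I would exploit the uniform bound on $[u_n]_{s,p}$: by reflexivity of $W_0^{s,p}(\Omega)$, some subsequence converges weakly in $W_0^{s,p}(\Omega)$ to some $\widetilde u$, and by compactness, strongly in $L^{p}(\Omega)$ and pointwise a.e.\ along a further subsequence. Since $u_n(x)\nearrow u_\alpha(x)$ for every $x$ by definition~(\ref{ualfa}), the pointwise limit is $u_\alpha$, so $\widetilde u = u_\alpha\in W_0^{s,p}(\Omega)$. The same argument applied to any weakly convergent subsequence identifies the limit as $u_\alpha$, whence $u_n\rightharpoonup u_\alpha$ in $W_0^{s,p}(\Omega)$, $u_n\to u_\alpha$ in every $L^{r}(\Omega)$ with $r<p_s^{\star}$, and pointwise a.e. Weak lower semicontinuity then yields $[u_\alpha]_{s,p}\leq\liminf_n [u_n]_{s,p}$.

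To upgrade to strong convergence I would apply~(\ref{gmonot}) with $\varphi = u_\alpha$. Since $u_n\leq u_\alpha$ in $\Omega$ by Proposition~\ref{unmonot} and $\omega_n\geq 0$, the integrand on the right of~(\ref{gmonot}) is nonpositive, whence $[u_n]_{s,p}^{p}\leq [u_\alpha]_{s,p}^{p}$. Combining with the semicontinuity bound yields $[u_n]_{s,p}\to [u_\alpha]_{s,p}$, and the uniform convexity of $W_0^{s,p}(\Omega)$ converts weak convergence plus norm convergence into strong convergence $u_n\to u_\alpha$ in $W_0^{s,p}(\Omega)$.

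For the passage to the limit, fix $\varphi\in C_c^{\infty}(\Omega)$. Strong convergence of $u_n$ together with Lemma~\ref{proputil} and H\"older's inequality applied to the Gagliardo integrals gives $\langle(-\Delta_p)^{s}u_n,\varphi\rangle\to\langle(-\Delta_p)^{s}u_\alpha,\varphi\rangle$. For the singular right-hand side, Corollary~\ref{cc} supplies a constant $C_K>0$ with $u_n\geq C_K$ on $K:=\mathrm{supp}\,\varphi$ uniformly in $n$, so
\[
\left|\frac{\omega_n\varphi}{(u_n+\tfrac{1}{n})^{\alpha}}\right|\leq\frac{\|\varphi\|_\infty\,\omega}{C_K^{\alpha}}\in L^{1}(\Omega),
\]
and dominated convergence delivers the distributional identity~(\ref{distr}) for $u_\alpha$. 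Proposition~\ref{lowbound} together with the strict positivity of $\psi$ on compact subdomains then verifies condition~(i) of Definition~\ref{weaksol}. The main obstacle is the strong-convergence step: both the upper bound $[u_n]_{s,p}^{p}\leq [u_\alpha]_{s,p}^{p}$ (which hinges on the admissibility of $u_\alpha$ as test in~(\ref{gmonot}) together with the sign of $u_n-u_\alpha$) and the uniform convexity of the Gagliardo norm are essential, and this is precisely where the monotonicity $[u_n]_{s,p}\leq[u_{n+1}]_{s,p}$ flagged in the introduction pays off.
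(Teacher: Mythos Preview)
Your proposal is correct and follows essentially the same route as the paper: weak compactness identifies the limit as $u_\alpha$, inequality~(\ref{gmonot}) tested with $\varphi=u_\alpha$ gives $[u_n]_{s,p}^p\leq[u_\alpha]_{s,p}^p$, this is combined with weak lower semicontinuity to get norm convergence (hence strong convergence by uniform convexity), and Corollary~\ref{cc} justifies the limit passage in the singular term. The only cosmetic difference is that the paper invokes the monotonicity of $\{[u_n]_{s,p}\}$ to pass from the subsequence to the full sequence, whereas you argue via uniqueness of the weak limit; in your version the norm monotonicity is in fact not needed, so your closing remark that it ``pays off'' is a slight overstatement.
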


\begin{proof}
We note that the condition \textrm{(i)} of Definition \ref{weaksol} is
fulfilled, according to Corollary \ref{cc}. Thus, we need to check the
condition \textrm{(ii).}

The boundedness of $\left\{  u_{n}\right\}  _{n\in\mathbb{N}}$ implies that
there exists a subsequence $\left\{  u_{n_{k}}\right\}  _{k\in\mathbb{N}}$
converging to a function $u,$ weakly in $W_{0}^{s,p}(\Omega)$ and pointwise
almost everywhere. This implies that $u=u_{\alpha}$ almost everywhere, so that
$u_{\alpha}\in W_{0}^{s,p}(\Omega).$

Thus, by applying (\ref{gmonot}) with $\varphi=u_{\alpha}$ we obtain
\[
\left[  u_{n}\right]  _{s,p}^{p}\leq\left[  u_{\alpha}\right]  _{s,p}^{p}+p%
{\displaystyle\int_{\Omega}}
\dfrac{\omega_{n}(u_{n}-u_{\alpha})}{(u_{n}+\frac{1}{n})^{\alpha}}%
\mathrm{d}x\leq\left[  u_{\alpha}\right]  _{s,p}^{p}.
\]

Combining this fact with the monotonicity of $\left\{  \left[  u_{n}\right]
_{s,p}^{p}\right\}  _{k\in\mathbb{N}}$ we get%
\[
\lim_{n\rightarrow\infty}\left[  u_{n}\right]  _{s,p}^{p}=\lim_{k\rightarrow
\infty}\left[  u_{n_{k}}\right]  _{s,p}^{p}\leq\left[  u_{\alpha}\right]
_{s,p}^{p}\leq\lim_{k\rightarrow\infty}\left[  u_{n_{k}}\right]  _{s,p}%
^{p},\quad\forall\,n\in\mathbb{N},
\]
where the latter inequality stems from the weak convergence $u_{n_{k}%
}\rightharpoonup u_{\alpha}.$

We have concluded that
\[
\left[  u_{\alpha}\right]  _{s,p}=\lim_{k\rightarrow\infty}\left[  u_{n_{k}%
}\right]  _{s,p}=\lim_{n\rightarrow\infty}\left[  u_{n}\right]  _{s,p}%
\]
and hence we obtain the strong convergence $u_{n}\rightarrow u_{\alpha}$.

This convergence and the Corollary \ref{cc} allow us to pass to the limit,
when $n\rightarrow\infty,$ in
\[
\left\langle (-\Delta_{p})^{s}\,u_{n},\varphi\right\rangle =%
{\displaystyle\int_{\Omega}}
\dfrac{\omega_{n}\varphi}{(u_{n}+\frac{1}{n})^{\alpha}}\mathrm{d}%
x,\quad\forall\,\varphi\in C_{c}^{\infty}(\Omega)
\]
in order to obtain%
\[
\left\langle (-\Delta_{p})^{s}\,u_{\alpha},\varphi\right\rangle =%
{\displaystyle\int_{\Omega}}
\dfrac{\omega\varphi}{(u_{\alpha})^{\alpha}}\mathrm{d}x,\quad\forall
\,\varphi\in C_{c}^{\infty}(\Omega).
\]
This concludes the proof that $u_{\alpha}$ is a weak solution of (\ref{palfa}).
\end{proof}

The next result is a reciprocal of Proposition \ref{gaglibound}.

\begin{proposition}
Let $\alpha>0$ and $\omega\in L^{1}(\Omega)\diagdown\left\{  0\right\}
,\;\omega\geq0.$ Suppose that $u\in W_{0}^{s,p}(\Omega)$ is a weak solution of
(\ref{palfa}). Then, $\left\{  u_{n}\right\}  _{n\in\mathbb{N}}$ converges in
$W_{0}^{s,p}(\Omega)$ to $u$ and $u=u_{\alpha}$.
\end{proposition}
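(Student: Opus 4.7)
The plan is to reduce the claim to the previous proposition by proving the pointwise comparison $u_{n}\leq u$ a.e.\ in $\Omega$, which in turn yields a uniform Gagliardo bound on $\{u_{n}\}$; then uniqueness forces $u=u_{\alpha}$.

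First, I would establish that $u_{n}\leq u$ almost everywhere in $\Omega$ for every $n\in\mathbb{N}$. Since $u,u_{n}\in W_{0}^{s,p}(\Omega)$, the function $(u_{n}-u)_{+}$ lies in $W_{0}^{s,p}(\Omega)$ and can be used as a test function in both (\ref{weakun}) and (via Proposition \ref{wtest}) in the equation satisfied by $u$. Subtracting,
\[
\left\langle (-\Delta_{p})^{s}u_{n}-(-\Delta_{p})^{s}u,\,(u_{n}-u)_{+}\right\rangle = \int_{\Omega}\left(\frac{\omega_{n}}{(u_{n}+\tfrac{1}{n})^{\alpha}}-\frac{\omega}{u^{\alpha}}\right)(u_{n}-u)_{+}\mathrm{d}x.
\]
On the set $\{u_{n}>u\}$ one has $\omega_{n}\leq\omega$ and, since $u_{n}+\tfrac{1}{n}>u>0$ (the strict positivity of $u$ comes from condition (i) of Definition \ref{weaksol}), also $(u_{n}+\tfrac{1}{n})^{-\alpha}\leq u^{-\alpha}$; hence the right-hand side is nonpositive. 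On the other hand, Lemma \ref{lemma+} shows the left-hand side is nonnegative. Thus both sides vanish, and the argument at the end of Proposition \ref{unmonot}, based on the kernel $Q$, forces $(u_{n}-u)_{+}\equiv 0$.

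Next, I would feed this comparison into the variational inequality (\ref{gmonot}) of Proposition \ref{un}. Taking $\varphi=u$ there gives
\[
\left[u_{n}\right]_{s,p}^{p}\leq\left[u\right]_{s,p}^{p}+p\int_{\Omega}\frac{\omega_{n}(u_{n}-u)}{(u_{n}+\tfrac{1}{n})^{\alpha}}\mathrm{d}x\leq\left[u\right]_{s,p}^{p},
\]
because $u_{n}-u\leq 0$ and the integrand is nonpositive. Therefore $\{u_{n}\}$ is bounded in $W_{0}^{s,p}(\Omega)$, and Proposition \ref{gaglibound} applies: the sequence $\{u_{n}\}$ converges strongly in $W_{0}^{s,p}(\Omega)$ to the function $u_{\alpha}$ defined in (\ref{ualfa}), and $u_{\alpha}$ is a weak solution of (\ref{palfa}). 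Finally, the uniqueness statement of Theorem \ref{uniqueness} forces $u=u_{\alpha}$, concluding the proof.

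The main obstacle is the first step, the comparison $u_{n}\leq u$. Although morally this is a standard sub/super-solution comparison, it relies on the nonnegativity of the right-hand side of the subtracted equation on $\{u_{n}>u\}$ (which uses $\omega_{n}\leq\omega$ together with the strict positivity of $u$) and on the delicate nonlocal coercivity provided by Lemma \ref{lemma+}, whose kernel $Q$ can degenerate when $p>2$; one has to repeat the reasoning already used in Proposition \ref{unmonot} to pass from $|(u_{n}-u)_{+}(x)-(u_{n}-u)_{+}(y)|^{2}Q(x,y)=0$ a.e.\ to $(u_{n}-u)_{+}\equiv 0$.
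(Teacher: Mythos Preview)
Your proof is correct and follows essentially the same route as the paper: test the difference of equations with $(u_{n}-u)_{+}$, use Lemma \ref{lemma+} for the left-hand side and the sign/monotonicity of the right-hand side to conclude $u_{n}\leq u$, then invoke (\ref{gmonot}) to bound $[u_{n}]_{s,p}$, apply Proposition \ref{gaglibound}, and finish with uniqueness. The only cosmetic difference is that the paper first replaces $(u_{n}+\tfrac{1}{n})^{-\alpha}$ by $u_{n}^{-\alpha}$ before comparing, whereas you compare $(u_{n}+\tfrac{1}{n})^{-\alpha}$ directly with $u^{-\alpha}$ on $\{u_{n}>u\}$; both are valid.
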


\begin{proof}
Let $\varphi=(u_{n}-u)_{+}.$ On the one hand, according to Lemma \ref{lemma+},
we have
\[
\left\langle (-\Delta_{p})^{s}\,u_{n}-(-\Delta_{p})^{s}\,u,\varphi
\right\rangle \geq0.
\]
On the other hand,%
\begin{align*}
\left\langle (-\Delta_{p})^{s}\,u_{n}-(-\Delta_{p})^{s}\,u,\varphi
\right\rangle  &  =%
{\displaystyle\int_{\Omega}}
\dfrac{\omega_{n}\varphi}{(u_{n}+\frac{1}{n})^{\alpha}}-\dfrac{\omega\varphi
}{u^{\alpha}}\mathrm{d}x\\
&  \leq%
{\displaystyle\int_{\Omega}}
\dfrac{\omega_{n}\varphi}{(u_{n})^{\alpha}}-\dfrac{\omega\varphi}{u^{\alpha}%
}\mathrm{d}x\\
&  =%
{\displaystyle\int_{u_{n}\geq u}}
\dfrac{\omega_{n}\varphi}{(u_{n})^{\alpha}}-\dfrac{\omega\varphi}{u^{\alpha}%
}\mathrm{d}x\\
&  \leq%
{\displaystyle\int_{u_{n}\geq u}}
\left(  \dfrac{1}{(u_{n})^{\alpha}}-\dfrac{1}{u^{\alpha}}\right)
\omega\varphi\mathrm{d}x\leq0.
\end{align*}
Thus, by repeating the arguments in the proof of Proposition \ref{unmonot} we
can conclude that $u_{n}\leq u$ almost everywhere.

Hence, by using (\ref{gmonot}), we obtain the boundedness of the sequence
$\left\{  u_{n}\right\}  _{n\in\mathbb{N}}$ in $W_{0}^{s,p}(\Omega):$
\[
\left[  u_{n}\right]  _{s,p}^{p}\leq\left[  u\right]  _{s,p}^{p}+p%
{\displaystyle\int_{\Omega}}
\dfrac{\omega_{n}(u_{n}-u)}{(u_{n}+\frac{1}{n})^{\alpha}}\mathrm{d}%
x\leq\left[  u\right]  _{s,p}^{p}.
\]

Consequently, according to Proposition \ref{gaglibound}, $\left\{
u_{n}\right\}  _{n\in\mathbb{N}}$ converges in $W_{0}^{s,p}(\Omega)$ to
$u_{\alpha}$ and this function is the only solution of (\ref{palfa}).
Therefore, $u=u_{\alpha}.$
\end{proof}

\subsection{Existence for the singular problem}

In the sequel we will use the following notation
\begin{equation}
r_{\alpha}:=\left\{
\begin{array}
[c]{ccl}%
1 & \mathrm{if} & \alpha=1\\
\left(  \dfrac{p_{s}^{\star}}{1-\alpha}\right)  ^{\prime} & \mathrm{if} &
0<\alpha<1\quad\mathrm{and}\quad sp<N\\
\alpha^{-1} & \mathrm{if} & 0<\alpha<1\quad\mathrm{and}\quad sp\geq N.
\end{array}
\right.  \label{qalpha}%
\end{equation}

\begin{theorem}
\label{existence}Let $0<\alpha\leq1$ and $\omega\in L^{r}(\Omega),$ with
$r\geq r_{\alpha}.$ The sequence $\left\{  u_{n}\right\}  _{n\in\mathbb{N}}$
is bounded in $W_{0}^{s,p}(\Omega).$ Consequently, it converges in
$W_{0}^{s,p}(\Omega)$ to $u_{\alpha}$ and this function is the weak solution
of (\ref{palfa}).
\end{theorem}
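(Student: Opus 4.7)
The plan is to test the weak formulation \eqref{weakun} against $\varphi = u_n$ itself, then bound the resulting integral by something sublinear in $[u_n]_{s,p}$, using the fact that the exponent appearing on the right-hand side is $1-\alpha < p$.

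First I would observe the elementary estimate
\[
\frac{u_n}{(u_n+\tfrac{1}{n})^{\alpha}} \leq (u_n+\tfrac{1}{n})^{1-\alpha} \leq (u_n+1)^{1-\alpha},
\]
which (for $0<\alpha<1$) combined with the subadditivity $(a+b)^{1-\alpha}\leq a^{1-\alpha}+b^{1-\alpha}$ gives
\[
[u_n]_{s,p}^{p} = \int_{\Omega} \frac{\omega_n u_n}{(u_n+\tfrac{1}{n})^{\alpha}}\,\mathrm{d}x \;\leq\; \int_{\Omega}\omega\, u_n^{1-\alpha}\,\mathrm{d}x + \|\omega\|_{1}.
\]
When $\alpha=1$ the same computation gives directly $[u_n]_{s,p}^{p}\leq \|\omega\|_{1}$, so that case is immediate.

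Next I would apply Hölder with exponents $r,r'$ to estimate the integral $\int_\Omega \omega\, u_n^{1-\alpha}\,\mathrm{d}x \leq \|\omega\|_r \|u_n^{1-\alpha}\|_{r'}$ and then exploit the Sobolev embedding \eqref{Sq} to pass from a norm of $u_n$ to $[u_n]_{s,p}$. The role of the threshold $r_\alpha$ in \eqref{qalpha} is precisely to make $(1-\alpha)r'$ admissible for the embedding:
\begin{itemize}
\item[] if $sp<N$ and $r\geq r_\alpha=(p_s^{\star}/(1-\alpha))'$, then $(1-\alpha)r'\leq p_s^{\star}$, so $\|u_n^{1-\alpha}\|_{r'}\leq |\Omega|^{\gamma}\|u_n\|_{p_s^{\star}}^{1-\alpha}\leq C\,[u_n]_{s,p}^{1-\alpha}$;
\item[] if $sp\geq N$ and $r\geq r_\alpha=\alpha^{-1}$, then $(1-\alpha)r'\leq 1$, and a similar argument using $L^{\theta}$ for any finite $\theta\geq (1-\alpha)r'$ together with \eqref{Sq} gives the same conclusion $\|u_n^{1-\alpha}\|_{r'}\leq C\,[u_n]_{s,p}^{1-\alpha}$.
\end{itemize}
In either subcase I would therefore arrive at an inequality of the form
\[
[u_n]_{s,p}^{p} \;\leq\; C_1 + C_2\,[u_n]_{s,p}^{1-\alpha},
\]
with $C_1,C_2$ independent of $n$.

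Since $1-\alpha<1<p$, this inequality forces $\{[u_n]_{s,p}\}$ to be a bounded sequence (otherwise the right-hand side would grow strictly slower than the left). Once boundedness in $W_0^{s,p}(\Omega)$ is established, the conclusion is handed to Proposition \ref{gaglibound}, which yields both the strong convergence $u_n\to u_\alpha$ in $W_0^{s,p}(\Omega)$ and the fact that $u_\alpha$ is the (unique, by Theorem \ref{uniqueness}) weak solution of \eqref{palfa}. The main technical care is in the choice of Hölder exponents: one must verify that the definition of $r_\alpha$ matches exactly the embedding threshold in each of the three cases in \eqref{qalpha}; this is the only step where the hypothesis $r\geq r_\alpha$ is used.
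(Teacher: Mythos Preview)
Your argument is correct and follows essentially the same route as the paper: test \eqref{weakun} with $\varphi=u_n$, then combine H\"older with the Sobolev embedding governed by $r_\alpha$ to get $[u_n]_{s,p}^{p}\leq C[u_n]_{s,p}^{1-\alpha}+C'$, and finish with Proposition~\ref{gaglibound}. The only cosmetic difference is that the paper uses the sharper pointwise bound $u_n/(u_n+\tfrac1n)^{\alpha}\leq u_n^{1-\alpha}$ (valid since $(u_n+\tfrac1n)^{\alpha}\geq u_n^{\alpha}$), which avoids your additive term $\|\omega\|_1$; this has no bearing on the conclusion.
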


\begin{proof}
We will assume in this proof, without loss of generality, that $r=r_{\alpha}$
(note that $L^{r}(\Omega)\hookrightarrow L^{r_{\alpha}}(\Omega)$ whenever
$r\geq r_{\alpha}$).

According to Proposition \ref{gaglibound}, we need only to show that the
sequence $\left\{  u_{n}\right\}  _{n\in\mathbb{N}}$ is bounded in
$W_{0}^{s,p}(\Omega).$

We have%
\begin{equation}
\left[  u_{n}\right]  _{s,p}^{p}=%
{\displaystyle\int_{\Omega}}
\dfrac{\omega_{n}u_{n}}{(u_{n}+\frac{1}{n})^{\alpha}}\mathrm{d}x\leq%
{\displaystyle\int_{\Omega}}
\dfrac{\omega u_{n}}{(u_{n}+\frac{1}{n})^{\alpha}}\mathrm{d}x\leq%
{\displaystyle\int_{\Omega}}
(u_{n})^{1-\alpha}\omega\mathrm{d}x, \label{aux9}%
\end{equation}
where the equality follows from (\ref{weakun}). Thus, $\left[  u_{n}\right]
_{s,p}^{p}\leq\left\Vert \omega\right\Vert _{1}=\left\Vert \omega\right\Vert
_{r_{\alpha}},$ when $\alpha=1.$

In the case $0<\alpha<1,$ by applying H\"{o}lder inequality to (\ref{aux9}),
we obtain%
\begin{equation}
\left[  u_{n}\right]  _{s,p}^{p}\leq\left(
{\displaystyle\int_{\Omega}}
(u_{n})^{(1-\alpha)(r_{\alpha})^{\prime}}\mathrm{d}x\right)  ^{\frac
{1}{(r_{\alpha})^{\prime}}}\left(
{\displaystyle\int_{\Omega}}
\left\vert \omega\right\vert ^{r_{\alpha}}\mathrm{d}x\right)  ^{\frac
{1}{r_{\alpha}}}=\left\Vert u_{n}\right\Vert _{(1-\alpha)(r_{\alpha})^{\prime
}}^{1-\alpha}\left\Vert \omega\right\Vert _{r_{\alpha}}. \label{aux10}%
\end{equation}
Hence, when $sp<N$ we have $(1-\alpha)(r_{\alpha})^{\prime}=p_{s}^{\star}$, so
that%
\[
\left[  u_{n}\right]  _{s,p}^{p}\leq\left\Vert \omega\right\Vert _{r_{\alpha}%
}\left\Vert u_{n}\right\Vert _{p_{s}^{\star}}^{1-\alpha}\leq\left(
(S_{p_{s}^{\star}})^{\frac{1}{p}}\left[  u_{n}\right]  _{s,p}\right)
^{1-\alpha}\left\Vert \omega\right\Vert _{r_{\alpha}}.
\]
It follows that $\left\{  u_{n}\right\}  _{n\in\mathbb{N}}$ is bounded in
$W_{0}^{s,p}(\Omega)$ and
\[
\left[  u_{n}\right]  _{s,p}^{p-(1-\alpha)}\leq\left\Vert \omega\right\Vert
_{r_{\alpha}}(S_{p_{s}^{\star}})^{\frac{1-\alpha}{p}}.
\]

At last, for $sp\geq N$ we have $(1-\alpha)(r_{\alpha})^{\prime}=1,$ so that,
by (\ref{aux10}),%
\[
\left[  u_{n}\right]  _{s,p}^{p}\leq\left\Vert \omega\right\Vert _{r_{\alpha}%
}\left\Vert u_{n}\right\Vert _{1}^{1-\alpha}\leq\left\Vert \omega\right\Vert
_{r_{\alpha}}\left(  (S_{1})^{\frac{1}{p}}\left[  u_{n}\right]  _{s,p}\right)
^{1-\alpha}.
\]
Therefore, $\left\{  u_{n}\right\}  _{n\in\mathbb{N}}$ is bounded in
$W_{0}^{s,p}(\Omega)\ $and
\[
\left[  u_{n}\right]  _{s,p}^{p-(1-\alpha)}\leq\left\Vert \omega\right\Vert
_{r_{\alpha}}(S_{1})^{\frac{1-\alpha}{p}}.
\]

\end{proof}

\begin{remark}
\label{ufinite}Theorem \ref{existence} guarantees that if $0<\alpha\leq1$ and
$\omega\in L^{r}(\Omega),$ with $r\geq r_{\alpha},$ then $u_{\alpha}%
(x)<\infty$ for almost every $x\in\Omega.$ The same holds true if $\alpha>1$
and $\omega\in L^{1}(\Omega).$ Indeed, in \cite[Lemma 3.4]{CMS} the authors
proved that, under these hypotheses, the sequence $\left\{  u_{n}%
^{(\alpha-1+p)/p}\right\}  _{n\in\mathbb{N}}$ is bounded in $W_{0}%
^{s,p}(\Omega).$ This fact and the monotonicity of $\left\{  u_{n}\right\}
_{n\in\mathbb{N}}$ imply that $u_{\alpha}^{(\alpha-1+p)/p}\in L^{1}(\Omega),$
so that $u_{\alpha}(x)<\infty$ for almost every $x\in\Omega.$
\end{remark}

\begin{remark}
\label{alfa>1}When $\alpha>1,$ we have
\[
\left[  u_{n}\right]  _{s,p}^{p}=%
{\displaystyle\int_{\Omega}}
\dfrac{\omega_{n}u_{n}}{(u_{n}+\frac{1}{n})^{\alpha}}\mathrm{d}x\leq%
{\displaystyle\int_{\Omega}}
\dfrac{\omega u_{n}}{(u_{n})^{\alpha}}\mathrm{d}x\leq%
{\displaystyle\int_{\Omega}}
\dfrac{\omega}{(u_{1})^{\alpha-1}}\mathrm{d}x.
\]
Thus, if $\omega$ belongs to $L^{1}(\Omega)$ and vanishes in $\Omega
\setminus\Omega^{\prime},$ for some proper subdomain $\Omega^{\prime}$ of
$\Omega,$ then
\[
\left[  u_{n}\right]  _{s,p}^{p}\leq%
{\displaystyle\int_{\Omega^{\prime}}}
\dfrac{\omega}{(u_{1})^{\alpha-1}}\mathrm{d}x\leq\frac{\left\Vert
\omega\right\Vert _{1}}{\min_{\overline{\Omega^{\prime}}}(u_{1})^{\alpha-1}%
}<\infty,
\]
which shows that $u_{\alpha}$ is the only weak solution of (\ref{palfa}).
\end{remark}

\section{Sobolev inequality associated with $0<\alpha<1$\label{Sec3}}

In this section we consider $0<\alpha<1$ and $\omega\in L^{r}(\Omega),$ with
$r\geq r_{\alpha},$ where $r_{\alpha}$ is defined by (\ref{qalpha}). Thus,
according to Theorem \ref{existence}, the existence of the unique weak
solution $u_{\alpha}$ of the singular problem (\ref{palfa}) is guaranteed.

In order to derive the Sobolev inequality (\ref{Sobolev1}) we will first show
that $u_{\alpha}$ minimizes the energy functional $E_{\alpha}:W_{0}%
^{s,p}(\Omega)\longrightarrow\mathbb{R},$ associated with the singular problem
(\ref{palfa}), defined by%
\[
E_{\alpha}(v):=\frac{1}{p}\left[  v\right]  _{s,p}^{p}-\frac{1}{1-\alpha}%
{\displaystyle\int_{\Omega}}
(v_{+})^{1-\alpha}\omega\mathrm{d}x.
\]

Since $E_{\alpha}$ is not differentiable we will obtain its minimizer as the
limit of the sequence $\left\{  u_{n}\right\}  _{n\in\mathbb{N}}$ by taking
advantage that $u_{n}$ minimizes the energy functional $E_{n}:W_{0}%
^{s,p}(\Omega)\longrightarrow\mathbb{R}$ associated with (\ref{Pn}), which is
defined by
\[
E_{n}(v):=\frac{1}{p}\left[  v\right]  _{s,p}^{p}-%
{\displaystyle\int_{\Omega}}
G_{n}(v)\omega_{n}\mathrm{d}x
\]
where $G_{n}:\mathbb{R}\rightarrow\mathbb{R}$ is the increasing $C^{1}$
function
\[
G_{n}(t):=\frac{1}{1-\alpha}(t_{+}+\frac{1}{n})^{1-\alpha}-\left(  \frac{1}%
{n}\right)  ^{-\alpha}t_{-}%
\]
(as usual, $t_{\pm}=\max\left\{  \pm t,0\right\}  $).

One can easily see that $E_{n}$ is of class $C^{1}$ and
\[
\left\langle E_{n}^{\prime}(v),\varphi\right\rangle =\left\langle (-\Delta
_{p})^{s}\,v,\varphi\right\rangle -%
{\displaystyle\int_{\Omega}}
G_{n}^{\prime}(v)\omega_{n}\varphi\mathrm{d}x,\quad\forall\,\varphi\in
W_{0}^{s,p}(\Omega).
\]
Thus, nonnegative critical points of $E_{n}$ are weak solutions of (\ref{Pn}).
Moreover, by making use of standard arguments one can also check that $E_{n}$
is coercive and bounded from below. All of these features of $E_{n}$ allow one
to verify that $E_{n}$ attains its minimum value at a function $v_{n}\in
W_{0}^{s,p}(\Omega).$ Since $E_{n}(v_{+})\leq E_{n}(v)$ for all $v\in
W_{0}^{s,p}(\Omega)$ one has $v_{n}\geq0.$ Of course, the minimizer $v_{n}$ is
also a critical point of $E_{n},$ that is,%
\[
\left\langle (-\Delta_{p})^{s}\,v_{n},\varphi\right\rangle =%
{\displaystyle\int_{\Omega}}
\frac{\omega_{n}\varphi}{(v_{n}+\frac{1}{n})^{\alpha}}\mathrm{d}x,\quad
\forall\,\varphi\in W_{0}^{s,p}(\Omega).
\]
Therefore, $v_{n}=u_{n}$ since $u_{n}$ is the only nonnegative function
satisfying (\ref{weakun}).

\begin{proposition}
The function $u_{\alpha}$ minimizes the energy functional $E_{\alpha}.$
\end{proposition}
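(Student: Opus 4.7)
The plan is to exploit the fact (established in the paragraph preceding the statement) that each $u_{n}$ minimizes the $C^{1}$ functional $E_{n}$, and then pass to the limit in the inequality $E_{n}(u_{n})\le E_{n}(v)$. Since $E_\alpha(v_+)\le E_\alpha(v)$ for every $v\in W_{0}^{s,p}(\Omega)$ by Remark \ref{Sign}, I may restrict attention to nonnegative test functions $v$, in which case $G_{n}(v)=\frac{1}{1-\alpha}(v+\tfrac{1}{n})^{1-\alpha}$, so the awkward $(1/n)^{-\alpha}v_{-}$ piece disappears.

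The first step is to show $E_{n}(u_{n})\to E_{\alpha}(u_{\alpha})$. By Theorem \ref{existence} and Proposition \ref{gaglibound}, $u_{n}\to u_{\alpha}$ strongly in $W_{0}^{s,p}(\Omega)$, which gives $[u_{n}]_{s,p}^{p}\to[u_{\alpha}]_{s,p}^{p}$. For the integral term I note that $u_{n}\nearrow u_{\alpha}$ and $\omega_{n}\nearrow\omega$ pointwise, so $(u_{n}+\frac{1}{n})^{1-\alpha}\omega_{n}\to u_{\alpha}^{1-\alpha}\omega$ almost everywhere, and monotone-type control combined with the Hölder bound $\int(u_{\alpha}+1)^{1-\alpha}\omega\,dx\le\|(u_{\alpha}+1)^{1-\alpha}\|_{r_{\alpha}^{\prime}}\|\omega\|_{r_{\alpha}}<\infty$ (valid because $(1-\alpha)r_{\alpha}^{\prime}\in\{p_{s}^{\star},1\}\subset[1,p_{s}^{\star}]$, so $u_{\alpha}\in L^{(1-\alpha)r_{\alpha}^{\prime}}(\Omega)$) lets dominated convergence conclude that $\int G_{n}(u_{n})\omega_{n}\,dx\to\frac{1}{1-\alpha}\int u_{\alpha}^{1-\alpha}\omega\,dx$.

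The second step is the analogous statement for a fixed nonnegative $v\in W_{0}^{s,p}(\Omega)$: $E_{n}(v)\to E_{\alpha}(v)$. Here the $[v]_{s,p}^{p}$ part is constant in $n$, and $(v+\tfrac{1}{n})^{1-\alpha}\omega_{n}\to v^{1-\alpha}\omega$ pointwise. The integrable majorant is $(v+1)^{1-\alpha}\omega$, which again lies in $L^{1}(\Omega)$ by Hölder with exponents $r_{\alpha}^{\prime}$ and $r_{\alpha}$, using the embedding $W_{0}^{s,p}(\Omega)\hookrightarrow L^{(1-\alpha)r_{\alpha}^{\prime}}(\Omega)$. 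Combining the two limits in the inequality $E_{n}(u_{n})\le E_{n}(v)$ yields $E_{\alpha}(u_{\alpha})\le E_{\alpha}(v)$ for every $v\ge 0$, and the reduction at the start extends this to arbitrary $v\in W_{0}^{s,p}(\Omega)$.

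The main obstacle is verifying the integrability hypotheses that justify the dominated convergence argument: the precise relation $(1-\alpha)r_{\alpha}^{\prime}\le p_{s}^{\star}$ built into the definition (\ref{qalpha}) of $r_{\alpha}$ is exactly what makes the majorant $(v+1)^{1-\alpha}\omega$ integrable for every $v\in W_{0}^{s,p}(\Omega)$, so the construction of $r_{\alpha}$ and the assumption $\omega\in L^{r}(\Omega)$ with $r\ge r_{\alpha}$ are what keep the limiting procedure legitimate. Once this is in place, the passage to the limit is mechanical.
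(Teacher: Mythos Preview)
Your proof is correct and follows essentially the same approach as the paper: show $E_{n}(u_{n})\to E_{\alpha}(u_{\alpha})$ and $E_{n}(v)\to E_{\alpha}(v)$ via dominated convergence (using the majorant $(v+1)^{1-\alpha}\omega\in L^{1}(\Omega)$ and the strong convergence $u_{n}\to u_{\alpha}$), then pass to the limit in $E_{n}(u_{n})\le E_{n}(v)$. Your preliminary reduction to nonnegative $v$ is a clean way to dispose of the $(1/n)^{-\alpha}v_{-}$ term (the paper handles this implicitly by writing the dominated-convergence bounds for $G_{n}(v_{+})$), and your explicit H\"{o}lder justification that $(v+1)^{1-\alpha}\omega\in L^{1}(\Omega)$ via $(1-\alpha)r_{\alpha}^{\prime}\le p_{s}^{\star}$ is a welcome detail the paper leaves tacit; note only that the inequality $[v_{+}]_{s,p}\le[v]_{s,p}$ follows from the $1$-Lipschitz property of $t\mapsto t_{+}$ rather than directly from Remark~\ref{Sign}.
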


\begin{proof}
Recall that $u_{n}\rightarrow u_{\alpha}$ strongly in $W_{0}^{s,p}(\Omega)$
and that $u_{n}\leq u_{\alpha}.$ Thus, $\left[  u_{n}\right]  _{s,p}%
^{p}\rightarrow\left[  u_{\alpha}\right]  _{s,p}^{p},$
\[
0\leq G_{n}(u_{n})\omega_{n}\leq\frac{(u_{\alpha}+1)^{1-\alpha}\omega
}{1-\alpha}\in L^{1}(\Omega),
\]
and%
\[
\lim_{n\rightarrow\infty}G_{n}(u_{n})\omega_{n}=\frac{(u_{\alpha})^{1-\alpha
}\omega}{1-\alpha}\quad\mathrm{a.e.\,in}\,\Omega.
\]
These facts show that $E_{n}(u_{n})\rightarrow E_{\alpha}(u_{\alpha}).$

For each $v\in W_{0}^{s,p}(\Omega)$ we have%
\[
0\leq G_{n}(v_{+})\omega_{n}\leq\frac{(v_{+}+1)^{1-\alpha}\omega}{1-\alpha}\in
L^{1}(\Omega),
\]
and%
\[
\lim_{n\rightarrow\infty}G_{n}(v_{+})\omega_{n}=\frac{(v_{+})^{1-\alpha}%
\omega}{1-\alpha},\quad\mathrm{a.e.\,in}\,\Omega,
\]
so that $E_{n}(v)\rightarrow E_{\alpha}(v).$

Therefore, observing that $E_{n}(u_{n})\leq E_{n}(v)$ we obtain%
\[
E_{\alpha}(u_{\alpha})\leq E_{\alpha}(v),\quad\forall\,v\in W_{0}^{s,p}%
(\Omega).
\]

\end{proof}

In order to simply the notation in the sequence, let us define%
\[
\mathcal{M}_{\alpha}:=\left\{  v\in W_{0}^{s,p}(\Omega):%
{\displaystyle\int_{\Omega}}
\left\vert v\right\vert ^{1-\alpha}\omega\mathrm{d}x=1\right\}
\]
and%
\begin{equation}
U_{\alpha}:=\theta_{\alpha}u_{\alpha},\quad\mathrm{where}\quad\theta_{\alpha
}:=\left(
{\displaystyle\int_{\Omega}}
\left\vert u_{\alpha}\right\vert ^{1-\alpha}\omega\mathrm{d}x\right)
^{-\frac{1}{1-\alpha}}. \label{Ualfa}%
\end{equation}

Of course, $U_{\alpha}\in\mathcal{M}_{\alpha}.$

\begin{theorem}
\label{teo1}One has
\begin{equation}
\left[  U_{\alpha}\right]  _{s,p}^{p}=\left[  u_{\alpha}\right]
_{s,p}^{p(\frac{1-\alpha-p}{1-\alpha})}=\min_{v\in\mathcal{M}_{\alpha}}\left[
v\right]  _{s,p}^{p}. \label{Ray}%
\end{equation}

\end{theorem}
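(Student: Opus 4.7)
The plan is to establish both equalities by direct computation using the fact, proved in the preceding proposition, that $u_\alpha$ minimizes the energy functional $E_\alpha$.

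First, I would verify the left equality in (\ref{Ray}). Testing the weak formulation of (\ref{palfa}) with the solution $u_\alpha$ itself (which is admissible thanks to Proposition \ref{wtest}) gives the basic identity
\[
\left[  u_{\alpha}\right]  _{s,p}^{p}=
{\displaystyle\int_{\Omega}}
\dfrac{\omega u_{\alpha}}{u_{\alpha}^{\alpha}}\mathrm{d}x=
{\displaystyle\int_{\Omega}}
u_{\alpha}^{1-\alpha}\omega\mathrm{d}x.
\]
Substituting this into the definition of $\theta_\alpha$ in (\ref{Ualfa}) yields $\theta_\alpha = [u_\alpha]_{s,p}^{-p/(1-\alpha)}$, and then, by the scaling of the Gagliardo semi-norm,
\[
\left[  U_{\alpha}\right]  _{s,p}^{p}=\theta_{\alpha}^{p}\left[  u_{\alpha}\right]  _{s,p}^{p}=\left[  u_{\alpha}\right]  _{s,p}^{p-\frac{p^{2}}{1-\alpha}}=\left[  u_{\alpha}\right]  _{s,p}^{p\frac{1-\alpha-p}{1-\alpha}},
\]
which is the first equality in (\ref{Ray}).

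Next, I would establish the minimization characterization. Fix $v \in \mathcal{M}_\alpha$; by Remark \ref{Sign} and the fact that $|v| \in \mathcal{M}_\alpha$ whenever $v$ is, I may assume $v\geq 0$, because replacing $v$ by $|v|$ can only decrease the semi-norm. For such $v$ and any $t>0$, the function $tv$ is nonnegative, so
\[
E_{\alpha}(tv)=\frac{t^{p}}{p}\left[  v\right]  _{s,p}^{p}-\frac{t^{1-\alpha}}{1-\alpha}.
\]
Since $u_\alpha$ minimizes $E_\alpha$, we have $E_\alpha(u_\alpha)\leq E_\alpha(tv)$ for all $t>0$. Minimizing the right-hand side over $t$ (set the derivative to zero to get $t_{\ast}^{p-1+\alpha}=[v]_{s,p}^{-p}$) and evaluating gives
\[
E_{\alpha}(u_{\alpha})\leq\frac{1-\alpha-p}{p(1-\alpha)}\left[  v\right]  _{s,p}^{-\frac{p(1-\alpha)}{p-1+\alpha}}.
\]
On the other hand, a direct computation using the identity $[u_\alpha]_{s,p}^p = \int_\Omega u_\alpha^{1-\alpha}\omega\, dx$ from the first paragraph yields
\[
E_{\alpha}(u_{\alpha})=\frac{1-\alpha-p}{p(1-\alpha)}\left[  u_{\alpha}\right]  _{s,p}^{p}.
\]

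Comparing the two expressions for $E_\alpha(u_\alpha)$, and remembering that the common prefactor $(1-\alpha-p)/(p(1-\alpha))$ is strictly negative (since $p+\alpha>1$), the inequality reverses upon division, giving
\[
\left[  u_{\alpha}\right]  _{s,p}^{p}\geq\left[  v\right]  _{s,p}^{-\frac{p(1-\alpha)}{p-1+\alpha}}.
\]
Raising to the positive power $(p-1+\alpha)/(1-\alpha)$ and rearranging produces
\[
\left[  v\right]  _{s,p}^{p}\geq\left[  u_{\alpha}\right]  _{s,p}^{p\frac{1-\alpha-p}{1-\alpha}}=\left[  U_{\alpha}\right]  _{s,p}^{p},
\]
which is the desired lower bound. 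Since $U_\alpha\in\mathcal{M}_\alpha$ realizes equality, the infimum is attained and equals $[U_\alpha]_{s,p}^p$. The only delicate point is correctly tracking signs and exponents when inverting the inequality through the negative prefactor; otherwise the argument is purely algebraic once the variational inequality $E_\alpha(u_\alpha)\leq E_\alpha(tv)$ is in hand.
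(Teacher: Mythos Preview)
Your proof is correct and follows essentially the same route as the paper: both arguments use the identity $[u_\alpha]_{s,p}^p=\int_\Omega u_\alpha^{1-\alpha}\omega\,\mathrm{d}x$ to get the first equality, then exploit $E_\alpha(u_\alpha)\le E_\alpha(t|v|)$ for $v\in\mathcal{M}_\alpha$ and optimize in $t$ (the paper picks $t=([v]_{s,p}^p)^{-1/(p-1+\alpha)}$ directly, you derive it by calculus). The only cosmetic difference is that the paper keeps $|v|$ throughout and uses $[|v|]_{s,p}^p\le[v]_{s,p}^p$ in the chain, whereas you reduce to $v\ge0$ at the outset via Remark~\ref{Sign}.
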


\begin{proof}
Since $u_{\alpha}$ is a weak solution of (\ref{palfa}) we have%
\begin{equation}
\left[  u_{\alpha}\right]  _{s,p}^{p}=%
{\displaystyle\int_{\Omega}}
\left\vert u_{\alpha}\right\vert ^{1-\alpha}\omega\mathrm{d}x, \label{aux13}%
\end{equation}
so that
\begin{align*}
\left[  U_{\alpha}\right]  _{s,p}^{p}  &  =(\theta_{\alpha})^{p}\left[
u_{\alpha}\right]  _{s,p}^{p}\\
&  =\left(
{\displaystyle\int_{\Omega}}
\left\vert u_{\alpha}\right\vert ^{1-\alpha}\omega\mathrm{d}x\right)
^{-\frac{p}{1-\alpha}}\left[  u_{\alpha}\right]  _{s,p}^{p}=\left(  \left[
u_{\alpha}\right]  _{s,p}^{p}\right)  ^{-\frac{p}{1-\alpha}}\left[  u_{\alpha
}\right]  _{s,p}^{p}=\left[  u_{\alpha}\right]  _{s,p}^{p(\frac{1-\alpha
-p}{1-\alpha})},
\end{align*}
what is the first equality in (\ref{Ray}).

In order to prove the second equality in (\ref{Ray}) let us fix $v\in
\mathcal{M}_{\alpha}.$ It follows from (\ref{aux13}) that%
\[
E_{\alpha}(u_{\alpha})=\frac{1}{p}\left[  u_{\alpha}\right]  _{s,p}^{p}%
-\frac{1}{1-\alpha}%
{\displaystyle\int_{\Omega}}
(u_{\alpha})_{+}^{1-\alpha}\omega\mathrm{d}x=\left(  \frac{1}{p}-\frac
{1}{1-\alpha}\right)  \left[  u_{\alpha}\right]  _{s,p}^{p}.
\]

Now, for any $t>0$ we have
\begin{align*}
\left(  \frac{1}{p}-\frac{1}{1-\alpha}\right)  \left[  u_{\alpha}\right]
_{s,p}^{p}  &  =E_{\alpha}(u_{\alpha})\\
&  \leq E_{\alpha}(t\left\vert v\right\vert )\\
&  =\frac{t^{p}}{p}\left[  \left\vert v\right\vert \right]  _{s,p}^{p}%
-\frac{t^{1-\alpha}}{1-\alpha}%
{\displaystyle\int_{\Omega}}
\left\vert v\right\vert ^{1-\alpha}\omega\mathrm{d}x\\
&  =\frac{t^{p}}{p}\left[  \left\vert v\right\vert \right]  _{s,p}^{p}%
-\frac{t^{1-\alpha}}{1-\alpha}\leq\frac{t^{p}}{p}\left[  v\right]  _{s,p}%
^{p}-\frac{t^{1-\alpha}}{1-\alpha},
\end{align*}
that is
\[
t^{1-\alpha}\left(  \frac{1}{1-\alpha}-\frac{t^{p-(1-\alpha)}}{p}\left[
v\right]  _{s,p}^{p}\right)  \leq\left(  \frac{1}{1-\alpha}-\frac{1}%
{p}\right)  \left[  u_{\alpha}\right]  _{s,p}^{p}.
\]

By choosing
\[
t=\left(  \left[  v\right]  _{s,p}^{p}\right)  ^{-\frac{1}{p-(1-\alpha)}}%
\]
we obtain%
\[
\left(  \left[  v\right]  _{s,p}^{p}\right)  ^{-\frac{1-\alpha}{p-(1-\alpha)}%
}\left(  \frac{1}{1-\alpha}-\frac{1}{p}\right)  \leq\left(  \frac{1}{1-\alpha
}-\frac{1}{p}\right)  \left[  u_{\alpha}\right]  _{s,p}^{p},
\]
so that%
\[
\left[  u_{\alpha}\right]  _{s,p}^{p(\frac{1-\alpha-p}{1-\alpha})}\leq\left[
v\right]  _{s,p}^{p}.
\]

This fact implies that
\[
\left[  u_{\alpha}\right]  _{s,p}^{p(\frac{1-\alpha-p}{1-\alpha})}\leq
\inf_{v\in\mathcal{M}_{\alpha}}\left[  v\right]  _{s,p}^{p}%
\]
and then the first equality in (\ref{Ray}) shows that this infimum is reached
at $U_{\alpha}.$
\end{proof}

From now on we denote the minimum in (\ref{Ray}) by $\lambda_{\alpha},$ that
is,%
\begin{equation}
\lambda_{\alpha}:=\min_{v\in\mathcal{M}_{\alpha}}\left[  v\right]  _{s,p}%
^{p}=\left[  U_{\alpha}\right]  _{s,p}^{p}=\left[  u_{\alpha}\right]
_{s,p}^{p(\frac{1-\alpha-p}{1-\alpha})}. \label{lambalf}%
\end{equation}

\begin{corollary}
The inequality
\begin{equation}
C\left(
{\displaystyle\int_{\Omega}}
\left\vert v\right\vert ^{1-\alpha}\omega\mathrm{d}x\right)  ^{\frac
{p}{1-\alpha}}\leq\left[  v\right]  _{s,p}^{p},\quad\forall\,v\in W_{0}%
^{s,p}(\Omega) \label{Sobolev1a}%
\end{equation}
holds if, and only if, $C\leq\lambda_{\alpha}.$
\end{corollary}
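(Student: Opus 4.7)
The plan is to establish the two directions of the equivalence separately, using Theorem \ref{teo1} (i.e.\ the characterization $\lambda_{\alpha}=\min_{v\in\mathcal{M}_{\alpha}}[v]_{s,p}^{p}$) as the key input.

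For the forward direction, assume $C\leq\lambda_{\alpha}$. Given an arbitrary $v\in W_{0}^{s,p}(\Omega)$, I would dispose of the trivial case where $\int_{\Omega}|v|^{1-\alpha}\omega\,\mathrm{d}x=0$ (then the left-hand side is $0$ and the inequality holds). Otherwise, I would homogenize: set
\[
t:=\left(\int_{\Omega}|v|^{1-\alpha}\omega\,\mathrm{d}x\right)^{-\frac{1}{1-\alpha}}, \qquad w:=t|v|,
\]
so that $w\in\mathcal{M}_{\alpha}$. Applying the variational characterization \eqref{lambalf} gives $\lambda_{\alpha}\leq[w]_{s,p}^{p}=t^{p}[|v|]_{s,p}^{p}\leq t^{p}[v]_{s,p}^{p}$, where the last step uses Remark \ref{Sign}. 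Substituting the definition of $t$ and using $C\leq\lambda_{\alpha}$ yields \eqref{Sobolev1a}.

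For the converse, assume the inequality \eqref{Sobolev1a} holds for every $v\in W_{0}^{s,p}(\Omega)$. I would simply test it at the specific function $v=U_{\alpha}$ defined in \eqref{Ualfa}. By construction $U_{\alpha}\in\mathcal{M}_{\alpha}$, so $\int_{\Omega}|U_{\alpha}|^{1-\alpha}\omega\,\mathrm{d}x=1$, and by \eqref{lambalf} we have $[U_{\alpha}]_{s,p}^{p}=\lambda_{\alpha}$. Plugging in therefore gives $C\cdot 1\leq\lambda_{\alpha}$, as desired.

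There is no real obstacle here; the entire argument is a scaling/homogenization exercise once Theorem \ref{teo1} is in hand. The only minor subtlety is making sure the replacement $v\rightsquigarrow|v|$ in the forward direction is legitimate, which is guaranteed by Remark \ref{Sign} (and is in fact an equality when $v$ does not change sign).
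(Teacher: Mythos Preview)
Your proof is correct and follows essentially the same approach as the paper: normalize an arbitrary $v$ so that it lies in $\mathcal{M}_{\alpha}$, apply the variational characterization \eqref{lambalf}, and for the converse test the inequality at a minimizer. The only cosmetic difference is that the paper rescales $v$ itself (so Remark \ref{Sign} is not needed), whereas you rescale $|v|$; your extra case distinction for $\int_{\Omega}|v|^{1-\alpha}\omega\,\mathrm{d}x=0$ is a small improvement in rigor.
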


\begin{proof}
Since
\[
\left(
{\displaystyle\int_{\Omega}}
\left\vert v\right\vert ^{1-\alpha}\omega\mathrm{d}x\right)  ^{-\frac
{1}{1-\alpha}}v\in\mathcal{M}_{\alpha},\quad\forall\,v\in W_{0}^{s,p}%
(\Omega)\setminus\left\{  0\right\}
\]
it follows from Theorem \ref{teo1} that (\ref{Sobolev1a}) holds for any
$C\leq\lambda_{a}.$ We can see from (\ref{lambalf}) that if $C>\lambda
_{\alpha}$ then (\ref{Sobolev1a}) fails at some $v\in\mathcal{M}_{\alpha}.$
\end{proof}

\begin{proposition}
\label{uniqmin}The only minimizers of the functional $v\mapsto\lbrack
v]_{s,p}^{p}$ on $\mathcal{M}_{\alpha}$ are $U_{\alpha}$ and $-U_{\alpha}.$
Therefore, if
\begin{equation}
\lambda_{\alpha}\left(
{\displaystyle\int_{\Omega}}
\left\vert v\right\vert ^{1-\alpha}\omega\mathrm{d}x\right)  ^{\frac
{p}{1-\alpha}}=\left[  v\right]  _{s,p}^{p} \label{Sob1b}%
\end{equation}
for some $v\in W_{0}^{s,p}(\Omega)\setminus\left\{  0\right\}  ,$ then
$v=kU_{\alpha}$ for some constant $k.$
\end{proposition}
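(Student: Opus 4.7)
The plan is to reduce the claim to uniqueness of nonnegative minimizers of the energy $E_\alpha$, which I would obtain via strict convexity. First, if $v \in \mathcal{M}_\alpha$ satisfies $[v]_{s,p}^p = \lambda_\alpha$, then $|v| \in \mathcal{M}_\alpha$ as well (the constraint depends only on $|v|$), and Remark \ref{Sign} yields $[|v|]_{s,p}^p \le [v]_{s,p}^p$ with strict inequality whenever $v$ changes sign. Minimality forces $v$ to have a definite sign, so up to replacing $v$ by $-v$ I may assume $v \geq 0$.

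Next I transfer from the constrained problem to the unconstrained minimization of $E_\alpha$. Set $t^{*}:=\lambda_\alpha^{-1/(p-1+\alpha)}$ and $w:=t^{*}v$. Repeating the scalar optimization $t\mapsto E_\alpha(tv) = \tfrac{t^p}{p}\lambda_\alpha - \tfrac{t^{1-\alpha}}{1-\alpha}$ executed in the proof of Theorem \ref{teo1}, with $[v]_{s,p}^p = \lambda_\alpha$ and $\int_\Omega v^{1-\alpha}\omega\,\mathrm{d}x = 1$, one checks that $E_\alpha(w) = E_\alpha(u_\alpha)$, so $w$ is itself a nonnegative minimizer of $E_\alpha$.

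The key step is then to prove that $E_\alpha$ admits at most one nonnegative minimizer. Writing $E_\alpha = F - G$ with $F(v) := \tfrac{1}{p}[v]_{s,p}^p$ and $G(v) := \tfrac{1}{1-\alpha}\int_\Omega v_+^{1-\alpha}\omega\,\mathrm{d}x$, the functional $F$ is strictly convex on $W_0^{s,p}(\Omega)$ by the uniform convexity of $(W_0^{s,p}(\Omega),[\cdot]_{s,p})$ recalled in Section 1, while $G$ is concave on the nonnegative cone because $t \mapsto t^{1-\alpha}$ is concave on $[0,\infty)$ for $0<\alpha<1$. Hence $E_\alpha = F + (-G)$ is strictly convex on the nonnegative cone, so its nonnegative minimizer is unique. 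Since $u_\alpha$ has already been identified as a nonnegative minimizer of $E_\alpha$ just before Theorem \ref{teo1}, this forces $w = u_\alpha$; combined with the identity $1/t^{*} = [u_\alpha]_{s,p}^{-p/(1-\alpha)} = \theta_\alpha$ (which uses $\int_\Omega u_\alpha^{1-\alpha}\omega\,\mathrm{d}x = [u_\alpha]_{s,p}^p$), this yields $v = u_\alpha / t^{*} = \theta_\alpha u_\alpha = U_\alpha$, and analogously $v = -U_\alpha$ when $v \leq 0$.

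For the final statement, if $v \neq 0$ satisfies (\ref{Sob1b}), set $c := \bigl(\int_\Omega |v|^{1-\alpha}\omega\,\mathrm{d}x\bigr)^{1/(1-\alpha)} > 0$ and $\tilde v := v/c$; then $\tilde v \in \mathcal{M}_\alpha$ with $[\tilde v]_{s,p}^p = c^{-p}[v]_{s,p}^p = \lambda_\alpha$, so by the first part $\tilde v = \pm U_\alpha$, which gives $v = k U_\alpha$ with $k = \pm c$. The main obstacle is precisely the strict convexity step: although $E_\alpha$ fails to be differentiable, the minimizer is nonetheless pinned down uniquely by combining strict convexity of $F$ (which ultimately reduces to the strict convexity of the weighted $L^p$ norm on $\mathbb{R}^{2N}$ with kernel $|x-y|^{-(N+sp)}$, together with the fact that two elements of $W_0^{s,p}(\Omega)$ agreeing up to an additive constant must coincide) with the concavity of the weighted $L^{1-\alpha}$ integral on the nonnegative cone, thereby avoiding any appeal to an Euler–Lagrange equation for $E_\alpha$.
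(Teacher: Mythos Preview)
Your argument is correct and follows a genuinely different route from the paper. The paper works directly on the constraint set $\mathcal{M}_\alpha$: given a second nonnegative minimizer $\Phi$, it uses the superadditivity of the map $v\mapsto\bigl(\int_\Omega v^{1-\alpha}\omega\,\mathrm{d}x\bigr)^{1/(1-\alpha)}$ on nonnegative functions (a reverse Minkowski inequality, since $1-\alpha<1$) to show that $\tfrac12\Phi+\tfrac12 U_\alpha$, suitably renormalized, still lies in $\mathcal{M}_\alpha$ with $h\geq 1$, and then the chain $\lambda_\alpha\le h^{-p}\lambda_\alpha\le\lambda_\alpha$ forces equality in the triangle inequality for $[\cdot]_{s,p}$, whence $\Phi=U_\alpha$ by strict convexity of the norm.

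You instead remove the constraint by rescaling a minimizer $v\in\mathcal{M}_\alpha$ to a global minimizer $t^{*}v$ of $E_\alpha$, and then invoke strict convexity of $E_\alpha$ on the nonnegative cone (strict convexity of $\tfrac1p[\cdot]_{s,p}^p$ plus concavity of $\tfrac{1}{1-\alpha}\int_\Omega(\cdot)^{1-\alpha}\omega$) to identify $t^{*}v$ with $u_\alpha$. Both proofs rest on the same two ingredients, strict convexity of the Gagliardo norm and concavity of the weighted $(1-\alpha)$-integral, but combine them at different levels. Your approach is somewhat more conceptual and has the advantage of not needing the explicit superadditivity computation; the paper's approach, on the other hand, stays entirely within $\mathcal{M}_\alpha$ and therefore transfers verbatim to the $\alpha\to 1^-$ setting of Section~\ref{Sec4}, where no energy functional $E_\alpha$ is available and the constrained argument is reused to prove uniqueness of the minimizer on $\mathcal{M}$.
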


\begin{proof}
Let $\Phi\in\mathcal{M}_{\alpha}$ be such that $\lambda_{\alpha}=\left[
\Phi\right]  _{s,p}^{p}.$ We observe from Remark \ref{Sign} that $\Phi$ does
not change sign in $\Omega.$ Indeed, otherwise we would arrive at the
following absurd, since $\left\vert \Phi\right\vert \in\mathcal{M}_{\alpha}:$%
\[
\left[  \left\vert \Phi\right\vert \right]  _{s,p}^{p}<\left[  \Phi\right]
_{s,p}^{p}=\lambda_{\alpha}\leq\left[  \left\vert \Phi\right\vert \right]
_{s,p}^{p}.
\]
Thus, without loss of generality, we assume that $\Phi\geq0$ in $\Omega$
(otherwise, we proceed with $-\Phi$ instead of $\Phi$).

Since $\Phi,$ $U_{\alpha},$ $\omega\geq0$ and $0<1-\alpha<1$ we have%
\begin{align*}
\left(  \int_{\Omega}\left(  \frac{\Phi}{2}+\frac{U_{\alpha}}{2}\right)
^{1-\alpha}\omega\mathrm{d}x\right)  ^{\frac{1}{1-\alpha}}  &  =\left(
\int_{\Omega}\left(  \frac{\Phi}{2}\omega^{\frac{1}{1-\alpha}}+\frac
{U_{\alpha}}{2}\omega^{\frac{1}{1-\alpha}}\right)  ^{1-\alpha}\mathrm{d}%
x\right)  ^{\frac{1}{1-\alpha}}\\
&  \geq\left(  \int_{\Omega}\left(  \frac{\Phi}{2}\right)  ^{1-\alpha}%
\omega\mathrm{d}x\right)  ^{\frac{1}{1-\alpha}}+\left(  \int_{\Omega}\left(
\frac{U_{\alpha}}{2}\right)  ^{1-\alpha}\omega\mathrm{d}x\right)  ^{\frac
{1}{1-\alpha}}\\
&  =\frac{1}{2}\left(  \int_{\Omega}\Phi^{1-\alpha}\omega\mathrm{d}x\right)
^{\frac{1}{1-\alpha}}+\frac{1}{2}\left(  \int_{\Omega}\frac{U_{\alpha
}^{1-\alpha}}{2}\omega\mathrm{d}x\right)  ^{\frac{1}{1-\alpha}}\\
&  =\frac{1}{2}+\frac{1}{2}=1,
\end{align*}
showing that
\[
h:=\left(  \int_{\Omega}\left(  \frac{\Phi}{2}+\frac{U_{\alpha}}{2}\right)
^{1-\alpha}\omega\mathrm{d}x\right)  ^{\frac{1}{1-\alpha}}\geq1.
\]

Observing that $h^{-1}(\frac{1}{2}\Phi+\frac{1}{2}U_{\alpha})\in
\mathcal{M}_{\alpha}$ and
\begin{align*}
\lambda_{\alpha}  &  \leq\left[  h^{-1}(\frac{\Phi}{2}+\frac{U_{\alpha}}%
{2})\right]  _{s,p}^{p}\\
&  \leq\frac{1}{h^{p}}\left(  \left[  \frac{\Phi}{2}\right]  _{s,p}+\left[
\frac{U_{\alpha}}{2}\right]  _{s,p}\right)  ^{p}=\frac{1}{h^{p}}\left(
\frac{\lambda_{\alpha}^{\frac{1}{p}}}{2}+\frac{\lambda_{\alpha}^{\frac{1}{p}}%
}{2}\right)  ^{p}=\frac{\lambda_{\alpha}}{h^{p}}\leq\lambda_{\alpha}%
\end{align*}
we can conclude that: $h=1,$ $\frac{1}{2}\Phi+\frac{1}{2}U_{\alpha}%
\in\mathcal{M}_{\alpha}$ and
\begin{equation}
\lambda_{\alpha}^{\frac{1}{p}}=\left[  \frac{\Phi}{2}+\frac{U_{\alpha}}%
{2}\right]  _{s,p}=\left(  \frac{\left[  \Phi\right]  _{s,p}}{2}+\frac{\left[
U_{\alpha}\right]  _{s,p}}{2}\right)  . \label{Sob1c}%
\end{equation}

We recall that the functional $v\mapsto\left[  v\right]  _{s,p}$ is strictly
convex over $W_{0}^{s,p}(\Omega).$ Thus, the second equality in (\ref{Sob1c})
implies that $\Phi=U_{\alpha}.$

We have shown that $\lambda_{\alpha}=\left[  \Phi\right]  _{s,p}^{p}$ for some
$\Phi\in\mathcal{M}_{\alpha}$ if, and only if, either $\Phi=U_{\alpha}$ or
$\Phi=-U_{\alpha}.$ Thus, if (\ref{Sob1b}) holds true for some $v\in
W_{0}^{s,p}(\Omega)\setminus\left\{  0\right\}  ,$ then either $v=\theta
^{-1}U_{\alpha}$ or $v=-\theta^{-1}U_{\alpha},$ where $\theta=\left(
{\displaystyle\int_{\Omega}}
\left\vert v\right\vert ^{1-\alpha}\omega\mathrm{d}x\right)  ^{-\frac
{1}{1-\alpha}}$ (since $\Phi=\theta v\in\mathcal{M}_{\alpha}$ and
$\lambda_{\alpha}=\left[  \theta v\right]  _{s,p}^{p}$).
\end{proof}

\section{Sobolev inequality associated with $\alpha=1$\label{Sec4}}

According to (\ref{lambalf})%
\begin{equation}
\lambda_{\alpha}\left\Vert \omega\right\Vert _{1}^{\frac{p}{1-\alpha}}\left(
\frac{1}{\left\Vert \omega\right\Vert _{1}}%
{\displaystyle\int_{\Omega}}
\left\vert v\right\vert ^{1-\alpha}\omega\mathrm{d}x\right)  ^{\frac
{p}{1-\alpha}}\leq\left[  v\right]  _{s,p}^{p},\quad\forall\,v\in W_{0}%
^{s,p}(\Omega). \label{Saw1}%
\end{equation}

We would like to pass to the limit, as $\alpha\rightarrow1^{-},$ in the above
inequality. For this, we need the following two lemmas.

\begin{lemma}
\label{monot}Let $\omega\in L^{r}(\Omega),$ $r>1,$ and $v\in W_{0}%
^{s,p}(\Omega).$ The map
\begin{equation}
(0,\frac{p_{s}^{\star}}{r^{\prime}})\ni q\mapsto\left(  \frac{1}{\left\Vert
\omega\right\Vert _{1}}%
{\displaystyle\int_{\Omega}}
\left\vert v\right\vert ^{q}\omega\mathrm{d}x\right)  ^{\frac{1}{q}}
\label{mapq}%
\end{equation}
is well-defined and nondecreasing.
\end{lemma}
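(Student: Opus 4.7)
The plan is to exploit the fact that, since $\omega\geq 0$ and $\omega\in L^{1}(\Omega)\setminus\{0\}$, the normalized weight $\mathrm{d}\mu:=\|\omega\|_{1}^{-1}\omega\,\mathrm{d}x$ is a probability measure on $\Omega$. Then the statement is essentially the classical monotonicity of $L^{q}(\mu)$-norms in $q$ under a probability measure, plus a routine integrability check using H\"older's inequality together with the Sobolev embedding $W_{0}^{s,p}(\Omega)\hookrightarrow L^{\theta}(\Omega)$ for $\theta<p_{s}^{\star}$.

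First I would verify that the map in (\ref{mapq}) is well-defined for each $q\in(0,p_{s}^{\star}/r^{\prime})$. By H\"older's inequality with exponents $r$ and $r^{\prime}$,
\[
\int_{\Omega}|v|^{q}\omega\,\mathrm{d}x\leq\left(\int_{\Omega}|v|^{qr^{\prime}}\mathrm{d}x\right)^{\!1/r^{\prime}}\!\|\omega\|_{r}=\|v\|_{qr^{\prime}}^{q}\,\|\omega\|_{r},
\]
and the condition $q<p_{s}^{\star}/r^{\prime}$ gives $qr^{\prime}<p_{s}^{\star}$, so the Sobolev embedding makes $\|v\|_{qr^{\prime}}$ finite for every $v\in W_{0}^{s,p}(\Omega)$.

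For the monotonicity, I would fix $0<q_{1}<q_{2}<p_{s}^{\star}/r^{\prime}$ and apply H\"older's inequality in $L^{1}(\mu)$ with conjugate exponents $q_{2}/q_{1}$ and $q_{2}/(q_{2}-q_{1})$ to the factorization $|v|^{q_{1}}=|v|^{q_{1}}\cdot 1$:
\[
\int_{\Omega}|v|^{q_{1}}\mathrm{d}\mu\leq\left(\int_{\Omega}|v|^{q_{2}}\mathrm{d}\mu\right)^{\!q_{1}/q_{2}}\!\left(\int_{\Omega}\mathrm{d}\mu\right)^{\!(q_{2}-q_{1})/q_{2}}=\left(\int_{\Omega}|v|^{q_{2}}\mathrm{d}\mu\right)^{\!q_{1}/q_{2}},
\]
where the last equality uses that $\mu$ has total mass $1$. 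Raising to the power $1/q_{1}$ yields exactly the desired inequality
\[
\left(\int_{\Omega}|v|^{q_{1}}\mathrm{d}\mu\right)^{\!1/q_{1}}\leq\left(\int_{\Omega}|v|^{q_{2}}\mathrm{d}\mu\right)^{\!1/q_{2}},
\]
i.e.\ the map in (\ref{mapq}) is nondecreasing.

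There is no serious obstacle here; the only mild point to keep track of is the upper cutoff $p_{s}^{\star}/r^{\prime}$ on $q$, which is what guarantees integrability via the Sobolev embedding. Everything else is the standard Jensen/H\"older argument for probability measures, so the proof is essentially a two-line calculation once one observes that $\|\omega\|_{1}^{-1}\omega\,\mathrm{d}x$ is a probability measure.
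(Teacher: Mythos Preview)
Your proof is correct and follows essentially the same route as the paper: both normalize $\omega$ to a probability weight, use H\"older with exponents $r,r'$ together with the Sobolev embedding for well-definedness, and then apply H\"older with exponents $q_{2}/q_{1}$ and $q_{2}/(q_{2}-q_{1})$ against the unit-mass factor for monotonicity. The only cosmetic difference is that the paper writes the splitting as $\overline{\omega}=\overline{\omega}^{q_{1}/q_{2}}\overline{\omega}^{(q_{2}-q_{1})/q_{2}}$ with respect to Lebesgue measure, whereas you phrase it as H\"older in $L^{1}(\mu)$; these are the same computation.
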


\begin{proof}
For simplicity, let us denote $\overline{\omega}=\frac{\omega}{\left\Vert
\omega\right\Vert _{1}},$ so that $\left\Vert \overline{\omega}\right\Vert
_{1}=1.$ For each $q\in(0,\frac{p_{s}^{\star}}{r^{\prime}})$ we have, by
H\"{o}lder's inequality,
\[
\left(
{\displaystyle\int_{\Omega}}
\left\vert v\right\vert ^{q}\overline{\omega}\mathrm{d}x\right)  ^{\frac{1}%
{q}}\leq\left\Vert \overline{\omega}\right\Vert _{r}^{1/q}\left\Vert
v\right\Vert _{qr^{\prime}}<\infty.
\]
Therefore, since the embedding $W_{0}^{s,p}(\Omega)\hookrightarrow
L^{qr^{\prime}}(\Omega)$ is continuous, the map (\ref{mapq}) is well-defined.

Now, in order to prove the monotonicity of this map, let $0<q_{1}<q_{2}%
<\frac{p_{s}^{\star}}{r^{\prime}}.$ By H\"{o}lder's inequality we have
\begin{align*}%
{\displaystyle\int_{\Omega}}
\left\vert v\right\vert ^{q_{1}}\overline{\omega}\mathrm{d}x  &  =%
{\displaystyle\int_{\Omega}}
\left\vert v\right\vert ^{q_{1}}\overline{\omega}^{\frac{q_{1}}{q_{2}}%
}\overline{\omega}^{\frac{q_{2-}q_{1}}{q_{2}}}\mathrm{d}x\\
&  \leq\left(
{\displaystyle\int_{\Omega}}
\left(  \left\vert v\right\vert ^{q_{1}}\overline{\omega}^{\frac{q_{1}}{q_{2}%
}}\right)  ^{\frac{q_{2}}{q_{1}}}\mathrm{d}x\right)  ^{\frac{q_{1}}{q_{2}}%
}\left(
{\displaystyle\int_{\Omega}}
\left(  \overline{\omega}^{\frac{q_{2-}q_{1}}{q_{2}}}\right)  ^{\frac{q_{2}%
}{q_{2}-q_{1}}}\right)  ^{\frac{q_{2}-q_{1}}{q_{2}}}\mathrm{d}x\\
&  =\left(
{\displaystyle\int_{\Omega}}
\left\vert v\right\vert ^{q_{2}}\overline{\omega}\mathrm{d}x\right)
^{\frac{q_{1}}{q_{2}}}\left\Vert \overline{\omega}\right\Vert _{1}%
^{1-\frac{q_{2}}{q_{1}}}=\left(
{\displaystyle\int_{\Omega}}
\left\vert v\right\vert ^{q_{2}}\overline{\omega}\mathrm{d}x\right)
^{\frac{q_{1}}{q_{2}}},
\end{align*}
implying that%
\[
\left(
{\displaystyle\int_{\Omega}}
\left\vert v\right\vert ^{q_{1}}\overline{\omega}\mathrm{d}x\right)
^{\frac{1}{q_{1}}}\leq\left(
{\displaystyle\int_{\Omega}}
\left\vert v\right\vert ^{q_{2}}\overline{\omega}\mathrm{d}x\right)
^{\frac{1}{q_{2}}}.
\]

\end{proof}

\begin{lemma}
\label{mu<ray}Let $\omega\in L^{r}(\Omega),$ $r>1.$ The map
\[
\lbrack\alpha_{0},1)\ni\alpha\mapsto\lambda_{\alpha}\left\Vert \omega
\right\Vert _{1}^{\frac{p}{1-\alpha}}%
\]
is nondecreasing, for some $\alpha_{0}\in(0,1).$
\end{lemma}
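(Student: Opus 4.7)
The plan is to express $\mu_\alpha := \lambda_\alpha \|\omega\|_1^{p/(1-\alpha)}$ as the best constant in a Rayleigh-type quotient whose numerator is independent of $\alpha$, and then use Lemma \ref{monot} to compare the denominators at different values of $\alpha$.

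Concretely, dividing (\ref{Saw1}) through by $\|\omega\|_1^{-p/(1-\alpha)}$, or directly from (\ref{lambalf}), one obtains
\[
\mu_\alpha = \inf_{v \in W_0^{s,p}(\Omega) \setminus \{0\}} \frac{[v]_{s,p}^p}{\left(\frac{1}{\|\omega\|_1}\int_\Omega |v|^{1-\alpha}\omega\,\mathrm{d}x\right)^{p/(1-\alpha)}}.
\]
The numerator $[v]_{s,p}^p$ does not depend on $\alpha$, so the monotonicity of $\alpha \mapsto \mu_\alpha$ reduces to the \emph{opposite} monotonicity of the denominator as a function of $\alpha$, at every fixed $v$.

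First I would fix an $\alpha_0 \in (0,1)$ close enough to $1$ so that two conditions hold simultaneously: (a) $r \geq r_\alpha$ for every $\alpha \in [\alpha_0,1)$, which guarantees via Theorem \ref{existence} that $u_\alpha$ and hence $\lambda_\alpha$ (and $\mu_\alpha$) are well-defined; and (b) $1 - \alpha_0 < p_s^\star/r'$, so that Lemma \ref{monot} is applicable with exponent $q = 1-\alpha$ for every $\alpha \in [\alpha_0,1)$. Both conditions reduce, when $sp < N$, to the single requirement $\alpha_0 > 1 - p_s^\star/r'$ (recalling $r_\alpha = (p_s^\star/(1-\alpha))'$); the case $sp \geq N$ is handled similarly using $r_\alpha = 1/\alpha$. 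Since $r > 1$ gives $p_s^\star/r' > 0$, such an $\alpha_0 \in (0,1)$ exists.

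Next, for $\alpha_0 \leq \alpha_1 < \alpha_2 < 1$, set $q_1 := 1 - \alpha_2 < 1 - \alpha_1 =: q_2$, both lying in $(0,p_s^\star/r')$. Applying Lemma \ref{monot} to any $v \in W_0^{s,p}(\Omega)\setminus\{0\}$ gives
\[
\left(\frac{1}{\|\omega\|_1}\int_\Omega |v|^{1-\alpha_2}\omega\,\mathrm{d}x\right)^{\frac{1}{1-\alpha_2}} \leq \left(\frac{1}{\|\omega\|_1}\int_\Omega |v|^{1-\alpha_1}\omega\,\mathrm{d}x\right)^{\frac{1}{1-\alpha_1}},
\]
and raising both sides to the $p$-th power inverts no inequality. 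Dividing $[v]_{s,p}^p$ by each side therefore produces
\[
\frac{[v]_{s,p}^p}{\left(\frac{1}{\|\omega\|_1}\int_\Omega |v|^{1-\alpha_1}\omega\,\mathrm{d}x\right)^{p/(1-\alpha_1)}} \leq \frac{[v]_{s,p}^p}{\left(\frac{1}{\|\omega\|_1}\int_\Omega |v|^{1-\alpha_2}\omega\,\mathrm{d}x\right)^{p/(1-\alpha_2)}}.
\]
Taking the infimum over $v$ then yields $\mu_{\alpha_1} \leq \mu_{\alpha_2}$, which is the desired monotonicity.

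The only genuine obstacle is the bookkeeping around $\alpha_0$: one must verify that the chosen $\alpha_0$ simultaneously puts $1-\alpha$ inside the range $(0,p_s^\star/r')$ where Lemma \ref{monot} is valid \emph{and} keeps $r \geq r_\alpha$ so that $\lambda_\alpha$ is defined, and that both constraints are compatible with $\alpha_0 < 1$. The remaining step — comparing Rayleigh quotients pointwise in $v$ and passing to the infimum — is entirely mechanical once Lemma \ref{monot} is in hand.
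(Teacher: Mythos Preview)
Your proof is correct and follows essentially the same approach as the paper: both express $\lambda_\alpha\|\omega\|_1^{p/(1-\alpha)}$ as a Rayleigh quotient and then invoke Lemma~\ref{monot} to compare the denominators at two values of $\alpha$. The only cosmetic difference is that the paper plugs the specific minimizer $u_{\alpha_2}$ into the $\alpha_1$-quotient and then applies Lemma~\ref{monot} to that single function, whereas you compare the quotients pointwise in $v$ and pass to the infimum; your bookkeeping on $\alpha_0$ is also slightly more explicit than the paper's, but the content is the same.
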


\begin{proof}
Since $\lim_{\alpha\rightarrow1^{-}}r_{\alpha}=1^{+},$ there exists
$\alpha_{0}\in(0,1)$ such that $r\geq r_{\alpha}$ whenever $\alpha\in
\lbrack\alpha_{0},1).$ Thus, according to Section \ref{Sec3}, for each
$\alpha\in\lbrack\alpha_{0},1)$ there exists $u_{\alpha}\in W_{0}^{s,p}%
(\Omega)$ such that
\[
\lambda_{\alpha}=\frac{\left[  u_{\alpha}\right]  _{s,p}^{p}}{\left(
{\displaystyle\int_{\Omega}}
\left\vert u_{\alpha}\right\vert ^{1-\alpha}\omega\mathrm{d}x\right)
^{\frac{p}{1-\alpha}}}\leq\frac{\left[  v\right]  _{s,p}^{p}}{\left(
{\displaystyle\int_{\Omega}}
\left\vert v\right\vert ^{1-\alpha}\omega\mathrm{d}x\right)  ^{\frac
{p}{1-\alpha}}},\quad\forall\,v\in W_{0}^{s,p}(\Omega).
\]

Now, let $\alpha_{0}\leq\alpha_{1}<\alpha_{2}<1.$ We have
\begin{align*}
\lambda_{\alpha_{1}}\left\Vert \omega\right\Vert _{1}^{\frac{p}{1-\alpha_{1}%
}}  &  \leq\frac{\left[  u_{\alpha_{2}}\right]  _{s,p}^{p}}{\left(  \frac
{1}{\left\Vert \omega\right\Vert _{1}}%
{\displaystyle\int_{\Omega}}
\left\vert u_{\alpha_{2}}\right\vert ^{1-\alpha_{1}}\omega\mathrm{d}x\right)
^{\frac{p}{1-\alpha_{1}}}}\\
&  \leq\frac{\left[  u_{\alpha_{2}}\right]  _{s,p}^{p}}{\left(  \frac
{1}{\left\Vert \omega\right\Vert _{1}}%
{\displaystyle\int_{\Omega}}
\left\vert u_{\alpha_{2}}\right\vert ^{1-\alpha_{2}}\omega\mathrm{d}x\right)
^{\frac{p}{1-\alpha_{2}}}}=\lambda_{\alpha_{2}}\left\Vert \omega\right\Vert
_{1}^{\frac{p}{1-\alpha}},
\end{align*}
where the second inequality comes from Lemma \ref{monot}.
\end{proof}

\begin{remark}
\label{monota}L'H\^{o}pital's rule and Lemma \ref{monot} show that%
\begin{align*}
0  &  \leq\exp\left(  \frac{p}{\left\Vert \omega\right\Vert _{1}}%
{\displaystyle\int_{\Omega}}
(\log\left\vert v\right\vert )\omega\mathrm{d}x\right) \\
&  =\lim_{q\rightarrow0^{+}}\left(  \frac{1}{\left\Vert \omega\right\Vert
_{1}}%
{\displaystyle\int_{\Omega}}
\left\vert v\right\vert ^{q}\omega\mathrm{d}x\right)  ^{\frac{p}{q}}\\
&  \leq\left(  \frac{1}{\left\Vert \omega\right\Vert _{1}}%
{\displaystyle\int_{\Omega}}
\left\vert v\right\vert ^{1-\alpha}\omega\mathrm{d}x\right)  ^{\frac
{p}{1-\alpha}}<\infty,\quad\forall\,v\in W_{0}^{s,p}(\Omega)\quad
\mathrm{and}\quad\alpha\in\lbrack\alpha_{0},1).
\end{align*}

\end{remark}

As a consequence of Lemma \ref{mu<ray}, we can define%
\[
\mu:=\lim_{\alpha\rightarrow1^{-}}\lambda_{\alpha}\left\Vert \omega\right\Vert
_{1}^{\frac{p}{1-\alpha}}=\sup_{t\in\lbrack\alpha_{0},1)}\lambda_{t}\left\Vert
\omega\right\Vert _{1}^{\frac{p}{1-t}}%
\]
and also conclude that%
\[
\mu\geq\lambda_{\alpha_{0}}\left\Vert \omega\right\Vert _{1}^{\frac
{p}{1-\alpha_{0}}}>0.
\]

However, we cannot guarantee, at least in principle, that $\mu<\infty.$
According to (\ref{Saw1}), one way of achieving this is to show the existence
of a function $\varphi\in W_{0}^{s,p}(\Omega)$ satisfying%
\begin{equation}
\lim_{q\rightarrow0^{+}}\left(  \frac{1}{\left\Vert \omega\right\Vert _{1}}%
{\displaystyle\int_{\Omega}}
\left\vert \varphi\right\vert ^{q}\omega\mathrm{d}x\right)  ^{\frac{1}{q}}>0,
\label{lim+}%
\end{equation}
or, equivalently,
\begin{equation}
-\infty<%
{\displaystyle\int_{\Omega}}
(\log\left\vert \varphi\right\vert )\omega\mathrm{d}x. \label{log+}%
\end{equation}

Apparently, the task of finding such a function $\varphi\in W_{0}^{s,p}%
(\Omega)$ is not simple when a general nonnegative function $\omega\in
L^{r}(\Omega)$ is considered. Note, for example, that if $v\in W_{0}%
^{s,p}(\Omega)\cap L^{\infty}(\Omega)$ and vanishes over a part $\Omega
^{\prime}$ of the support of $\omega$ in such a way that $0<%
{\displaystyle\int_{\Omega\setminus\Omega^{\prime}}}
\omega\mathrm{d}x<\left\Vert \omega\right\Vert _{1},$ then
\begin{align*}
\left(  \frac{1}{\left\Vert \omega\right\Vert _{1}}%
{\displaystyle\int_{\Omega}}
\left\vert v\right\vert ^{q}\omega\mathrm{d}x\right)  ^{\frac{1}{q}}  &
=\left(  \frac{1}{\left\Vert \omega\right\Vert _{1}}%
{\displaystyle\int_{\Omega\setminus\Omega^{\prime}}}
\left\vert v\right\vert ^{q}\omega\mathrm{d}x\right)  ^{\frac{1}{q}}\\
&  \leq\left\Vert v\right\Vert _{\infty}\left(  \frac{1}{\left\Vert
\omega\right\Vert _{1}}%
{\displaystyle\int_{\Omega\setminus\Omega^{\prime}}}
\omega\mathrm{d}x\right)  ^{\frac{1}{q}}\rightarrow0,\quad\mathrm{as}\quad
q\rightarrow0^{+}.
\end{align*}
This is what happens when $\omega\equiv1,$ but in this case it is possible to
built (see \cite{EP}) a suitable function $\varphi$ that vanishes only on
$\partial\Omega$ and satisfies%
\[
\limsup_{q\rightarrow0^{+}}\left(  \left\vert \Omega\right\vert ^{-1}%
{\displaystyle\int_{\Omega}}
\left\vert \varphi\right\vert ^{q}\mathrm{d}x\right)  ^{\frac{1}{q}}>0.
\]

A simpler situation where (\ref{lim+}) holds is when $\omega$ is compactly
supported in $\Omega.$ In fact, if there exists a subdomain $\Omega^{\prime
}\subset\Omega$ such that $\omega(x)=0$ for almost every $x\in\Omega
\setminus\Omega^{\prime},$ then we can take a smooth function $\varphi\in
W_{0}^{s,p}(\Omega)$ such that $\inf_{\Omega^{\prime}}\left\vert
\varphi\right\vert =m>0$ in order to obtain%
\[
\left(  \frac{1}{\left\Vert \omega\right\Vert _{1}}%
{\displaystyle\int_{\Omega}}
\left\vert \varphi\right\vert ^{q}\omega\mathrm{d}x\right)  ^{\frac{1}{q}%
}=\left(  \frac{1}{\left\Vert \omega\right\Vert _{1}}%
{\displaystyle\int_{\Omega^{\prime}}}
\left\vert \varphi\right\vert ^{q}\omega\mathrm{d}x\right)  ^{\frac{1}{q}}\geq
m.
\]

Our feeling is that, in fact, $\mu<\infty$ whenever $\omega\in L^{r}(\Omega),$
with $r>1.$ But we were not able to prove this generically, even knowing that
\[
\mu=\infty\Longrightarrow%
{\displaystyle\int_{\Omega}}
(\log\left\vert \varphi\right\vert )\omega\mathrm{d}x=-\infty,\quad
\forall\,v\in W_{0}^{s,p}(\Omega),
\]
as (\ref{Saw1}) and Remark \ref{monota} show. Since this issue of generically
determining the finiteness of $\mu$ goes beyond of our purposes in this paper,
we will assume from now on that $\mu<\infty.$

\begin{theorem}
Let $\omega\in L^{r}(\Omega),$ $r>1,$ and suppose that $\mu<\infty.$ We have
\begin{equation}
\mu\exp\left(  \frac{p}{\left\Vert \omega\right\Vert _{1}}%
{\displaystyle\int_{\Omega}}
(\log\left\vert v\right\vert )\omega\mathrm{d}x\right)  \leq\left[  v\right]
_{s,p}^{p},\quad\forall\,v\in W_{0}^{s,p}(\Omega). \label{Sobalfa1}%
\end{equation}

\end{theorem}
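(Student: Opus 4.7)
The plan is to obtain (\ref{Sobalfa1}) by passing to the limit $\alpha\rightarrow1^{-}$ in the finite-$\alpha$ Sobolev inequality (\ref{Saw1}), which, for every $\alpha\in\lbrack\alpha_{0},1)$ (the range provided by Lemma \ref{mu<ray}) and every $v\in W_{0}^{s,p}(\Omega)$, reads
\[
\left(\lambda_{\alpha}\left\Vert \omega\right\Vert _{1}^{\frac{p}{1-\alpha}}\right)\left(\frac{1}{\left\Vert \omega\right\Vert _{1}}\int_{\Omega}\left\vert v\right\vert ^{1-\alpha}\omega\mathrm{d}x\right)^{\frac{p}{1-\alpha}}\leq\left[v\right]_{s,p}^{p}.
\]
The left-hand side factors into a piece depending only on $\alpha$ (and $\omega$) and a piece that resembles a $q$-mean of $|v|$ with $q=1-\alpha$. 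This decomposition is exactly what makes the limit computable.

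First I would fix an arbitrary $v\in W_{0}^{s,p}(\Omega)$ and dispose of trivial cases: if $v\equiv 0$, or more generally if $\int_{\Omega}(\log\left\vert v\right\vert )\omega\mathrm{d}x=-\infty$ (which is consistent with $|v|$ vanishing on a subset of the support of $\omega$ having positive measure), then the right-hand side of (\ref{Sobalfa1}) is $0$ and the inequality is immediate. So assume $\int_{\Omega}(\log\left\vert v\right\vert )\omega\mathrm{d}x$ is finite (the case $+\infty$ is also harmless by the same argument applied to the limit computation below).

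Next I would handle the two factors separately. For the first factor, the definition (\ref{mialfa}) of $\mu$ together with Lemma \ref{mu<ray} gives
\[
\lim_{\alpha\rightarrow1^{-}}\lambda_{\alpha}\left\Vert \omega\right\Vert _{1}^{\frac{p}{1-\alpha}}=\mu,
\]
and this limit is a supremum, hence genuinely attained as a nondecreasing limit. For the second factor, set $q=1-\alpha$, so that $q\rightarrow 0^{+}$ as $\alpha\rightarrow 1^{-}$. Remark \ref{monota} (an application of L'H\^{o}pital's rule in the form $\lim_{q\rightarrow 0^{+}}\log\left(\frac{1}{\|\omega\|_{1}}\int_{\Omega}|v|^{q}\omega\mathrm{d}x\right)^{p/q}=\frac{p}{\|\omega\|_{1}}\int_{\Omega}(\log|v|)\omega\mathrm{d}x$) yields
\[
\lim_{\alpha\rightarrow1^{-}}\left(\frac{1}{\left\Vert \omega\right\Vert _{1}}\int_{\Omega}\left\vert v\right\vert ^{1-\alpha}\omega\mathrm{d}x\right)^{\frac{p}{1-\alpha}}=\exp\left(\frac{p}{\left\Vert \omega\right\Vert _{1}}\int_{\Omega}(\log\left\vert v\right\vert )\omega\mathrm{d}x\right).
\]

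Finally I would combine the two limits: since both factors on the left-hand side of (\ref{Saw1}) are nonnegative and converge to finite limits (the finiteness of $\mu$ is the standing hypothesis), the product converges to the product of the limits, and the inequality is preserved in the limit. This gives (\ref{Sobalfa1}). There is no real obstacle in this argument; the substantial work has been deferred to Lemma \ref{monot}, Lemma \ref{mu<ray} and Remark \ref{monota}, and the only delicate point is the a priori unclear finiteness of $\mu$, which the statement assumes outright.
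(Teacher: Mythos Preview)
Your proposal is correct and follows exactly the approach of the paper, which simply says that the inequality follows immediately from (\ref{Saw1}) by letting $\alpha\rightarrow1^{-}$. You have merely spelled out the two limit computations (using Lemma \ref{mu<ray} and Remark \ref{monota}) that the paper leaves implicit.
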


\begin{proof}
The proof follows immediately from (\ref{Saw1}), by making $\alpha
\rightarrow1^{-}.$
\end{proof}

We are proceeding in the direction of proving that (\ref{Sobalfa1}) becomes an
equality for some $V\in W_{0}^{s,p}(\Omega).$ More precisely, we will show
that $\mu$ is the minimum of the functional $v\mapsto\left[  v\right]
_{s,p}^{p}$ on the set
\[
\mathcal{M}:=\left\{  v\in W_{0}^{s,p}(\Omega):%
{\displaystyle\int_{\Omega}}
(\log\left\vert v\right\vert )\omega\mathrm{d}x=0\right\}  .
\]
Note that $\mathcal{M}\not =\emptyset$ if, and only if, there exists
$\varphi\in W_{0}^{s,p}(\Omega)$ satisfying (\ref{log+}). Moreover, if
$\mathcal{M}\not =\emptyset$ then $\mu<\infty.$ The reciprocal of this will
follow immediately from our next theorem.

In the following results $V_{\alpha}$ denotes the function defined by%
\begin{equation}
V_{\alpha}:=\left\Vert \omega\right\Vert _{1}^{\frac{1}{1-\alpha}}U_{\alpha}
\label{Valfa}%
\end{equation}
where $U_{\alpha}$ is given by (\ref{Ualfa}).

It is simple to check that%
\begin{equation}
\left(  \frac{1}{\left\Vert \omega\right\Vert _{1}}%
{\displaystyle\int_{\Omega}}
\left\vert V_{\alpha}\right\vert ^{1-\alpha}\omega\mathrm{d}x\right)
^{\frac{p}{1-\alpha}}=1 \label{intValfa}%
\end{equation}
and that
\begin{equation}
\left\{
\begin{array}
[c]{lcl}%
(-\Delta_{p})^{s}\,V_{\alpha}=\left\Vert \omega\right\Vert _{1}^{\frac
{p}{1-\alpha}}\dfrac{\lambda_{\alpha}}{\left\Vert \omega\right\Vert _{1}%
}\dfrac{\omega}{(V_{\alpha})^{\alpha}} & \mathrm{in} & \Omega\\
V_{\alpha}=0 & \mathrm{on} & \mathbb{R}^{N}\setminus\Omega.
\end{array}
\right.  \label{eqValfa}%
\end{equation}

\begin{theorem}
Let $\omega\in L^{r}(\Omega),$ $r>1,$ and suppose that $\mu<\infty.$ Then
$V_{\alpha}$ converges in $W_{0}^{s,p}(\Omega)$ to a nonnegative function
$V\in\mathcal{M},$ which minimizes the functional $v\mapsto\left[  v\right]
_{s,p}^{p}$ on $\mathcal{M}.$ Moreover, the only minimizers of this functional
on $\mathcal{M}$ are $-V$ and $V.$ Consequently, the equality in
(\ref{Sobalfa1}) holds for some $v\in W_{0}^{s,p}(\Omega)$ if, and only if,
$v=kV$ for some constant $k.$
\end{theorem}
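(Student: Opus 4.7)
The plan is to extract a weak limit $V$ of $\{V_{\alpha}\}_{\alpha\in[\alpha_{0},1)}$ as $\alpha\to 1^{-}$, show $V\in\mathcal{M}$, show $[V]_{s,p}^{p}=\mu$, and then deduce strong convergence plus uniqueness up to sign. By (\ref{lambalf}) and (\ref{Valfa}) we have $[V_{\alpha}]_{s,p}^{p}=\lambda_{\alpha}\|\omega\|_{1}^{p/(1-\alpha)}$, which by Lemma \ref{mu<ray} is nondecreasing in $\alpha$ and bounded above by $\mu<\infty$. Hence $\{V_{\alpha}\}$ is bounded in $W_{0}^{s,p}(\Omega)$, and I select $\alpha_{k}\to 1^{-}$ so that $V_{\alpha_{k}}\rightharpoonup V$ weakly in $W_{0}^{s,p}(\Omega)$, strongly in $L^{q}(\Omega)$ for every $q<p_{s}^{\star}$, and pointwise a.e.; since each $V_{\alpha}\geq 0$, one also has $V\geq 0$.

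Next I would establish $V\in\mathcal{M}$ through a two-sided estimate. Combining (\ref{intValfa}) with Lemma \ref{monot}, for each fixed $\beta\in(0,p_{s}^{\star}/r^{\prime})$ and every $\alpha$ with $1-\alpha\leq\beta$,
\[
\left(\frac{1}{\|\omega\|_{1}}\int_{\Omega}V_{\alpha}^{\beta}\omega\,\mathrm{d}x\right)^{1/\beta}\geq\left(\frac{1}{\|\omega\|_{1}}\int_{\Omega}V_{\alpha}^{1-\alpha}\omega\,\mathrm{d}x\right)^{1/(1-\alpha)}=1.
\]
Passing to the limit along $\alpha_{k}\to 1^{-}$ via strong $L^{\beta r^{\prime}}(\Omega)$ convergence yields $\int_{\Omega}V^{\beta}\omega\,\mathrm{d}x\geq\|\omega\|_{1}$. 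Letting $\beta\to 0^{+}$ and invoking dominated convergence first forces $V>0$ $\omega$-a.e., and then L'H\^{o}pital applied to $\beta^{-1}\log\left(\|\omega\|_{1}^{-1}\int V^{\beta}\omega\,\mathrm{d}x\right)$ gives $\|\omega\|_{1}^{-1}\int_{\Omega}(\log V)\omega\,\mathrm{d}x\geq 0$. For the reverse inequality I apply (\ref{Saw1}) with $v=V$ to obtain $\lambda_{\alpha}\|\omega\|_{1}^{p/(1-\alpha)}\left(\|\omega\|_{1}^{-1}\int V^{1-\alpha}\omega\,\mathrm{d}x\right)^{p/(1-\alpha)}\leq[V]_{s,p}^{p}$, then pass $\alpha\to 1^{-}$ and combine with weak lower semicontinuity $[V]_{s,p}^{p}\leq\liminf[V_{\alpha_{k}}]_{s,p}^{p}=\mu$. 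Finiteness of $[V]_{s,p}^{p}$ rules out a $+\infty$ limit and produces $\mu\exp\!\left(p\|\omega\|_{1}^{-1}\int(\log V)\omega\,\mathrm{d}x\right)\leq\mu$, i.e.\ $\int(\log V)\omega\,\mathrm{d}x\leq 0$. Hence $V\in\mathcal{M}$ and $[V]_{s,p}^{p}\leq\mu$; evaluating (\ref{Sobalfa1}) at $V\in\mathcal{M}$ supplies the matching lower bound, so $[V]_{s,p}^{p}=\mu$. The norm convergence $[V_{\alpha_{k}}]_{s,p}\to[V]_{s,p}$ together with weak convergence in the uniformly convex space $W_{0}^{s,p}(\Omega)$ upgrades to strong convergence, and this extends to the full family once uniqueness of the nonnegative minimizer is proved.

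For uniqueness I would adapt the argument of Proposition \ref{uniqmin}. Remark \ref{Sign} forbids sign changes for any minimizer $\Phi\in\mathcal{M}$, so WLOG $\Phi\geq 0$. The AM-GM inequality $\log\frac{\Phi+V}{2}\geq\frac{1}{2}(\log\Phi+\log V)$ $\omega$-a.e.\ implies $\int\log\frac{\Phi+V}{2}\omega\,\mathrm{d}x\geq 0$, so the constant $h:=\exp(-\|\omega\|_{1}^{-1}\int\log\frac{\Phi+V}{2}\omega\,\mathrm{d}x)$ satisfies $h\leq 1$ and $h\frac{\Phi+V}{2}\in\mathcal{M}$. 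The chain
\[
\mu\leq\left[h\,\tfrac{\Phi+V}{2}\right]_{s,p}^{p}\leq h^{p}\left(\tfrac{[\Phi]_{s,p}+[V]_{s,p}}{2}\right)^{p}=h^{p}\mu\leq\mu
\]
forces $h=1$ and the midpoint identity $\left[\tfrac{\Phi+V}{2}\right]_{s,p}=\tfrac{[\Phi]_{s,p}+[V]_{s,p}}{2}$; strict convexity of $v\mapsto[v]_{s,p}$ then yields $\Phi=V$, and accounting for sign gives $\Phi\in\{V,-V\}$. For the equality case of (\ref{Sobalfa1}), given $v\neq 0$ saturating the inequality, the scaled function $kv$ with $k:=\exp(-\|\omega\|_{1}^{-1}\int\log|v|\,\omega\,\mathrm{d}x)$ lies in $\mathcal{M}$ and satisfies $[kv]_{s,p}^{p}=\mu$, so $kv=\pm V$ and $v$ is a scalar multiple of $V$.

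The main obstacle is the two-sided limit passage in $\int V^{1-\alpha}\omega\,\mathrm{d}x$ as $\alpha\to 1^{-}$: monotonicity of the weighted $L^{q}$-norms delivers $\int\log V\,\omega\geq 0$ directly, but the opposite estimate must be extracted by pitting (\ref{Saw1}) against weak lower semicontinuity, and one must simultaneously rule out the degenerate possibilities that the geometric-mean limit becomes $0$ or $+\infty$; controlling this dichotomy is the delicate technical point.
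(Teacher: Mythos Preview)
Your argument is correct and tracks the paper's proof closely for the main body: the identification $[V_{\alpha}]_{s,p}^{p}=\lambda_{\alpha}\|\omega\|_{1}^{p/(1-\alpha)}$, the weak-limit extraction, the two-sided sandwich showing $V\in\mathcal{M}$ by combining Lemma~\ref{monot} (with the normalization (\ref{intValfa})) and (\ref{Saw1}) against weak lower semicontinuity, and the upgrade from weak to strong convergence via uniform convexity are all exactly as in the paper.

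The one genuine difference is in the uniqueness step. The paper reuses the superadditivity inequality from the proof of Proposition~\ref{uniqmin}, namely
\[
\left(\tfrac{1}{\|\omega\|_{1}}\int_{\Omega}\Big(\tfrac{V}{2}+\tfrac{\Phi}{2}\Big)^{1-\alpha}\omega\,\mathrm{d}x\right)^{1/(1-\alpha)}
\geq
\left(\tfrac{1}{\|\omega\|_{1}}\int_{\Omega}\Big(\tfrac{V}{2}\Big)^{1-\alpha}\omega\,\mathrm{d}x\right)^{1/(1-\alpha)}
+
\left(\tfrac{1}{\|\omega\|_{1}}\int_{\Omega}\Big(\tfrac{\Phi}{2}\Big)^{1-\alpha}\omega\,\mathrm{d}x\right)^{1/(1-\alpha)},
\]
and then lets $\alpha\to 1^{-}$ to deduce $\exp\big(\tfrac{p}{\|\omega\|_{1}}\int\log(\tfrac{V+\Phi}{2})\omega\big)\geq 1$. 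You reach the same conclusion in one line from the concavity of $\log$ (AM--GM): $\log\tfrac{V+\Phi}{2}\geq\tfrac{1}{2}(\log V+\log\Phi)$ $\omega$-a.e., which after integration against $\omega$ gives $\int\log\tfrac{V+\Phi}{2}\,\omega\,\mathrm{d}x\geq 0$ directly. Your route is shorter and avoids an additional limit passage; the paper's route has the virtue of formally mirroring the $0<\alpha<1$ argument. Both collapse to the same midpoint identity for $[\,\cdot\,]_{s,p}$ and finish by strict convexity.
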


\begin{proof}
Multiplying the equation in (\ref{eqValfa}) by $V_{\alpha}$ and integrating
over $\Omega$ we obtain%
\begin{align*}
\left[  V_{\alpha}\right]  _{s,p}^{p}  &  =\left\Vert \omega\right\Vert
_{1}^{\frac{p}{1-\alpha}}\dfrac{\lambda_{\alpha}}{\left\Vert \omega\right\Vert
_{1}}%
{\displaystyle\int_{\Omega}}
(V_{\alpha})^{1-\alpha}\omega\mathrm{d}x\\
&  =\left\Vert \omega\right\Vert _{1}^{\frac{p}{1-\alpha}}\lambda_{\alpha}%
{\displaystyle\int_{\Omega}}
(U_{\alpha})^{1-\alpha}\omega\mathrm{d}x=\lambda_{\alpha}\left\Vert
\omega\right\Vert _{1}^{\frac{p}{1-\alpha}}.
\end{align*}

Therefore,
\[
\lim_{\alpha\rightarrow1^{-}}\left[  V_{\alpha}\right]  _{s,p}^{p}=\mu.
\]

This fact implies that there exist $\alpha_{n}\rightarrow1^{-}$ and a function
$V\in W_{0}^{s,p}(\Omega)$ such that: $V_{\alpha_{n}}\rightharpoonup V$
(weakly) in $W_{0}^{s,p}(\Omega),$ $V_{\alpha_{n}}\rightarrow V$ in
$L^{1}(\Omega)$ and $V_{\alpha_{n}}(x)\rightarrow V(x)$ for almost every
$x\in\Omega.$ We remark that $V\geq0$ in $\Omega$ since $V_{\alpha_{n}}>0$ in
$\Omega.$

The weak convergence implies that%
\begin{equation}
\left[  V\right]  _{s,p}^{p}\leq\lim_{n\rightarrow\infty}\left[  V_{\alpha
_{n}}\right]  _{s,p}^{p}=\mu. \label{U<mu}%
\end{equation}

Note from (\ref{Sobalfa1}) that $\mu\leq\left[  v\right]  _{s,p}^{p}$ for
every $v\in\mathcal{M}.$ Thus, by taking (\ref{U<mu}) into account, in order
to conclude that $V$ minimizes the functional $v\mapsto\left[  v\right]
_{s,p}^{p}$ on $\mathcal{M}$ we need only to prove that $V\in\mathcal{M}.$

According to (\ref{Saw1}), we have%
\[
\lambda_{t}\left\Vert \omega\right\Vert _{1}^{\frac{p}{1-t}}\left(  \frac
{1}{\left\Vert \omega\right\Vert _{1}}%
{\displaystyle\int_{\Omega}}
\left\vert V\right\vert ^{1-t}\omega\mathrm{d}x\right)  ^{\frac{p}{1-t}}%
\leq\left[  V\right]  _{s,p}^{p},\quad\forall\,t\in\lbrack\alpha_{0},1),
\]
so that
\begin{equation}
\mu\lim_{t\rightarrow1^{-}}\left(  \frac{1}{\left\Vert \omega\right\Vert _{1}}%
{\displaystyle\int_{\Omega}}
\left\vert V\right\vert ^{1-t}\omega\mathrm{d}x\right)  ^{\frac{p}{1-t}}%
\leq\left[  V\right]  _{s,p}^{p}. \label{mu1<U}%
\end{equation}
Hence, in view of (\ref{U<mu}), we can conclude that
\begin{equation}
\lim_{t\rightarrow1^{-}}\left(  \frac{1}{\left\Vert \omega\right\Vert _{1}}%
{\displaystyle\int_{\Omega}}
\left\vert V\right\vert ^{1-t}\omega\mathrm{d}x\right)  ^{\frac{p}{1-t}}\leq1.
\label{U<1}%
\end{equation}

Now, let us fix an arbitrary $t\in(\alpha_{0},1).$ Then, for all $n$ large
enough (such that $a_{0}<t<\alpha_{n}$) we have%
\[
1=\left(  \frac{1}{\left\Vert \omega\right\Vert _{1}}%
{\displaystyle\int_{\Omega}}
\left\vert V_{\alpha_{n}}\right\vert ^{1-\alpha_{n}}\omega\mathrm{d}x\right)
^{\frac{p}{1-\alpha_{n}}}\leq\left(  \frac{1}{\left\Vert \omega\right\Vert
_{1}}%
{\displaystyle\int_{\Omega}}
\left\vert V_{\alpha_{n}}\right\vert ^{1-t}\omega\mathrm{d}x\right)
^{\frac{p}{1-t}},
\]
according to (\ref{intValfa}) and Lemma \ref{monot}. It is straightforward to
check that the convergence $V_{\alpha_{n}}\rightarrow V$ in $L^{1}(\Omega)$
implies that%
\[
\lim_{n\rightarrow\infty}\left(  \frac{1}{\left\Vert \omega\right\Vert _{1}}%
{\displaystyle\int_{\Omega}}
\left\vert V_{\alpha_{n}}\right\vert ^{1-t}\omega\mathrm{d}x\right)
^{\frac{p}{1-t}}=\left(  \frac{1}{\left\Vert \omega\right\Vert _{1}}%
{\displaystyle\int_{\Omega}}
\left\vert V\right\vert ^{1-t}\omega\mathrm{d}x\right)  ^{\frac{p}{1-t}}.
\]

Therefore,
\[
1\leq\left(  \frac{1}{\left\Vert \omega\right\Vert _{1}}%
{\displaystyle\int_{\Omega}}
\left\vert V\right\vert ^{1-t}\omega\mathrm{d}x\right)  ^{\frac{p}{1-t}}.
\]
This fact and (\ref{U<1}) show that%
\[
\lim_{t\rightarrow1^{-}}\left(  \frac{1}{\left\Vert \omega\right\Vert _{1}}%
{\displaystyle\int_{\Omega}}
\left\vert V\right\vert ^{1-t}\omega\mathrm{d}x\right)  ^{\frac{1}{1-t}}=1.
\]

Since
\[
\lim_{t\rightarrow1^{-}}\left(  \frac{1}{\left\Vert \omega\right\Vert _{1}}%
{\displaystyle\int_{\Omega}}
\left\vert V\right\vert ^{1-t}\omega\mathrm{d}x\right)  ^{\frac{1}{1-t}}%
=\exp\left(  \frac{1}{\left\Vert \omega\right\Vert _{1}}%
{\displaystyle\int_{\Omega}}
(\log\left\vert V\right\vert )\omega\mathrm{d}x\right)
\]
we conclude that
\[%
{\displaystyle\int_{\Omega}}
(\log\left\vert V\right\vert )\omega\mathrm{d}x=0,
\]
that is, $V\in\mathcal{M}.$ Thus, we have
\begin{equation}
\left[  V\right]  _{s,p}^{p}=\lim_{n\rightarrow\infty}\left[  V_{\alpha_{n}%
}\right]  _{s,p}^{p}=\mu=\min_{v\in\mathcal{M}}\left[  v\right]  _{s,p}^{p}.
\label{Teo1b}%
\end{equation}

The (strong) convergence $V_{\alpha_{n}}\rightarrow V$ in $W_{0}^{s,p}%
(\Omega)$ then stems from the first equality in (\ref{Teo1b}).

Now, let $\Phi$ be a function that attains the minimum $\mu$ on $\mathcal{M}.$
We emphasize that $\Phi$ does not change sign in $\Omega.$ Otherwise, since
$\left\vert \Phi\right\vert $ also belongs to $\mathcal{M},$ we would arrive
at the contradiction%
\[
\left[  \left\vert \Phi\right\vert \right]  _{s,p}^{p}<\left[  \Phi\right]
_{s,p}^{p}=\mu\leq\left[  \left\vert \Phi\right\vert \right]  _{s,p}^{p}.
\]
Thus, without loss of generality, we will assume that $\Phi\geq0.$

Repeating the arguments developed in the proof of Proposition \ref{uniqmin} we
obtain
\begin{align*}
\lim_{\alpha\rightarrow1^{-}}\left(  \frac{1}{\left\Vert \omega\right\Vert
_{1}}\int_{\Omega}\left(  \frac{V}{2}+\frac{\Phi}{2}\right)  ^{1-\alpha}%
\omega\mathrm{d}x\right)  ^{\frac{1}{1-\alpha}}  &  \geq\lim_{\alpha
\rightarrow1^{-}}\left(  \frac{1}{\left\Vert \omega\right\Vert _{1}}%
\int_{\Omega}\left(  \frac{V}{2}\right)  ^{1-\alpha}\omega\mathrm{d}x\right)
^{\frac{1}{1-\alpha}}\\
&  +\lim_{\alpha\rightarrow1^{-}}\left(  \frac{1}{\left\Vert \omega\right\Vert
_{1}}\int_{\Omega}\left(  \frac{\Phi}{2}\right)  ^{1-\alpha}\omega
\mathrm{d}x\right)  ^{\frac{1}{1-\alpha}}\\
&  =\frac{1}{2}+\frac{1}{2}=1,
\end{align*}
so that
\[
\exp\left(  \frac{p}{\left\Vert \omega\right\Vert _{1}}%
{\displaystyle\int_{\Omega}}
\left(  \log(\frac{V}{2}+\frac{\Phi}{2})\right)  \omega\mathrm{d}x\right)
\geq1.
\]

Therefore,
\begin{align*}
\mu &  \leq\mu\exp\left(  \frac{p}{\left\Vert \omega\right\Vert _{1}}%
{\displaystyle\int_{\Omega}}
\left(  \log(\frac{V}{2}+\frac{\Phi}{2})\right)  \omega\mathrm{d}x\right) \\
&  \leq\left[  \frac{V}{2}+\frac{\Phi}{2}\right]  _{s,p}^{p}\\
&  \leq\left(  \left[  \frac{V}{2}\right]  _{s,p}+\left[  \frac{\Phi}%
{2}\right]  _{s,p}\right)  ^{p}=\left(  \frac{\mu^{\frac{1}{p}}}{2}+\frac
{\mu^{\frac{1}{p}}}{2}\right)  ^{p}=\mu
\end{align*}
from what follows that
\[
\mu^{\frac{1}{p}}=\left[  \frac{V}{2}+\frac{\Phi}{2}\right]  _{s,p}=\left(
\frac{\left[  V\right]  _{s,p}}{2}+\frac{\left[  \Phi\right]  _{s,p}}%
{2}\right)  .
\]

The strict convexity of the Gagliardo semi-norm then implies that $V=\Phi.$

Since $V$ is the unique nonnegative function that attains the minimum $\mu$ on
$\mathcal{M}$ we can conclude that the convergence $V_{\alpha_{n}}\rightarrow
V$ in $W_{0}^{s,p}(\Omega)$ does not depend on the subsequence $\alpha_{n}$
going to $1^{-}.$
\end{proof}

We would like to pass to the limit in (\ref{eqValfa}), as $\alpha
\rightarrow1^{-},$ in order to conclude that the minimizer $V$ is the solution
of the singular problem
\begin{equation}
\left\{
\begin{array}
[c]{lcl}%
(-\Delta_{p})^{s}\,u=\dfrac{\mu}{\left\Vert \omega\right\Vert _{1}}%
\dfrac{\omega}{u} & \mathrm{in} & \Omega\\
u=0 & \mathrm{on} & \mathbb{R}^{N}\setminus\Omega.
\end{array}
\right.  \label{eqV}%
\end{equation}
The convergence $V_{\alpha}\rightarrow V$ in $W_{0}^{s,p}(\Omega)$ shows that
\begin{equation}
\lim_{\alpha\rightarrow1^{-}}\left\langle (-\Delta_{p})^{s}\,V_{\alpha
},\varphi\right\rangle =\left\langle (-\Delta_{p})^{s}\,V,\varphi\right\rangle
,\quad\forall\,\varphi\in W_{0}^{s,p}(\Omega). \label{eqVa}%
\end{equation}
However, due the singular nature of the equation in (\ref{eqValfa}), this
convergence is not enough to directly obtain
\begin{equation}
\lim_{\alpha\rightarrow1^{-}}%
{\displaystyle\int_{\Omega}}
\dfrac{\omega\varphi}{(V_{\alpha})^{\alpha}}\mathrm{d}x=%
{\displaystyle\int_{\Omega}}
\dfrac{\omega\varphi}{V}\mathrm{d}x,\quad\forall\,\varphi\in C_{c}^{\infty
}(\Omega). \label{eqVb}%
\end{equation}
For this, we will assume that $r>\max\left\{  1,\frac{N}{sp}\right\}  $ in
order to use the boundedness results of Subsection \ref{secbounds}.

In the sequel $\psi\in W_{0}^{s,p}(\Omega)\cap C^{\beta_{s}}(\overline{\Omega
})$ is the function satisfying (\ref{Psi}).

\begin{lemma}
Let $\omega\in L^{r}(\Omega),$ with $r>\max\left\{  1,\frac{N}{sp}\right\}  ,$
and suppose that $\mu<\infty.$ There exist positive constants $m$ and $M$ such
that
\begin{equation}
0<m\psi\leq V_{\alpha}\leq\left\Vert V_{\alpha}\right\Vert _{\infty}\leq
M\quad\mathrm{in}\ \Omega,\quad\forall\ \alpha\in\lbrack\alpha_{0},1).
\label{lowUa}%
\end{equation}

\end{lemma}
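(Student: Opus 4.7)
The plan is to bound $V_\alpha$ uniformly from above by applying Theorem \ref{global} to equation (\ref{eqValfa}), and then to derive the lower bound by comparison with $\psi$, exactly in the spirit of Proposition \ref{lowbound}.

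First I would rewrite equation (\ref{eqValfa}) as $(-\Delta_p)^s V_\alpha = \widetilde{\omega}_\alpha / V_\alpha^\alpha$ with $\widetilde{\omega}_\alpha := (\mu_\alpha/\|\omega\|_1)\,\omega$, where $\mu_\alpha := \lambda_\alpha \|\omega\|_1^{p/(1-\alpha)}$. Because $\alpha \mapsto \mu_\alpha$ is nondecreasing on $[\alpha_0,1)$ with supremum $\mu < \infty$ (Lemma \ref{mu<ray} and the standing hypothesis), we obtain
\[
\|\widetilde{\omega}_\alpha\|_r \leq \frac{\mu}{\|\omega\|_1}\,\|\omega\|_r, \qquad \forall\,\alpha\in[\alpha_0,1).
\]
Since $r>N/(sp)$, Remark \ref{Global1} applies with $\theta = p_s^\star$ (or the analogous choice when $sp\geq N$). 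Inspecting estimate (\ref{lbound}), the exponents $1/(p-1+\alpha)$, $b(p-1)/((b-1)(p-1+\alpha))$ and $(b-1)(p-1)/(\theta(p-1+\alpha))$ as well as the prefactor $C_\alpha = (\alpha/(p-1))^{(p-1)/(p-1+\alpha)}(1+(p-1)/\alpha)$ all remain bounded as $\alpha$ ranges over the compact-in-$(0,1)$ set $[\alpha_0,1)$. Therefore (\ref{lbound}) yields a constant $M>0$, independent of $\alpha$, such that $\|V_\alpha\|_\infty \leq M$.

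For the lower bound, the pointwise upper bound $V_\alpha\leq M$ implies $V_\alpha^\alpha \leq \max\{1,M\}=:K$, and so, using $\omega\geq \omega_1$ and $\mu_\alpha \geq \mu_{\alpha_0}>0$,
\[
(-\Delta_p)^s V_\alpha \;=\; \frac{\mu_\alpha}{\|\omega\|_1}\,\frac{\omega}{V_\alpha^\alpha} \;\geq\; \frac{\mu_{\alpha_0}}{K\,\|\omega\|_1}\,\omega_1 \;=\; \bigl\langle (-\Delta_p)^s(m\psi),\,\cdot\,\bigr\rangle \big/ \text{(test)},
\]
provided we choose $m>0$ so that $m^{p-1} = \mu_{\alpha_0}/(K\|\omega\|_1)$, since $(-\Delta_p)^s(m\psi) = m^{p-1}\omega_1$. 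Testing the inequality $(-\Delta_p)^s V_\alpha - (-\Delta_p)^s(m\psi) \geq 0$ against $\varphi = (m\psi - V_\alpha)_+ \in W_0^{s,p}(\Omega)$ and invoking Lemma \ref{lemma+} (as in Proposition \ref{unmonot}) forces $(m\psi - V_\alpha)_+ \equiv 0$, that is, $V_\alpha \geq m\psi$. Since $\psi > 0$ in $\Omega$ and $m$ is independent of $\alpha$, this completes (\ref{lowUa}).

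The only delicate point is to confirm that the $L^\infty$ bound furnished by Theorem \ref{global} is genuinely uniform in $\alpha\in[\alpha_0,1)$; this is what forces the restriction $r>\max\{1,N/(sp)\}$, because this range allows us to choose $\theta$ (either $p_s^\star$ or any value $>pr'$) independently of $\alpha$, and then all $\alpha$-dependent exponents and the prefactor $C_\alpha$ are continuous functions of $\alpha$ on the closed interval $[\alpha_0,1]$, hence bounded. Once $M$ is obtained, the comparison step with the scaled torsion-like function $m\psi$ is routine and mirrors Proposition \ref{lowbound}.
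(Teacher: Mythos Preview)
Your proposal is correct and follows essentially the same approach as the paper: apply Theorem \ref{global} to (\ref{eqValfa}) with the rescaled weight $\widetilde{\omega}_\alpha=(\mu_\alpha/\|\omega\|_1)\omega$, use $\mu_\alpha\le\mu<\infty$ to obtain a uniform $L^\infty$ bound $M$, and then compare with $m\psi$ via the weak comparison principle (the paper invokes it directly, while you spell it out through Lemma \ref{lemma+} as in Proposition \ref{unmonot}). Your observation that the $\alpha$-dependent exponents and $C_\alpha$ extend continuously to $[\alpha_0,1]$ is a slightly cleaner way of securing uniformity than the paper's limsup-plus-``increase $\alpha_0$'' step, but the substance is the same.
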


\begin{proof}
Since $V_{\alpha}$ satisfies (\ref{eqValfa}), we can apply Theorem
\ref{global} (with $\omega$ replaced by $\left\Vert \omega\right\Vert
_{1}^{\frac{p}{1-\alpha}-1}\lambda_{\alpha}\omega$) to conclude that%
\begin{align*}
\left\Vert V_{\alpha}\right\Vert _{\infty}  &  \leq C_{\alpha}\left(
\frac{\left\Vert \omega\right\Vert _{1}^{\frac{p}{1-\alpha}-1}\lambda_{\alpha
}\left\Vert \omega\right\Vert _{r}}{S_{\theta}}\right)  ^{\frac{1}{p-1+\alpha
}}2^{\frac{b(p-1)}{(b-1)(p-1+\alpha)}}\left\vert \Omega\right\vert
^{\frac{(b-1)(p-1)}{\theta(p-1+\alpha)}}\\
&  \leq C_{\alpha}\left(  \frac{\mu\left\Vert \omega\right\Vert _{r}%
}{S_{\theta}\left\Vert \omega\right\Vert _{1}}\right)  ^{\frac{1}{p-1+\alpha}%
}2^{\frac{b(p-1)}{(b-1)(p-1+\alpha)}}\left\vert \Omega\right\vert
^{\frac{(b-1)(p-1)}{\theta(p-1+\alpha)}}%
\end{align*}
where $pr^{\prime}<\theta\leq p_{s}^{\star}$ (the equality only in the case
$sp<N$) and%
\[
C_{\alpha}:=\left(  \frac{\alpha}{p-1}\right)  ^{\frac{p-1}{p-1+\alpha}%
}\left(  1+\frac{p-1}{\alpha}\right)  \quad\mathrm{and}\quad b:=(\frac{\theta
}{r^{\prime}}-1)\frac{1}{p-1}>1.
\]

Therefore,%
\[
\limsup_{\alpha\rightarrow1^{-}}\left\Vert V_{\alpha}\right\Vert _{\infty}\leq
p\left(  \frac{1}{p-1}\right)  ^{\frac{p-1}{p}}\left(  \frac{\mu\left\Vert
\omega\right\Vert _{r}}{S_{\theta}\left\Vert \omega\right\Vert _{1}}\right)
^{\frac{1}{p}}2^{\frac{b(p-1)}{(b-1)p}}\left\vert \Omega\right\vert
^{\frac{(b-1)(p-1)}{\theta p}}.
\]
It follows that, by increasing $\alpha_{0}$ if necessary, there exists $M$
such that $\left\Vert V_{\alpha}\right\Vert _{\infty}\leq M$ for all
$\alpha\in\lbrack\alpha_{0},1).$

Thus,
\begin{align*}
\left(  -\Delta_{p}\right)  ^{s}V_{\alpha}  &  =\left\Vert \omega\right\Vert
_{1}^{\frac{p}{1-\alpha}-1}\lambda_{\alpha}\dfrac{\omega}{(V_{\alpha}%
)^{\alpha}}\\
&  \geq\left\Vert \omega\right\Vert _{1}^{\frac{p}{1-\alpha}-1}\lambda
_{\alpha}\dfrac{\omega_{1}}{(V_{\alpha})^{\alpha}}\\
&  \geq\left\Vert \omega\right\Vert _{1}^{\frac{p}{1-\alpha_{0}}-1}%
\lambda_{\alpha_{0}}\dfrac{\omega_{1}}{M^{\alpha}}\geq m^{p-1}\omega
_{1}=\left(  -\Delta_{p}\right)  ^{s}\left[  m\psi\right]  ,
\end{align*}
where $\omega_{1}=\min\left\{  \omega,1\right\}  $ and
\[
m:=\min_{\alpha_{0}\leq\alpha\leq1}\left(  \left\Vert \omega\right\Vert
_{1}^{\frac{p}{1-\alpha_{0}}-1}\lambda_{\alpha_{0}}M^{-\alpha}\right)
^{\frac{1}{p-1}}>0.
\]
Therefore, by the weak comparison principle we get the estimate%
\[
V_{\alpha}\geq m\psi>0,
\]
valid in $\Omega$, for every $\alpha\in\lbrack\alpha_{0},1).$
\end{proof}

\begin{proposition}
Let $\omega\in L^{r}(\Omega),$ with $r>\max\left\{  1,\frac{N}{sp}\right\}  ,$
and suppose that $\mu<\infty.$ The minimizer $V$ is the weak solution of the
singular problem (\ref{eqV}).
\end{proposition}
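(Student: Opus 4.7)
The plan is to pass to the limit in the weak formulation
\[
\langle (-\Delta_p)^s V_\alpha,\varphi\rangle = \lambda_\alpha\,\|\omega\|_1^{\frac{p}{1-\alpha}-1}\int_\Omega \frac{\omega\varphi}{(V_\alpha)^\alpha}\,\mathrm{d}x,\quad \varphi\in C_c^\infty(\Omega),
\]
as $\alpha\to 1^-$. The left-hand side is already handled by (\ref{eqVa}), and the scalar factor on the right tends to $\mu/\|\omega\|_1$ by the definition of $\mu$. So the whole game is to justify (\ref{eqVb}) and to verify the non-degeneracy condition \textit{(i)} of Definition \ref{weaksol} for $V$.

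First I would record that the previous lemma gives $m\psi\le V_\alpha\le M$ in $\Omega$, uniformly in $\alpha\in[\alpha_0,1)$. Passing this bound to the a.e.\ limit along any sequence $\alpha_n\to 1^-$ for which $V_{\alpha_n}\to V$ a.e.\ (extractable thanks to the strong convergence $V_{\alpha_n}\to V$ in $W_0^{s,p}(\Omega)$), I obtain
\[
0< m\psi \le V \le M \quad\text{a.e.\ in }\Omega.
\]
Since $\psi$ is continuous and strictly positive in $\Omega$, this immediately yields condition \textit{(i)}: on every $\Omega'\Subset\Omega$, $V\ge m\min_{\overline{\Omega'}}\psi>0$.

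Next I would justify the pointwise convergence and domination needed for (\ref{eqVb}). Fix $\varphi\in C_c^\infty(\Omega)$ and set $K=\operatorname{supp}\varphi$; then $\psi_0:=\min_K\psi>0$. For each $\alpha\in[\alpha_0,1)$ and $x\in K$,
\[
\left|\frac{\omega(x)\varphi(x)}{(V_\alpha(x))^\alpha}\right|\le \frac{\|\varphi\|_\infty\,\omega(x)}{(m\psi_0)^\alpha}\le \frac{\|\varphi\|_\infty\,\omega(x)}{\min\{m\psi_0,(m\psi_0)^{\alpha_0}\}},
\]
which is integrable because $\omega\in L^r(\Omega)\subset L^1(\Omega)$. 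Along the chosen subsequence, $V_{\alpha_n}\to V$ a.e., and since $V_{\alpha_n}$ stays in the compact interval $[m\psi_0,M]\subset(0,\infty)$ on $K$, the continuity of $(t,\alpha)\mapsto t^\alpha$ gives $(V_{\alpha_n})^{\alpha_n}\to V$ a.e.\ on $K$. Dominated convergence then yields
\[
\lim_{n\to\infty}\int_\Omega \frac{\omega\varphi}{(V_{\alpha_n})^{\alpha_n}}\,\mathrm{d}x=\int_\Omega \frac{\omega\varphi}{V}\,\mathrm{d}x.
\]

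Finally, combining this with (\ref{eqVa}) and the fact that $\lambda_{\alpha_n}\|\omega\|_1^{\frac{p}{1-\alpha_n}-1}\to \mu/\|\omega\|_1$, I pass to the limit in the weak formulation to conclude
\[
\langle (-\Delta_p)^s V,\varphi\rangle = \frac{\mu}{\|\omega\|_1}\int_\Omega \frac{\omega\varphi}{V}\,\mathrm{d}x,\quad \forall\,\varphi\in C_c^\infty(\Omega).
\]
This, together with the lower bound $V\ge m\psi$ already observed, shows that $V$ is a weak solution of (\ref{eqV}) in the sense of Definition \ref{weaksol}. Since the limit is independent of the subsequence (uniqueness of the minimizer of $[\,\cdot\,]_{s,p}^p$ on $\mathcal{M}$), the full net $V_\alpha$ produces the same conclusion. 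The only subtle point is the uniform lower bound $V_\alpha\ge m\psi$, which is precisely what the preceding lemma secures; without it, the integrand $\omega\varphi/(V_\alpha)^\alpha$ could blow up and dominated convergence would fail.
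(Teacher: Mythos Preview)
Your proposal is correct and follows essentially the same approach as the paper: use the uniform lower bound $V_\alpha\ge m\psi$ from the preceding lemma to obtain a uniform positive lower bound on compact subsets, then apply dominated convergence to justify (\ref{eqVb}) and combine with (\ref{eqVa}). In fact you supply more detail than the paper's own proof, which simply asserts that the uniform lower bound ``guarantees that (\ref{eqVb}) holds'' and leaves the verification of condition \textit{(i)} for $V$ implicit.
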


\begin{proof}
We recall that $\psi$ is positive in $\Omega$ and belongs to $C^{\beta_{s}%
}(\overline{\Omega})$ for some $0<\beta_{s}<1.$ Hence, according to the
previous lemma, $V_{\alpha}$ is bounded from below by a positive constant
(that is uniform with respect to $\alpha$) in each proper subdomain
$\Omega^{\prime}\subset\Omega.$ This property guarantees that (\ref{eqVb})
holds. Since we have already obtained (\ref{eqVa}), the conclusion follows.
\end{proof}

\section{Acknowledgments}

This work was supported by CNPq/Brazil (483970/2013-1 and 306590/2014-0) and
Fapemig/Brazil (APQ-03372-16).

\end{document}